\newtheorem*{lemma}{Lemma}
\newtheorem*{prop}{Proposition}
\newtheorem*{thm}{Theorem}
\newtheorem*{cor}{Corollary}
\newcommand{\twoheaddownarrow}{\overset{\sim}{\twoheaddownarrow}}
\newcommand{\gr}{\operatorname{gr}}
\newcommand{\nc}{\newcommand}
\nc{\Ker}{\operatorname{Ker}} \nc{\rker}{\operatorname{rKer}}
\nc{\im}{\operatorname{Im}}
\nc{\stab}{\operatorname {Stab}}
\nc{\ann}{\operatorname {Ann}}
\nc{\Id}{\operatorname {Id}}
\nc{\Prim}{\operatorname {Prim}}
\nc{\Real}{\operatorname {Re}}
\nc{\Ext}{\operatorname {Ext}}
\nc{\rad}{\operatorname {rad}}
\nc{\rk}{\operatorname {rank}}
\nc{\Aut}{\operatorname {Aut}}
\nc{\supp}{\operatorname {supp}}
\nc{\codim}{\operatorname {codim}}
\newcommand*{\encircled}[1]{\relax\ifmmode\mathpalette\@encircled@math{#1}\else\@encircled{#1}\fi}
\newcommand*{\@encircled@math}[2]{\@encircled{$\m@th#1#2$}}
\newcommand*{\@encircled}[1]{%
 \tikz[baseline,anchor=base]{\node[draw,circle,outer sep=0pt,inner sep=.2ex] {#1};}}
\begin{document}

\title [Canonical Component]{The Canonical Component of the nilfibre for Parabolic adjoint action in type $A$}
\author [Yasmine Fittouhi and Anthony Joseph]{Yasmine Fittouhi and Anthony Joseph}
\date{\today}
\maketitle
\vspace{-.9cm}\begin{center}

Department of Mathematics\\
The University of Haifa\\
Haifa, 3498838, Israel\\
fittouhiyasmine@gmail.com
\end{center}\

\

\

\vspace{-.9cm}\begin{center}
Donald Frey Professional Chair\\
Department of Mathematics\\
The Weizmann Institute of Science\\
Rehovot, 7610001, Israel\\
anthony.joseph@weizmann.ac.il
\end{center}\

\

\date{\today}
\maketitle

Key Words: Invariants, Parabolic adjoint action, the nilfibre.

AMS Classification: 17B35

 \

\textbf{Abstract}.

\

This work is a continuation of [Y. Fittouhi and A. Joseph, Parabolic adjoint action, Weierstrass Sections and components of the nilfibre in type $A$].

Let $P$ be a parabolic subgroup of an irreducible simple algebraic group $G$. Let $P'$ be the derived group of $P$ and let $\mathfrak m$ be the Lie algebra of the nilradical of $P$.

 A theorem of Richardson implies that the subalgebra $\mathbb C[\mathfrak m]^{P'}$, spanned by the $P$ semi-invariants in $\mathbb C[\mathfrak m]$, is polynomial.

A linear subvariety $e+V$ of $\mathfrak m$ is is called a Weierstrass section for the action of $P'$ on $\mathfrak m$, if the restriction map induces an isomorphism of $\mathbb C[\mathfrak m]^{P'}$ onto $\mathbb C[e+V]$. Thus a Weierstrass section can exist only if the latter is polynomial, but even when this holds its existence is far from assured.

Let $\mathscr N$ be zero locus of the augmentation $\mathbb C[\mathfrak m]^{P'}_+$. It is called the nilfibre relative to this action.

Suppose $G=SL(n,\mathbb C)$, and $P$ a parabolic subgroup.

In [Y. Fittouhi and A. Joseph, loc. cit.], the existence of a Weierstrass section $e+V$ in $\mathfrak m$ was established by a general combinatorial construction. Notably $e \in \mathscr N$ and is a sum of root vectors with linearly independent roots. The Weierstraass section $e+V$ looks very different for different choices of parabolics but nevertheless has a uniform construction and exists in all cases. It is called the ``canonical Weierstrass section''.

Through [Y. Fittouhi and A. Joseph, loc. cit. Prop. 6.9.2, Cor. 6.9.8], there is always a ``canonical'' component $\mathscr N^e$ of $\mathscr N$ containing $e$.





 It was announced in [Y. Fittouhi and A. Joseph, loc. cit., Prop. 6.10.4] that one may augment $e$ to an element $e_{VS}$ by adjoining root vectors. Then the linear span $E_{VS}$ of these root vectors lies in $\mathscr N^e$ and its closure is just $\mathscr N^e$.


Yet this same result shows that $\mathscr N^e$ need \textit{not} admit a dense $P$ orbit [Y. Fittouhi and A. Joseph, loc. cit., Lemma 6.10.7].

For the above [Y. Fittouhi and A. Joseph, loc. cit., Theorem 6.10.3] was needed. However thhis theorem was only verified in the special case needed to obtain the example showing that $\mathscr N^e$ may fail to admit a dense $P$ orbit. Here a general proof is given (Theorem \ref {4.4.5}).

Finally a map from compositions to the set of distinct non-negative integers is defined. Its image is shown to determine the canonical Weierstrass section.

One may anticipate that the remaining components of $\mathscr N$ can be similarly described. However this is a long story and will be postponed for a subsequent paper.

These results should form a template for general type.

\section{Introduction}\label{1}

The base field is the field of complex numbers $\mathbb C$. For every positive integer $n$, we set $[1,n]=\{1,2,,\ldots\,n\}$.

\subsection{}\label{1.1}

Let $G$ be a simple, connected algebraic group over $\mathbb C$ and $\mathfrak g$ its Lie algebra. Let $\mathfrak h$ be a Cartan subalgebra of $\mathfrak g$. Let $H$ be the unique connected closed subgroup of $G$ with Lie algebra $\mathfrak h$.

 Let $\Delta \subset \mathfrak h^*$ be the set of roots for the pair $(\mathfrak g, \mathfrak h)$. Let $\Delta^+$ be a choice of positive roots and $\pi$ the resulting set of simple roots. Let $\mathfrak n$ (resp. $\mathfrak n^-$) be the subalgebra of $\mathfrak g$ spanned by the positive (resp. negative) root vectors. Then $\mathfrak b:=\mathfrak h+\mathfrak n$ is a Borel subalgebra of $\mathfrak g$. It will remain fixed throughout.

The Weyl group of $G$ is the subgroup $W$ of $\Aut \mathfrak h$ generated by the reflections $s_\alpha: \alpha \in \Delta$, where $s_\alpha h =h -\alpha(h) \alpha^\vee$, for all $h \in \mathfrak h$, with $\alpha^\vee$ the coroot to $\alpha$.

\subsection{}\label{1.2}

A (standard) parabolic subalgebra of $\mathfrak g$ is one containing $\mathfrak b$.

For any subset $\pi' \subset \pi$, let $\mathfrak r_{\pi'}$ be the reductive Lie subalgebra of $\mathfrak g$ generated over $\mathfrak h$ by the root vectors $x_\alpha,x_{-\alpha};\alpha \in \pi'$. It is the Levi factor of a unique standard parabolic subalgebra $\mathfrak p_{\pi'}$. It is complemented by the nilradical $\mathfrak m_{\pi'}$ of $\mathfrak p_{\pi'}$ and indeed $\mathfrak m_{\pi'}$ is the span of the root subspaces $x_\beta: \beta \in \Delta^+ \setminus \mathbb N\pi' \cap \Delta^+$. All standard parabolics so obtain.

In what follows the $\pi'$ subscript will often be omitted.

Let $\mathfrak p'$ denote the derived algebra of $\mathfrak p$. Let $P$ (resp. $P'$) denote the unique closed subgroup of $G$ with Lie algebra $\mathfrak p$ (resp. $\mathfrak p'$).

\subsection{}\label{1.3}

As a consequence of a theorem of Richardson, $\mathbb C[\mathfrak m]^{P'}$ is a polynomial algebra - \cite [2.2.3]{FJ1}.

The nilfibre $\mathscr N$ for the action of $P$ on $\mathfrak m$ is defined to be the zero locus of the augmentation $\mathbb C[\mathfrak m]^{P'}_+$.

Given $e \in \mathfrak m$ and $V$ a linear subspace of $\mathfrak m$, one calls $e+V$ a linear subvariety of $\mathfrak m$.

A Weierstrass section for the action of $P$ on $\mathfrak m$ is a linear subvariety $e+V \subset \mathfrak m$ such that restriction $\varphi$ of functions induces an isomorphism of $\mathbb C[\mathfrak m]^{P'}$ onto $\mathbb C[e+V]$.

A Weierstrass section realizes rather explicitly the polynomiality of $\mathbb C[\mathfrak m]^{P'}$. It also has the significant geometric interpretation that every $P'$ orbit meeting $e+V$, meets the latter at just one point, so providing a canonical representative in that orbit. Polynomiality is not enough to ensure the existence of a Weierstrass section \cite [1.2.7]{FJ1}.

The main result in \cite [5.4.12] {FJ2} was the existence of a Weierstrass section $e+V$ for the action of a parabolic on its nilradical in type $A$. The proof was rather combinatorial but nevertheless essentially canonical. We call $e+V$ the canonical Weierstrass section.


A main point is that $e$ belongs to a canonical component $\mathscr N^e$ of $\mathscr N$ which can be determined. We described $\mathscr N^e$ in some detail in \cite [Cor. 6.9.8]{FJ2}, which we reproduce here in \ref {2.7}. Here we start by extending this description proving in particular \cite [Prop. 6.10.4]{FJ2}, given here as Prop. \ref {4.5.3}, which we previously only verified in some needed special cases.

\subsection{}\label{1.4}

In order to place our results in a form that may more easily generalize to all $G$ simple, let us describe how the canonical Weierstrass section $e+V$, noted in \ref {1.3}, determines the canonical component $\mathscr N^e$. Then one can ask whether from a component of $\mathscr N$ one may construct a Weierstrass section.

Call $e \in \mathscr N$ regular if it generates a dense $P$ orbit in a component of $\mathscr N$ thereby determining this component.

Let $\mathscr N_{reg}$ denote the subset of all regular elements in $\mathscr N$.

Of course a Weierstrass section is by no means unique. However when $e \in \mathscr N_{reg}$, then any other Weierstrass section with this property is determined up to a choice of component and up to conjugation.

Thus an ``abstract'' description of all Weierstrass sections should start by describing the $P$ orbits in $\mathscr N_{reg}$. Then given $e \in \mathscr N_{reg}$ one should choose $V$ as a vector space complement to $\mathfrak p.e$ in $\mathfrak m$. This is how the ``Kostant Section'' is obtained when $\mathfrak p =\mathfrak g$. In that case
$\mathscr N_{reg}$ is irreducible.

A first difficulty is that in the construction of a Weierstrass section $e+V$ of \cite [5.4.12]{FJ2} the resulting $e$ may be ``too small'' to be regular.

This is partly overcome by augmenting $e$ to an element $e_{VS}$ by adjoining \cite [6.10.4]{FJ2} canonically chosen co-ordinate vectors to $e$, which we call $VS$ elements following the work of Victoria Sevostyanova \cite {Se}.

A second (serious) difficulty is that $\mathscr N_{reg}$ may be empty. In \cite [Lemma 6.10.7]{FJ2} it was shown that a component of $\mathscr N$ may contain no regular orbit. 

Yet in our construction, $e_{VS}$ is a sum of weight vectors. Call $e_{VS}$ \textbf{quasi-regular} if the corresponding direct sum $E_{VS}$ of weight subspaces has the property that $P.E_{VS}$ is dense in some necessarily unique component $\mathscr N^e$ of $\mathscr N$. Then $\mathscr N^e$ is determined by $e_{VS}$ and so by $e$. It is called the canonical component.

In Section \ref {3}, we give a proof of the quasi-regularity $e_{VS}$. \textit{It is by no means easy}.

The main point in the distinction between regular and quasi-regular elements is that $H.e_{VS}$ need not be dense in $E_{VS}$ - \cite [6.10.7]{FJ2}.

Our aim is to show that every component of $\mathscr N$ admits a quasi-regular element. However it is by no means obvious that the existence of a quasi-regular element should lead to a Weierstrass section, nor that such a construction would extend to all types. For the moment it remains a signpost along the road to the solution of an immensely difficult problem.

\subsection{}\label{1.5}

An orbital variety closure is the closure of a set of the form $B.(\mathfrak n \cap w(\mathfrak n)): w \in W$. It is called a hypersurface orbital variety if it is also a hypersurface in some nilradical of a parabolic subalgebra. As noted in \cite [Lemma 2.3.4]{FJ1} the generators of $\mathbb C[\mathfrak m]^{P'}$ are put into bijection with the hypersurface orbital varieties in $\mathfrak m$, by taking zero loci. This holds for all $\mathfrak g$ simple, but only in type $A$ can one easily determine the hypersurface orbital varieties \cite [Prop. 2.17]{JM}. They have been described in classical types by Perelman \cite{P} but the details are far from transparent and so difficult to translate to a general case-free form. Here as briefly discussed in \cite [3.4]{FJ1}, the real difficulty lies in the subtle nature of the Springer construction of simple Weyl group modules associated to non-trivial representations of the component group of a given nilpotent orbit and as a consequence we do not know how to determine such modules arising from hypersurface orbital varieties (in type $E$) - see \cite [Remark 3.4.9]{FJ1}.

\subsection{}\label{1.6}

More generally let $\mathfrak u$ be a subalgebra of $\mathfrak n$. The $B$ saturation set defined by $\mathfrak u$ is just the closure $\overline{B.\mathfrak u}$ of $B.\mathfrak u$. Remarkably the canonical component $\mathscr C$ takes this form \cite [Cor. 6.9.8]{FJ2} and moreover \cite [6.9.7]{FJ2} has dimension $d:=\dim \mathfrak m -g$. \textit{The proof is far from easy}.

Our eventual aim is to show that every irreducible component of the nilfibre is a $B$ saturation set defined by a subalgebra of $\mathfrak n$ and to determine this subalgebra (which need not be unique). In general this $B$ saturation set is not an orbital variety closure. As noted in \cite [6.9.9]{FJ2}, this holds if and only if $\dim G.\mathfrak u= 2 \dim B. \mathfrak u$. Examples $3,4$ of \cite [6.10.9]{FJ2} show that this can hold and this can fail.

%

\section {Type $A$ - The Zero Locus of the Benlolo-Sanderson Invariant.}\label {2}

\subsection{}\label{2.1}

From now on we assume that $G$ is simple of type $A$, that is isomorphic to $SL(n,\mathbb C)$ for some integer $n>1$. In this case the Weyl group $W$ is just the symmetric group $S_n$ on $n$ letters. Again the component group pf any orbit is trivial and so Springer theory recovers the classical fact that there is a bijection between nilpotent orbits and simple Weyl group modules (in type $A$).

Let $\textbf{M}_n$, or simply $\textbf{M}$, denote the set of $n \times n$ matrices over $\mathbb C$ and in this we write we write $x_{i,j}:i,j \in [1,n]$, for the standard matrix units. It is sometimes called the $(i,j)^{th}$ co-ordinate function, or co-ordinate vector, which assigns to $\textbf{M}$ its value at its $(i,j)^{th}$ entry.

Then the Lie algebra $\mathfrak {sl}(n,\mathbb C)$ of $SL(n,\mathbb C)$ is just the subspace $\textbf{M}_n$ of traceless matrices over $\mathbb C$ given a Lie bracket through the commutator.

Let $\alpha_i:i \in [1,n-1]$ be the simple roots in the Bourbaki notation \cite {Bo} and set $\alpha_{i,j}:=\alpha_i+\cdots+\alpha_{j-1}$. If $i<j$, then $x_{i,j}$ is the root vector $x_{\alpha_{i,j}}\in \mathfrak n$.

\subsection{}\label{2.2}

Let $\pi'$ be a subset of $\pi$. Let $W_{\pi'}$ be the subgroup of $W$ generated by the simple reflections $s_\alpha:\alpha \in \pi'$ and $w_{\pi'}$ it unique longest element.

Recall that the Levi factor $\mathfrak r_{\pi'}$ of $\mathfrak p_{\pi'}$ is given by a set of $c_i \times c_i$ blocks $\{B_i\}_{i=1}^k$ on the diagonal of $\textbf{M}_n$, where the $c_i-1$ are the cardinalities of the connected components of $\pi'$. One has $\sum_{i=1}^kc_i= n$. It is complemented in $\mathfrak p_{\pi'}$ by the nilradical $\mathfrak m_{\pi'}$ of $\mathfrak p_{\pi'}$, lying in $\textbf{M}_n$ above this set of blocks.

Often $\pi'$ will be viewed as being fixed and the $\pi'$ subscript omitted.

 Let $\textbf{C}_i$ denote the rectangular block in $\mathfrak m$ lying \textit{strictly above} $B_i$. We call it the $i^{th}$ column block. Its width is $c_i$ and its height $\sum_{j < i}c_j$. In this presentation, $\mathfrak m=\oplus_{i=2}^k \textbf{C}_i$.

 \subsection{}\label{2.3}

We regard $\mathfrak p$ as being defined by the composition $(c_1,c_2,\ldots,c_k)$, hence by a diagram $\mathscr D_\mathfrak m$ formed by a set of columns $\{C_i\}_{i=1}^k$ labelled from left to right with ht $C_i=c_i$.

Let $\{R_j\}_{j=1}^\ell$ be the rows of $\mathscr D_\mathfrak m$ labelled with increasing positive integers from top to bottom. For all $i \in \mathbb N^+$, let $R^i$ denote the union of the rows $\{R_j\}_{j=1}^i$.

All columns are deemed to start from the top so meeting row $R_1$.

Let $\mathscr T_\mathfrak m$, be the tableau obtained from $\mathscr D_\mathfrak m$ in which the integers $1,2,\ldots,n$ are inserted sequentially, first down the columns and then on going from left to right. In this we often drop the $\mathfrak m$ subscript, since $\mathfrak m$ will usually be fixed and to avoid cumbersome notation.

This presentation has the following property. Let $b,b'$ be boxes of $\mathscr T$ with $b'$ strictly to the right of $b$. Let $i$ (resp. $j$) be the entry of $b$ (resp. $b'$). Then $x_{i,j} \in \mathfrak m$ and these elements form a basis of $\mathfrak m$. Let $\ell_{b,b'}$ be the line joining $b,b'$. In the above labelling it is sometimes written as $\ell_{i,j}$ and said to join $i,j$. It defines the co-ordinate vector $x_{i,j}$.


 $\mathscr T$ need not be a standard tableau because column heights need not be increasing, in other words there may be gaps in the rows of $\mathscr T$. Yet $\mathscr T$ becomes a standard tableau when we shift boxes in a given row from right to left to close up gaps. Indeed the content of each row does not change and in the new tableau entries still increase down the columns. Then through the Robinson-Schensted correspondence and a result of Spaltenstein, $\mathscr T$ corresponds to an orbital variety for $\mathfrak {sl}(n)$ and every orbital variety is so obtained exactly once - see \cite [2.1]{JM}, for example.

One may recover $\mathscr D$ from $\mathscr T$ by forgetting the entries.

\subsection{}\label{2.4}

As in \cite [4.1.2]{FJ1} we say that two columns of height $s$ of $\mathscr T_\mathscr m$ are neighbouring if there are no columns of height $s$ strictly between them. To any pair of neighbouring columns $C_v,C_{v'}$, there is a Benlolo-Sanderson invariant $M(v,v')$ \cite [4.1.3]{FJ1} which is a polynomial in the co-ordinate functions $x_{i,j}$, where $\{i,j|i<j\}$ are the entries of the columns between $C_v,C_{v'}$. We will recall the description of this invariant in \ref {2.6}.

The zero set of $M(v,v')$ is a hypersurface orbital variety $\mathscr V(v,v')$ in $\mathfrak m_{\pi'}$. It is described in \cite [2.6]{FJ2} as a $B$ saturation set, specifically as $\overline{B.\mathfrak u_{v,v'}}$,
with $\mathfrak u_{v,v'}=\mathfrak n \cap w_{v.v'}(\mathfrak n)$ for a suitable choice of $w_{v,v'} \in W$.

\subsection{}\label{2.5}

In the above $w_{v,v'}$ is obtained by modifying \cite [2.4]{FJ2} the tableau $\mathscr T_\mathfrak m$. Here the details will be briefly recalled below. In this the columns outside $C_v,C_{v'}$, play no role.

Being neighbouring columns of height $s$, means that there are no columns of height $s$ strictly between $C_v,C_{v'}$. Let $m$ be the entry in the box $C_{v'} \cap R_s$.

Then as in \cite [2.6]{FJ2}, \textit{ignoring columns of height $<s$} shift $m$ into the rightmost column of height $\geq s$ and shift simultaneously the parts of the columns strictly below $R_s$, one position to the left.

This defines the modified tableau $\mathscr T_\mathfrak m(v,v')$. For example, if every column strictly between $C_v,C_{v'}$ has height strictly less than $s$, then $\mathscr T_\mathfrak m(v,v')$ is obtained by removing $m$ from $C_{v'}$ and placing it in the box $R_{s+1} \cap C_v$.

Now define $w_{v,v'}$ in word form by reading the entries of $\mathscr T_\mathfrak m(n)$ from bottom to top and then from left to right.

One always has $\mathfrak u_{v,v'} \subset \mathfrak m$ via \cite [Lemma 2.5]{FJ2}.

If there are no columns of height $>s$ between $C_v,C_{v'}$, then in the matrix presentation of $\mathfrak m$ we obtain $\mathfrak u_{v,v'}$ by removing the last column from $\mathfrak m$, when $C_{v'}$ is the last column of $\mathscr D$ and if not in a suitable truncation of $\mathfrak m$.

The general description of $\mathfrak u_{v,v'}$ is more complicated. It is determined through the root vectors of $\mathfrak n$ that do not appear in $\mathfrak u$ - called the excluded root vectors. They are given in \cite [2.7]{FJ2}.

\subsection{}\label{2.6}

Let $C_v, C_{v'}$ are neighbouring columns of height $s$. If they are not the first and last columns of $\mathscr D$, then we may truncate $\textbf{M}$ to make this so. (See also the reduction in \cite [3.2]{FJ1}.)

Let $M_s(v,v')$ be the $n-s \times n-s$ minor in the bottom left hand corner of the truncated \textbf{M}.

Let $P^-$ denote the opposed parabolic to $P$. Then $M_s(v,v')$ is a $P^-$ semi-invariant for co-adjoint action.

We consider $M_s(v,v')$ as a polynomial function on $\mathfrak m+\Id$, through the Killing form. In particular it is not homogeneous and it is \textit{not} necessarily a $P'$ invariant for adjoint action. Remarkably its leading term $\gr M_s(v,v')$ is a $P'$ invariant \cite {BS} for adjoint action, which we call the Benlolo-Sanderson invariant\footnote{A suggestion as to how this should generalize for all types is given in \cite [3.2, 3.5, 3.6]{FJ1}.}. As noted in \cite {BS}, its degree $d(\gr M_s(v,v'))$ is given by
$$d(\gr M_s(v,v'))= \sum_{i=v}^{v'-1} \min (c_i,s).$$

 The following is proved in \cite [2.6]{FJ2} by an argument similar to that of \cite [Prop. 3.8]{JM}.

\begin {prop} The closure of $B.(\mathfrak n \cap w_{v,v'}(\mathfrak n))$ is the zero variety of $\gr M_s(v,v')$ in $\mathfrak m$.
\end {prop}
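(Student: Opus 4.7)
The plan is to establish the set-theoretic equality $\overline{B \cdot \mathfrak{u}_{v,v'}} = Z_\mathfrak{m}$, where $\mathfrak{u}_{v,v'} := \mathfrak{n} \cap w_{v,v'}(\mathfrak{n})$ and $Z_\mathfrak{m}$ denotes the zero locus of $\gr M_s(v,v')$ in $\mathfrak{m}$, by a two-step inclusion argument: first $\subseteq$ from vanishing of the invariant on $\mathfrak{u}_{v,v'}$ together with $B$-stability of the zero set, then $\supseteq$ from irreducibility of both sides and a dimension count.

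For the inclusion $\subseteq$, I would expand the minor $M_s(v,v')$ as an alternating sum of products $\pm\,x_{i_1,j_1}\cdots x_{i_{n-s},j_{n-s}}$ over partial permutations, and note that its leading term $\gr M_s(v,v')$ selects a specific subset of these monomials. The combinatorial rule defining $w_{v,v'}$ in \ref{2.5} --- shifting the entry $m$ from $C_{v'}\cap R_s$ into the rightmost column of height $\geq s$ and translating the portions of the columns strictly below $R_s$ one step to the left --- was engineered precisely so that the list of excluded root vectors described in \cite[2.7]{FJ2} meets the support of every such monomial. The verification is a direct monomial-by-monomial matching of the two combinatorial descriptions. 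Since $\gr M_s(v,v')$ is a $P$-semi-invariant, its zero set is $P$-stable and hence $B$-stable, so $\overline{B \cdot \mathfrak{u}_{v,v'}} \subseteq Z_\mathfrak{m}$ once the pointwise vanishing on $\mathfrak{u}_{v,v'}$ is in hand.

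For the reverse inclusion, I would invoke the irreducibility of the polynomial $\gr M_s(v,v')$ (a standard feature of Benlolo-Sanderson invariants arising from the irreducibility of the defining minor), so that $Z_\mathfrak{m}$ is an irreducible hypersurface of dimension $\dim \mathfrak{m}-1$. The closure $\overline{B \cdot \mathfrak{u}_{v,v'}}$ is likewise irreducible, as the image-closure of the irreducible variety $B \times \mathfrak{u}_{v,v'}$, so the reverse inclusion reduces to the dimension bound $\dim \overline{B \cdot \mathfrak{u}_{v,v'}}\geq \dim \mathfrak{m}-1$. This I would obtain by first computing $\dim \mathfrak{u}_{v,v'}= |\Delta^+|-\ell(w_{v,v'})$ from the standard formula for $\dim(\mathfrak{n}\cap w(\mathfrak{n}))$, and then examining the generic $B$-stabilizer of a point of $\mathfrak{u}_{v,v'}$, using the $H$-weight grading on $\mathfrak{m}$, to isolate enough transverse $B$-orbit directions to fill out codimension one in $\mathfrak{m}$. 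The degree formula $d(\gr M_s(v,v'))=\sum_{i=v}^{v'-1}\min(c_i,s)$ furnishes a useful consistency check between the length of $w_{v,v'}$ and the codimension being targeted.

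The principal obstacle is the combinatorial bookkeeping in the vanishing step, especially when the columns strictly between $C_v$ and $C_{v'}$ include some of height $>s$: there the modified tableau $\mathscr{T}_\mathfrak{m}(v,v')$ is most intricate and the description of the excluded roots in \cite[2.7]{FJ2} is least transparent. One expects to need an induction, either on the number of intervening columns or on the row index $s$, mirroring the strategy of \cite[Prop.~3.8]{JM}. Once this vanishing is established, the dimension count is reasonably routine, and the inclusion--plus--irreducibility argument gives the equality.
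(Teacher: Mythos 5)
Your two-inclusion skeleton (vanishing of the invariant on $\mathfrak u_{v,v'}$ plus $B$-stability of the zero set for one direction; irreducibility of both sides plus a dimension bound for the other) is exactly the architecture of the proof the paper points to: the statement is not proved here but deferred to \cite[2.6]{FJ2}, which follows the pattern of \cite[Prop. 3.8]{JM}. So the overall strategy is the right one.

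Two places where your outline stops short. First, the vanishing of $\gr M_s(v,v')$ on $\mathfrak u_{v,v'}$ --- that every monomial of the leading term contains an excluded root vector in the sense of \cite[2.7]{FJ2} --- is the entire combinatorial content of the proposition, and you only assert that the construction of $w_{v,v'}$ was ``engineered'' to make it true; without that verification nothing is proved, and this is precisely the part carried out in \cite[2.6--2.7]{FJ2}. Second, your route to the bound $\dim\overline{B.\mathfrak u_{v,v'}}\geq\dim\mathfrak m-1$ via a generic $B$-stabilizer analysis is not the natural one and is unlikely to go through as stated: the formula $\dim\mathfrak u_{v,v'}=|\Delta^+|-\ell(w_{v,v'})$ controls the dimension of the subspace, not of its $B$-saturation. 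The reason $\mathfrak u_{v,v'}$ is presented as $\mathfrak n\cap w_{v,v'}(\mathfrak n)$ in the first place is to invoke the Steinberg--Spaltenstein theory: $\overline{B.(\mathfrak n\cap w(\mathfrak n))}$ is irreducible of dimension $\tfrac12\dim G.(\mathfrak n\cap w(\mathfrak n))$, and the relevant nilpotent orbit is read off via Robinson--Schensted from the shape of the modified tableau $\mathscr T_\mathfrak m(v,v')$ of \ref{2.5}, which differs from that of $\mathscr T_\mathfrak m$ by moving a single box; this is what yields codimension one. Finally, a small but real imprecision: irreducibility of the minor $M_s(v,v')$ does not automatically pass to its leading term $\gr M_s(v,v')$; that the leading term is irreducible is part of what Benlolo--Sanderson establish (equivalently, it follows from the bijection of \cite[Lemma 2.3.4]{FJ1} between generators of $\mathbb C[\mathfrak m]^{P'}$ and hypersurface orbital varieties), so it should be cited rather than treated as formal.
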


\subsection{}\label{2.7}

Fix a parabolic $\mathfrak p$ and as in \ref {2.3}, let $\mathscr D$ denote the diagram it defines. Let $\mathscr P$ be the set of all pairs of neighbouring columns $C_v,C_{v'}$ of $\mathscr D$. For ease of noatation we write $C_v,C_{v'}$ as $v,v'$.

By Proposition \ref {2.6} and \cite [4.1.2]{FJ1}, the nilfibre $\mathscr N$ for the action of $\mathfrak p$ on its nilradical is given by
$$ \mathscr N = \cap_{{v,v'}\in \mathscr P} \overline {B. \mathfrak u_{v,v'}}.$$

Obviously this contains
$$\mathscr C:= \overline {B. \cap_{{v,v'}\in \mathscr P}\mathfrak u_{v,v'}}.$$

We showed in \cite [6.9.8]{FJ2} that rather remarkably, the latter is an irreducible component of $\mathscr N$ and is just the canonical component $\mathscr N^e$ referred to in \ref {1.4}.

\section{The Description of the Canonical Weierstrass Section }\label{3}

\subsection{}\label{3.1} Start from the tableau $\mathscr T$ defined by the nilradical $\mathfrak m$ of a standard parabolic as in \ref {2.3}.

 We may describe a (rather particular) linear subvariety $e+V$ of $\mathfrak m$ by a family of lines $\ell_{i,j}$ joining boxes in $\mathscr T$ with labels $i<j$.

 By \ref {2.3}, the line $\ell_{i,j}$ defines a co-ordinate vector $x_{i,j} \in \mathfrak m$.

 Then $e+V$ is defined by two families of lines. Those of the first family carry the label $1$ and specify the sum of root vectors giving $e$ and those of the second family are labelled by $\ast$ and specify the sum of root subspaces giving $V$.

 \

 \textbf{N.B.} We say interchangeably that a line $\ell$ is labelled by a $1$ (or a $\ast$) or carries a $1$ (or a $\ast$).

 \

 In this we call the support of $e$, denoted $\supp{e}$, (resp. support of $V$, denoted $\supp{V}$) those co-ordinate vectors (resp. subspaces) defined by the lines carrying a $1$ (resp. a $\ast$).

 A line carrying a $\ast$ may be ``gated'' (see \cite [4.1]{FJ2}.

 The actual choice of the labelled lines defining $e+V$ leading to it being a Weierstrass section is via a complicated combinatorial construction which is nevertheless essentially canonical. The details are given in \cite [Sect. 5]{FJ2}. They will not be repeated here, but in \ref {3.4} we describe the resulting lines by an algorithm which is surprisingly simple.

 In \ref {3.6} we show that these lines with their labels can be recovered very simply from the image of a map from compositions to the set of distinct non-negative integers together with the set of column heights. This ``composition'' map may well be of independent interest.

 Recall the notation of \ref {2.1}. We may view $e+V$ as a linear subvariety of $\mathfrak m$, by putting $1$ (resp. $\ast$) at the $(i,j)^{th}$ place of $\textbf{M}$, whenever in our previous convention the line $\ell_{i,j}$ carries a $1$ (resp. $\ast$).

 We call $e+V$ the canonical Weierstrass section, since it has a uniform construction for all parabolics.

\subsection{}\label{3.2}
\subsubsection{}\label{3.2.1}


Following \ref {2.3}, let $\mathscr C=(C_1,C_2,\ldots,C_k)$ be the set of columns in $\mathscr D$ defining the Levi factor of a parabolic subalgebra $ \mathfrak p$ (in $\mathfrak {sl}(n)$), whose block sizes are the $\{c_i\}_{i=1}^k$.


Let $\mathscr T$ be the tableau obtained from $\mathscr D$ by the prescription given in \ref {2.3}.

 We denote the set of labelled lines in $\mathscr T$ between columns in $\mathscr C$ given through \cite [5.4]{FJ2} by $\ell(\mathscr C)$.

\subsubsection{}\label{3.2.2}

By \cite [5.4.9]{FJ2} $\ell(\mathscr C)$ behaves well under removing (or adjoining) columns on the right.

Indeed let $S$ denote the set of (ordered) columns obtained from $\mathscr C$ by deleting the last column $C_k$. Then

\begin {lemma} $\ell(S)$ is obtained from $\ell(\mathscr C)$ by deleting $C_k$ and all the lines from $S$ to $C_k$.
\end {lemma}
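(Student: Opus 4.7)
The plan is to deduce this from the explicit construction of $\ell(\mathscr C)$ given in \cite[Sect. 5.4]{FJ2}, by showing that this construction is \emph{left-to-right local}: the label carried by a line $\ell_{i,j}$ whose right endpoint lies in column $C_p$ is determined entirely by the tableau data in columns $C_1,\ldots,C_p$, together with the labels already placed between those columns. Granted locality, the lemma is immediate: running the construction on $\mathscr C$ and on $S$ produces identical output on the common columns $C_1,\ldots,C_{k-1}$, and the only lines added in passing from $S$ to $\mathscr C$ are those with right endpoint in $C_k$.

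First I would unpack the recipe of \cite[Sect. 5.4]{FJ2}, identifying for each step the exact inputs it reads. The recipe proceeds column by column; when processing $C_p$ it consults the heights $c_1,\ldots,c_p$, the sequential entries $1,\ldots,|R^{c_p}\cap(C_1\cup\cdots\cup C_p)|$ that appear, and the labels already placed on lines whose right endpoint is in some $C_q$ with $q<p$. Next I would verify, case by case in the recipe, that the choice of a $1$-label, a $*$-label, or the absence of a label on a given line is dictated by this local data alone, i.e. no rule peeks at what lies in columns $C_q$ with $q>p$. Once this is confirmed, one observes that the restriction of the procedure on $\mathscr C$ to its first $k-1$ steps coincides \emph{verbatim} with the procedure on $S$, so the two line sets agree on the common part; the $k$-th step only produces lines ending in $C_k$, and these are precisely what the lemma instructs us to delete.

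The main obstacle is locality for the gated $*$-lines. A gate, as defined in \cite[4.1]{FJ2}, records an interaction of a line with nearby columns of suitable height, and it is \emph{a priori} conceivable that a column $C_k$ of the right height could force a re-gating of a $*$-line appearing strictly to its left. One must therefore check, directly from the definition of gating and the order in which the construction of \cite[Sect. 5.4]{FJ2} inserts gates, that the gating status of any line between $C_p$ and $C_q$ with $p,q<k$ is already fixed at the moment $C_{k-1}$ has been processed and cannot be altered by the subsequent arrival of $C_k$. Modulo this point, which I expect to settle by inspection of the specific gating rule, the rest of the proof is bookkeeping.
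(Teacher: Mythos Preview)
The paper does not actually prove this lemma here: the sentence immediately preceding it says ``By \cite[5.4.9]{FJ2} $\ell(\mathscr C)$ behaves well under removing (or adjoining) columns on the right,'' and the lemma is then simply stated as a direct consequence of that reference. So there is no in-paper argument to compare against; your proposal is effectively an attempt to reconstruct what \cite[5.4.9]{FJ2} does.

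Your overall strategy --- establish a left-to-right locality property of the construction and deduce the lemma --- is the correct one and is almost certainly what lies behind \cite[5.4.9]{FJ2}. One point to correct, however: you describe the recipe as proceeding ``column by column,'' but as one sees from Section~\ref{3.5} of the present paper (and from the structure of \cite[5.4]{FJ2} itself), the construction is organised by \emph{stages} indexed by a row height $s$, not by column index. At stage $s$ one may touch many columns at once (deleting horizontal lines in $R_s$, introducing new lines in $R^s\setminus R^{s-1}$ via \cite[5.4.7]{FJ2}, relabelling, etc.). So your locality argument cannot be run as a simple induction on the column index $p$; instead you must argue, for each stage $s$, that the decisions affecting a line with right endpoint in $C_p$ depend only on the columns $C_1,\ldots,C_p$ and on earlier-stage data restricted to those columns. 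This is still true --- the adjacency constraints of \cite[5.4.3, 5.4.8(vii)]{FJ2} prevent lines from crossing taller columns, and the ``joining of loose ends'' in \cite[5.4.7(ii)]{FJ2} only looks leftward --- but the bookkeeping is across stages, not across columns.

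Your identification of gating as the delicate point is apt. The asymmetry with Lemma~\ref{3.2.3} (removal on the left, which holds only for lines in $R^{i-1}$) confirms that gating is what breaks full locality on the left but not on the right; the check you outline should go through once the stage-wise framing is in place.
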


\subsubsection{}\label{3.2.3}

This fails for removal of columns from the left but we have the next best thing described below.

To make the comparison with \ref {3.2.2} less confusing we adjoin a column $C_0$ on the left of $\mathscr C$ and denote the resulting set of (ordered) columns by $T$.

\begin {lemma} Suppose $C_0$ has height $i$. Concerning only the lines lying entirely in $R^{i-1}$, one obtains $\ell(\mathscr C)$ from $\ell(T)$ by deleting $C_0$ and all the lines from $\mathscr C$ to $C_0$.

\end {lemma}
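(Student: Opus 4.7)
The plan is to follow the template of Lemma \ref{3.2.2} (which handles right-column removal) and to exploit the fact that the construction of $\ell(\cdot)$ from \cite[Sect. 5]{FJ2} is sufficiently inductive that the labelling of lines depends only on ``nearby'' column data. First I would recall that the combinatorial algorithm processes pairs of neighbouring columns by height, so the label on a line $\ell$ lying entirely in $R^{i-1}$ is determined by the configuration of columns of heights up to $i-1$ (or, more precisely, by the restriction of all columns to $R^{i-1}$). Since $C_0$ has height exactly $i$, the boxes it contributes to $R^{i-1}$ are indistinguishable from those of any other column of height $\geq i-1$ whose data enters the algorithm only through its restriction to $R^{i-1}$.

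My strategy would then be twofold. In one direction, given a labelled line $\ell$ of $\ell(T)$ that lies in $R^{i-1}$ and is not incident to $C_0$, I would trace through the algorithm to show that the decisions fixing the label of $\ell$ use only columns of $\mathscr C$, so $\ell$ already appears with the same label in $\ell(\mathscr C)$. In the other direction, given $\ell \in \ell(\mathscr C)$ lying in $R^{i-1}$, the same analysis shows that adjoining $C_0$ does not re-route the label away from $\ell$: any new label introduced by the presence of $C_0$ must sit on a line with $C_0$ as an endpoint, and such lines are excluded from the statement.

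The main obstacle will be a careful case analysis at the step where $C_0$ could plausibly ``steal'' the role of the rightmost column of height $\geq s$ for some $s \leq i-1$, or could appear as a new left member of a neighbouring pair of height $\leq i$. One must verify that in each such situation the algorithm either places the affected line with an endpoint at $C_0$ (hence outside the lemma's scope) or else makes an identical choice to the $C_0$-free case. Here one uses that, in the former scenario, the shifted entry coming from the modification \ref{2.5} lands in row $R_i$ or below, so no label on a line strictly above $R_i$ is created or moved. Granted this bookkeeping, the two line sets agree on $R^{i-1}$ modulo lines meeting $C_0$, which is the claim.
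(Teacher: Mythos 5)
The paper does not actually prove this lemma: it is quoted from the companion paper and justified only by the citation ``[FJ2, 5.4.10]'', where a slightly stronger statement is established. So the comparison is between your sketch and that (unreproduced) argument, and your sketch has a genuine gap: the entire substantive content of the lemma is the ``careful case analysis'' that you name as the main obstacle and then defer (``granted this bookkeeping\dots''). What must be shown is that the boxes $C_0$ contributes to rows $R_1,\dots,R_{i-1}$ --- which \emph{do} change the restriction of the column data to $R^{i-1}$ --- can only ever serve as left endpoints of lines (since $C_0$ is extreme left), and that their presence does not re-route or relabel any line drawn between boxes of $\mathscr C$ at stages $\le i-1$ (via the minimal-distance choice, gating, or the joining of loose ends). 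Your stated locality principle, ``the label of a line in $R^{i-1}$ is determined by the restriction of all columns to $R^{i-1}$'', taken literally works against you here, precisely because adjoining $C_0$ alters that restriction; so it cannot be the reason the labels are preserved.

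Two of the specifics you do supply are also off the mark. A column adjoined on the extreme \emph{left} can never ``steal the role of the rightmost column of height $\ge s$''; the genuine interference scenarios are that $C_0$ becomes the left member of a new neighbouring pair of height $i$, or absorbs a left-going line from $C_1$, or supplies the extra box that triggers a ``joining of loose ends''. And the ``shifted entry coming from the modification \ref{2.5}'' belongs to the construction of $w_{v,v'}$ and the hypersurface orbital varieties, not to the line-drawing algorithm of [FJ2, Sect.\ 5.4] that defines $\ell(\mathscr C)$; invoking it here conflates two different constructions. Finally, your argument as described would apply equally to lines in row $R_i$, yet the restriction to $R^{i-1}$ is essential: at stage $i$ the bottom box of $C_0$ changes which columns of height $i$ exist and hence switches labels between $1$ and $\ast$ (this is exactly how the paper uses the cited result in case $(d)$ of \ref{3.5}). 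A correct proof must explain why the argument stops at $R^{i-1}$, and yours does not.
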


This is proved in a slightly stronger form in \cite [5.4.10]{FJ2}.

\subsection{}\label{3.3}

To complete our description of $e$ we need to describe how our construction adjoins lines in the last column to the earlier ones. This is achieved in the following subsections.

Let $b_{i,j}$ denote the box in $\mathscr D$ lying in $R_i\cap C_j$. We may also denote a box by its entry $l$, that is as $b(l)$.

\subsubsection {}\label {3.3.1}

Recall the notion of \ref {3.2}. In particular $C_k$ denotes the last column of $\mathscr D$. Recall the notation of \ref {2.1}, \ref {2.2}.

 Since we are considering the height $s$ of $C_k$ to be a variable (taking non-negative integer values), it is convenient to write $C_k$ (resp. $\textbf{C}_k$) more precisely as $C_k(s)$ (resp. $\textbf{C}_k(s)$) when this height is $s\in \mathbb N$. Similarly we write $\textbf{M}$ as $\textbf{M}(s)$, when its last column block is $\textbf{C}_k(s)$.

 We may note that $n$ is the largest entry of $C_k$ and occurs in its lowest row.

Recall that $S$ denotes the set of columns (resp. column blocks) excluding the last, that is $S=\{C_1,C_2,\ldots,C_{k-1}\}$. Let $\textbf{S}$ denote the corresponding set of column blocks, that is $\{\textbf{C}_1,\textbf{C}_2,\ldots,\textbf{C}_{k-1}\}$.

Recall \ref {3.1} that $\textbf{M}$ has entries in $\{1,*\}$ which determine the Weierstrass section $e+V$. These entries all occur in the sub-matrix defining $\mathfrak m$.


Let $\textbf{M}^k$ be the $n-s \times n-s$ matrix formed by omitting the last $s$ rows and columns of $\textbf{M}$. The blocks $\{B_i\}_{i=1}^{k-1}$ (resp. the column blocks $\{\textbf{C}_i\}_{i=2}^{k-1}$) define the Levi factor $\mathfrak l^\kappa$ and nilradical $\mathfrak m^\kappa$ of a standard parabolic subalgebra $\mathfrak p^\kappa$ of $\mathfrak {sl(n-s)}$.

By Lemma \ref {3.2.2}, the entries in $\textbf{S}$ are unchanged if we omit the last column $C_k(s)$ or increase its height $s$. Thus if we define $e^\kappa$ (resp. $V^\kappa $) by dropping all terms which obtain from lines joining $C_k$ to $S$, then $e^\kappa+V^\kappa$ is a Weierstrass section for the action of the derived algebra of $\mathfrak p^\kappa$ on $\mathfrak m^\kappa$.

\

\textbf{N.B.}. The notation $C_k$ was already used in \cite [2.1]{FJ2} for the last column of $\mathscr D$. However in Sections \ref {4.4} and \ref {4.5}. it also convenient to use $k$ as a dummy index for the co-ordinates. This could cause some confusion (for example in Lemma \ref {4.4.3}). Thus we have used $\kappa$ instead of $k$ as the superscript above.

\subsubsection {}\label {3.3.2}

Let $\ell_{b,b'}$ be a line given by the construction of a Weierstrass section, that is by \cite [5.4]{FJ2}. It may carry a $1$ or a $\ast$. Suppose $b$ occurs in $R_i$ and labelled in $\mathscr D$ by $r$. Then a $1$ or a $\ast$ appears in the $r^{th}$ row of $\textbf{M}$. Yet $i,r$ are only vaguely related, so it is convenient to denote the row in which a $1$ or a $\ast$ appears in $\textbf{M}$ using boldface to avoid confusion, that is by $\textbf{r}$, which will always refer to a row of \textbf{M}.

\

Let $C,C'$ be a pair of neighbouring columns. Then $\ast$ always appears in the rightmost column of the column block $\textbf{C}'$, \cite [6.6.2]{FJ2}. The row of $\textbf{M}$ in which it appears is also specified in \cite [6.6.2]{FJ2}, however this is more complicated. It is through the canonical row associated to the pair which will now be denoted in boldface, that is by $\textbf{r}_{C,C'}$. It is the row of $\textbf{M}$ in which $\ast$ ought to appear and in fact does appear if the pair $C,C'$ does not ``surround'' any other pair of neighbouring columns in the sense of \cite [6.6.2]{FJ2}, as recalled below.

We say that a pair $C_1,C_1'$ of neighbouring columns surround the pair $C,C'$, if ht$C_1'= \text{ht} C +1$, and $C_1'$ lies strictly to the right of $C'$ with no columns of height $\geq s$ between. We do \textit{not} require $C_1$ to lie to the left of $C$ - see examples below. In this case and if there is a pair $C_1,C_1'$ which do surround the pair $C,C'$, then the $\ast$ corresponding to the latter pair again occurs in $\textbf{r}_{C,C'}$.

This process gives rise to a string of $\ast$'s in $\textbf{r}_{C,C'}$. This string may or may not end in a $1$ - see \cite [6.6.2, Examples] {FJ2}.

\

One may ask how $\textbf{r}_{C,C'}$ and $\textbf{r}_{C_1,C_1'}$ are related. The latter is strictly less than the former (that is to say the second $\ast$ is pushed downwards in \textbf{M}) if $C_1$ lies to the left of $C$, for example in the composition $(2,1,1,2)$. Yet the latter is strictly greater than the former (that is to say the second $\ast$ is pushed upwards in \textbf{M}) if $C_1$ lies to right of $C$, for example in the composition $(1,2,1,2)$.

 \subsection {Increasing the height $s$ of $C_k$}\label {3.4}

 Given a column $C$ of height $s$ and $t\leq s$, let $\textbf{C}^t$ denote the $t^{th}$ column of the corresponding column block $\textbf{C}$. Again the labelling of the columns in $\mathscr D$ and in $\textbf{M}$ is different but related.

 \subsubsection {Two Preliminaries}\label {3.4.1}

Recall the notation of \ref {3.3.1}. By \ref {3.3.2}, if $C_k(s)$ has a left neighbour in $S$, then $\textbf{C}_k(s)$ has just one $*$ entry and this occurs in $\textbf{C}_k(s)^s$, which is the rightmost column of $\textbf{C}_k(s)$, and has no $\ast$ entries otherwise. Again by \cite [Lemma 5.4.8(vi)] {FJ2} every column and every row of $\textbf{C}_k(s)$ has at most a single $1$ entry.

Recall \cite [5.4.1]{FJ2} that a right extremal box of $S$ is one with no adjacent right neighbour in $\mathscr C$. Such a box exists exactly when $S$ admits a rightmost column $C$ of height $>s$. In (b) below, we let $b$ denote the box $R_{s+1}\cap C$, which is clearly right extremal and denote its entry by $m$.

Suppose $C_k$ has height one. This means that $\textbf{C}_k$ is a single column. Let $t$ be the height of $C_{k-1}$. Then either

\

(a). If $S$ has no column of height one, then the only entry of $\textbf{C}_k$ is $1$ on its $(n-t)^{th}$ row.

\

(b). If $S$ has a column of height one, then $\textbf{C}_k$ has an $*$ on its $(n-t)^{th}$ row. Moreover with $b,m$ given as above, then $\textbf{C}_k$ has a $1$ on row \textbf{m} (in its unique column).


\

For (a) and the first part of (b), observe that step two of \cite [5.3]{FJ2} gives a horizontal line $\ell$ carrying a $1$ (resp $\ast$) in the first row $R_1$ joining the highest box $b'$ in $C_{k-1}$ to a unique box in $C_k$.
Moreover by \cite [5.4]{FJ2} in step $3$, the horizontal lines in $R_1$ and their labels are left unchanged.

The last part of (b) follows from \cite [5.4.7(i)]{FJ2}, which gives a line $\ell_{b,b_{s,k}} \in \ell(\mathscr D)$ carrying a $1$, if there is a right extremal box $b$ in $S\cap R_{s+1}$.

\subsubsection {}\label {3.4.2}

As we increase the height $s$ of $C_k$, then by Lemma \ref {3.2.2}, the only changes which occur are in the entries of $\textbf{C}_k$ (and not in the entries of $\textbf{S}$). Moreover these behave according to a surprisingly simple set of rules which we now describe.

The proofs will be given in \ref{3.5}.

\

 (i). Suppose $t \in [1,s-1]$. The entries of
 $\textbf{C}_k(s+1)^t$ are those of $\textbf{C}_k(s)^t$.

 \

 We now describe the entries of the last two columns of $\textbf{C}_k(s+1)$, that is when $t$ above is in $\{s,s+1\}$.

 \

 (ii). Suppose that $S$ has a column of height $s$. Then there is a $\ast$ in $\textbf{C}_k(s)^s$. It is replaced by a $1$ in $\textbf{C}_k(s+1)^s$, in the same row.

 \

 In other words the position of $\ast$ does not change but it becomes a $1$.

\

(iii). Suppose that $1$ appears in $\textbf{C}_k(s)^s$ in some row $\textbf{r}$ of $\textbf{M}$, then either (A) or (B) below holds.

\

(A). Suppose there is a $\ast$ in $\textbf{C}_k(s)^s$ and in row $\textbf{r}'$ (different of course to $\textbf{r}$).

(This occurs exactly when $S$ has a column of height $s$ and a column of height $>s$. In this the line $\ell$ carrying $\ast$ is gated and the line $\ell'$ carrying $1$ is constructed by \cite [5.4.7(i) or(ii)]{FJ2} at stage $s+1$ and both lines have $b_{s,k}$ as their right hand end point.)

Then a $\ast$ (resp. $1$) appears in $\textbf{C}_k(s+1)^{s+1}$ and in row $\textbf{r}$ if $S$ has (resp. does not have) a column of height $s+1$.

\

In other words $1$ is shifted to the right becoming a $\ast$ or a $1$.

\

(B). Suppose there is no $\ast$ in $\textbf{C}_k(s)$. This occurs exactly when $S$ has no column of height $s$.

Then a $1$ appears in $\textbf{C}_k(s+1)^s$ in row $\textbf{r}$.

\

In other words this $1$ entry stays fixed as $s$ increases by one.

\

If $S$ admits a column of height $s+1$, a $\ast$ appears $\textbf{C}_k(s+1)^{s+1}$, at the end of the lowest string (cf \ref {3.3.2}) of $\ast$'s (not ending in a $1$). It is replaced by a $1$ at that place if $S$ meets $R_{s+1}$, but does not admit a column of height $s+1$.

\

In other words a string of $\ast$'s on a given row of $\textbf{M}$ may be augmented in its last column by a $\ast$ or a $1$, as prescribed.


\

\textbf{Examples}. Consider the composition $(3,2,1,1,2,2)$. Then the line $\ell:=\ell_{9,11}$ carries a $\ast$ and the line $\ell':=\ell_{6,11}$ (obtained by the joining of lose ends \cite [5.4.7(ii)]{FJ2}) carries a $1$. Thus $A$ holds.

Increasing the height of the last column by one, gives a line (namely $\ell_{6,12})$ carrying a $\ast$. Had the first column been of height four, this line would have carried a $1$.

By contrast for the composition $(1,2,1)$, the line $\ell:=\ell_{2,4}$ carries a $\ast$ and the line $\ell':=\ell_{3,4}$ (obtained by \cite [5.4.7(i)]{FJ2}) carries a $1$. Increasing the height of the last column by one, gives a line (namely $\ell_{3,5})$ carrying a $\ast$. Had the first column been of height three, this line would have carried a $1$.

\

 Consider the composition $(3,1,1,2)$. There is no $\ast$ in the last column; but there is a $1$ given by the line $\ell_{4,7}$. Thus $B$ holds.

 When the last column is increased in height by one, a $\ast$ appears in $\textbf{C}_k(3)^3$. It is replaced by a $1$ if the first column is increased in height by one.

\

 Suppose that $s$ has been increased by one and (i)-(iii) have be carried out.

 \

 \textbf{N.B.} Recall \cite [5.4.8(viii)]{FJ2} that a $1$ in a given row of \textbf{M} cannot be followed by a $\ast$ in a subsequent column.

 \

 Suppose $S$ does have a column of height $s+1$. Then (for $s\geq 1$) the rules (i)-(iii) may not give a $1$ in the last column of $\textbf{M}(s+1)$.

 In this case the construction of \cite [5.4]{FJ2} can add a $1$ to the last column of $\textbf{C}_k(s+1)$ and this follows the rule (iv) given below.


 Let $\mathscr R$ denote the set of rows of $\textbf{M}(s+1)$ either not having a $1$ except perhaps in the last column or not having a $\ast$ in its last column.

 Now recall that the elements of Weyl group of the Levi factor of $P$ interchanges the rows of $\textbf{M}(s+1)$ and so induce equivalence classes in $\mathscr R$. If $\mathscr R$ is not empty, let $\textbf{r}' $ be the unique lowest row in $\mathscr R$ (i.e. that with the largest subscript) and let $\textbf{r}$ be the unique highest row in the equivalence class of $\textbf{r}'$.

 \

 (iv). Suppose $\mathscr R$ is not empty. Then $1$ is placed in $\textbf{C}_k(s+1)^{s+1}$ in row $\textbf{r}$, \textit{or already occurs there} through $(i)-(iii)$.

 \

 Thus up to the action of the Levi, $1$ in the last column of $\text{M}(s+1)$ occurs in the row below the empty rows.

 \

 \textbf{Remark 1.} The complication of having to consider equivalence classes comes about because our construction though canonical is by induction on rows in which the highest free box is linked. It is perfectly possible that we would obtain a Weierstrass section $e+V$ with $e$ regular by putting $1$ in the intersection of the last column of $\textbf{C}_k(s+1)$ with the lowest row of $\mathscr R$, that is with $\textbf{r}'$.

 \

 \textbf{Remark 2.} In the subsequent application of (iv), the above \textbf {N.B.} is of prime importance.

\

\textbf{Remark 3.} Our presentation is perhaps a convoluted way of saying that if $1$ already appears in the last column by (i)-(iii) above, then it must also follow the rule, set out by (iv).

\

(v). If the last column of $\textbf{C}_k(s)$ is empty, so are the last two columns of $\textbf{C}_k(s+1)$.

\subsection {}\label {3.5}

In order to prove the assertions in \ref {3.4}, we must recall the construction in \cite [Sect. 5.4]{FJ2}. Unfortunately the reader may have to study loc. cit. first \textit{but we shall give explicit references to every required result}.


Recall the notation of \ref {3.3}.

Recall that $s$ is the height of $C_k(s)$.


On increasing $s$ by one, a left going horizontal line from $b_{s+1,k}$ may be introduced at step $2$.

Then we apply the $(s+1)^{th}$ stage of \cite [5.4]{FJ2}. Briefly this does just three things.

\

$(\textbf{X})$. (Some) horizontal lines in $R_{s+1}$, required by step $2$, are deleted \cite [5.4.6]{FJ2}.

\

$(\textbf{Y})$. (Some) lines in $R^{s}\setminus R^{s-1}$ carrying a $1$ given by applying \cite [5.4.7(ii)]{FJ2} at stage $s$, are deleted. (These are the lines given by ``joining loose ends''.) No lines in $R^{s}$ carrying a $\ast$ are deleted, but may be relabelled by a $1$.

\

$(\textbf{Z})$. New lines in $R^{s+1}\setminus R^s$ are introduced according to the rule in \cite [5.4.7]{FJ2}. They may carry either a $1$ or a $\ast$.

\

Recall that since only the last column $C_k$ is being changed, only lines meeting $C_k$ are changed (Lemma \ref {3.2.2}).


\

\textbf{Example.} Let $\mathscr D$ be given by the composition $(2,1,1,1)$, so then $k=4,s=1$.

The only lines joining $C_4$ to $S$ are $\ell_{4,5}$ which carries a $\ast$ and $\ell_{3,5}$ which carries a $1$ (the latter joins loose ends).

When $s$ is increased by one, a horizontal line $\ell_{2,6}$ is introduced at step $2$.

At stage $2$, $\ell_{3,5}$ is deleted and the $\ast$ on $\ell_{4,5}$ is replaced by a $1$, illustrating $(\textbf{Y})$ and $\ell_{2,6}$ is deleted, illustrating $(\textbf{X})$. The latter is replaced by $\ell_{3,6}$, which carries a $\ast$, illustrating $(\textbf{Z})$. (Strictly speaking $\ell_{2,4}$ also replaces $\ell_{2,6}$ but is already present when $s=2$ as predicted by Lemma \ref {3.2.2}.)

\

It remains to consider the assertions of \ref {3.4.2}.

\

Consider $(i)$.

By \cite [5.4.8 (iii),(ix)]{FJ2} a right going line can only go down by one row and then it must carry a $1$ and $b'$ must also be the right end point of a line carrying a $\ast$. Then by \cite [6.6.2, second paragraph]{FJ2} $b'$ must be the box $C_k(s)\cap R_s$. Thus

\

$(*)$. Every line with right end point in $R^{s-1}\cap C_k(s)$ lies entirely in $R^{s-1}$.

\

 On the other hand when the height of $C_k$ is increased from $s$ to $s+1$ and stage $s+1$ is implemented, the lines lying entirely in $R^{s-1}$ are left unchanged by $(\textbf{X})-(\textbf{Z})$. Then by $(*)$, the lines joined to a $b_{t,k}:t<s$ are unchanged (and so are their labels).

This gives the assertion of (i).

\

Consider (ii).

By hypothesis $S$ has a column of height $s$.

Then the left going line $\ell_{b,b_{s,k}}$ from $b_{s,k}$ to that box $b\in S$ determined in stage $s$ by \cite [5.4.7(i)]{FJ2}, carries a $\ast$ by \cite [5.4.8]{FJ2}. Moreover it lies entirely in $R^s$, so by $(\textbf{Y})$, it is not deleted but now carries a $1$ because $C_k(s+1)$ has height $s+1$. This establishes (ii).

\

Consider (iii)A.

The hypothesis means that there is a line $\ell$ from $b\in S$ to $b_{s,k} \in C_k(s)$ carrying a $\ast$ and a second line $\ell'$ from $b' \in S$ to $b_{s,k} \in C_k(s)$ carrying a $1$. This means that $\ell$ must be gated and the construction in \cite [5.4.7(i)or(ii)]{FJ2} gives a line $\ell'$ carrying a $1$ whose left end point lies in $S$.

 As $s$ is increased by one, $\ell$ remains with the same end points but now carries a $1$ (as noted in (ii)) so can no longer be gated. This forces $\ell'$ to be replaced by a right going line $\ell''$ to $b_{s+1,k}$, which of course is in the same column of $\mathscr D$ as $b_{s,k}$. Then by the minimal distance criterion of \cite [5.4.7]{FJ2}, the line $\ell''$ must have the same left-hand point $b'$ as \textit{had} $\ell'$. It carries a $\ast$ (resp. $1$) if $S$ has (resp. does not have) a column of height $s+1$. The new entry to which $\ell''$ gives rise, lies in the same row of $\textbf{M}$, yet now in the last column of $C_k(s+1)$, because its right hand end point is $b_{s+1,k}$.

 This establishes the last part of $(iii)A$.


\

\textbf{Examples.} In the case of the parabolic defined by the composition $(1,2,2,1)$, there is a gated line $\ell=\ell_{4,6}$. Then $\ell'=\ell_{5,6}$ being given by (i) of \cite [5.4.7]{FJ2}. When the last column is increased by one in height, $\ell$ carries a $1$ and $\ell'$ is replaced by a line $\ell''=\ell_{5,7}$ and in this case carries a $\ast$.

On the other hand for the composition $(2,3,1,3,2,1,2)$ the gated line $\ell$ is that joining $(10,14)$. There is a further gated line joining $(8,11)$. Then $\ell'$ is given by \cite [5.4.7(ii)]{FJ2} and joins $(8,14)$. When the last column is increased in height by one, $\ell$ carries a $1$ and $\ell'$ is replaced by a line $\ell''$ with the same beginning point as $\ell'$ but ending in $b_{s+1,k}$, that is joining $(8,15)$ and in this case carries a $\ast$.

\

Consider (iii)B.

Under the hypothesis there is no line going from $S$ to $b_{s,k}$ carrying a $\ast$. Consequently the line $\ell$ from $b\in S$ to $b_{s,k} \in C_k(s)$ carrying a $1$ is obtained at stage $s$ and moreover by using (i) and not by (ii) of \cite [5.4.7]{FJ2}. It lies entirely in $R^s$, though it need not be horizontal (viz. $\ell_{4,7}$ of the example below). Thus it remains at stage $s+1$ by $(\textbf{Y})$.

This proves the first part of (iii)B.

\

\textbf{Example.} Consider the line joining $(4,7)$ in the composition $(3,1,1,2)$ obtained by applying \cite [5.4.7(i)]{FJ2} at stage $2$. It is carries a $1$ since there is no column in $S$ of height $2$. When $s$ is increased by $1$, it remains.

\

Consider the second part of (iii)(B).

Under the hypothesis, at step $2$ there is a left going horizontal line $\ell$ from $b_{s+1,k}$ if $S$ meets $R_{s+1}$ and no such line otherwise. In the former case it carries a $\ast$ (resp. a $1$) if $S$ admits (resp. does not admit) a column of height $s+1$. In step $3$, the right end point of $\ell$ is unchanged by the way boxes are rejoined \cite [5.4.7(i)]{FJ2} and the labelling is left unchanged \cite [5.4.8]{FJ2}. Thus a $\ast$ (resp. $1$) appears in $\textbf{C}_k(s+1)^{s+1}$, as asserted.

\

\textbf{Example.} Consider again the composition $(3,1,1,2)$.
When $s$ is increased by $1$, the new line $\ell'$ joins $(3,8)$ and carries a $\ast$. Had the first column had height $>3$, then $\ell'$ would carry a $1$. Either appear in $R_3$ at the end of the lowest string of $\ast$'s not ending in $1$ (which is actually an empty string in this case).

\

Consider (iv). (This is the most difficult part.)



There can be at most a single $1$ in $\textbf{C}_k(s+1)^{s+1}$. It corresponds to a line $\ell_{b,b_k(s+1)}$ in $\ell(S\sqcup C_k(s+1))$, carrying a $1$. Let $r$ be the entry of $b$. Then $1$ appears in row $\textbf{r}$ of \textbf{M}. Set $\mathscr R^-=\mathscr R\setminus \textbf{r}$.

Condition (iv) holds if every element of $\mathscr R^-\cap \textbf{C}_k(s+1)$ lies above $\textbf{r}$, or below but reached by the action of the Levi factor. This holds for example if $\mathscr R^-$ is empty.

Let $C_1,C_2,\ldots, C_{m-1}$ be the columns of height $s+1$ in $S$ and set $C_m:=C_k(s+1)$.

We now establish (iv) under

\

$(*)$. Suppose $S$ has no column of height $>s+1$.

\

\textbf{Claim}. Under $(*)$, $\mathscr R^-$ is empty.

\

Adjoin a column $C_0$ to the extreme left of $\mathscr D$ of height $s+1$. Then through the construction of \cite [5.4.7(i)]{FJ2} and the labelling in \cite [5.4.8]{FJ2}, there exists for all $i \in [1,m]$ a box $b'_i \in C_i\cap R_{s+1}$, a box $b_i \in R^{s+1}$, which by adjacency \cite [5.4.3]{FJ2}, must lie to the right of $C_{i-1}$ and a line $\ell_{b_i,b_i'}$ carrying a $\ast$.

On the other hand by \cite [5.4.8(v)]{FJ2} every box in $R^{s+1}$, with the exception (of course) of those in $C_m$ and the possible exception of the $b_i:i \in [1,m]$ has a right going line carrying a $1$.

When $C_0$ is suppressed, then $b_1\in C_0$ is also suppressed. If $b_1\notin C_0$, one has $\ell_{b_1,b_1'}\in \ell(\mathscr D)$ carrying a $1$, by the joining of loose ends \cite [5.4.7(ii)]{FJ2}, and likewise line $\ell_{b_i,b_{i+1}'}\in \ell(\mathscr D):i\in [2,m-1]$ carries a $1$ with the exception of $b_m:m\geq 2$, which carries a $\ast$ appearing in the last column of $\textbf{M}(s+1)$. Up to this exception every box $R^{s+1}$ hence in $\mathscr D$, not of course in $C_m$, has a right going line carrying a $1$. This proves the claim.

\textbf{Remark.} $\mathscr R$ itself is empty if $m = 1$. For the composition $(1,2,2)$ it is empty, for the composition $(1,1,2,2)$ it is not empty.

We now establish (iv) under

\

$(**)$. There is a column of height $>s+1$ in $S$.

\

Choose a column $C$ of height $>s+1$ in $S$ nearest to the right. Retain our previous notation for the columns of height $s+1$.

 Choose $t \in [1,m]$ minimal such that $C_t$ lies to the right of $C$. Set $b'=C\cap R_{s+2}$ which is an extremal box. By our chosen labelling $b_t'$ lies strictly to the right of $C$. If $t \geq 2$, then $\ell_{b_t,b_t'}$ is gated and since \cite [5.4.7(i)]{FJ2} is carried out before \cite [5.4.7(ii)]{FJ2}, there is a line $\ell_{b',b_t'}$ carrying a $1$ and this precludes the line $\ell_{b_{t-1},b_t'}$.

\

$(a)$. Suppose $t \leq m-2$.

\

 Then by the joining of loose ends \cite [5.4.7(ii)]{FJ2} there is a line $\ell_{b_{m-1},b_m'}$ carrying a $1$. Let $r$ be the entry of $b_{m-1}$. This puts a $1$ on the last column of $\textbf{M}(s+1)$ in row $\textbf{r}$.

 Yet any box with entry $u>r$ lies between the columns $C_{m-2},C_m$ and so (as in $(*)$) has a right going line carrying a $1$, with the exception of $\ell_{b_{m-1},b'_m}$ which carries a $\ast$ appearing in $\textbf{C}_k(s+1)^{s+1}$.

 This establishes (iv) if $(a)$ holds.

 \

 $(b)$. Suppose $t=m-1$.

 \

 The gated line $\ell_{b_{m-1},b'_{m-1}}$ cannot cross $C$ by adjacency \cite [5.4.8(vii)]{FJ2}. Thus $b_{m-1}$ lies in a column to the right of $C$.

 Now by the joining of loose ends \cite [5.4.7(ii)]{FJ2} there is a line $\ell_{b_{m-1},b_m'}$ carrying a $1$. If $b_{m-1}$ lies in a column strictly to the right of $C$, then (iv) holds exactly as in $(a)$.

 Suppose $b_{m-1} \in C$. Then there may be empty rows meeting the last column of $\textbf{M}(s+1)$ below \textbf{r} indexed by the entries $r'>r$ of the boxes in $C$ strictly below $b'$. However these rows are equivalent to \textbf{r} via the action of the Levi factor.

 This establishes (iv) if $(b)$ holds.

 \

 $(c)$. Suppose $t=m:m>1$.

 \

 In this case since $\ell_{b_{m},b'_m}$ is gated, so there is a line $\ell_{b',b'_m}$ carrying a $1$ and we conclude as in $(b)$.

 This establishes (iv) if $(c)$ holds.

 \

 $(d)$. Suppose $t=m=1$.

 \

 Adjoin a column $C_0$ of height $s+1$ to the left of $\mathscr D$. Then by the joining of loose ends \cite [5.4.7(ii)]{FJ2}, there is a line $\ell_{b_1,b_1'}$ which carries a $\ast$ and therefore cannot cross $C$ by adjacency \cite [5.4.8(vii)]{FJ2}. Then by \cite [5.4.10]{FJ2} it remains with the same end points when $C_0$ is removed but $\ast$ is replaced by a $1$. Then we may conclude as in $(b)$.

 This completes the proof of (iv).

 \

 The following exemplifies $(*)$.

 \

\textbf{Example 1.} Take the column heights in $S\sqcup C_k(s)$ to be given by the composition $(1,2,2,2,1,2)$, with $s=1$.

In the example, $i=6,j=5$ and \cite [5.4.7(ii)]{FJ2} gives the line $\ell_{5,10}$ carrying a $1$. This line would not have been given by $(1)-(3)$ of \ref {3.4}.

\

The following exemplifies $(**)$ in case $(a)$.

\textbf{Example 2.} Consider the composition $(2,3,2,2,2)$, with $s=1$. Then $1$ occurs in row \textbf{8} and a $\ast$ in row $\textbf{9}$ both in the last column of $\textbf{M}(s+1)$. Through the joining of loose ends $1$ occurs in row \textbf{7}. This would not have been given by $(1)-(3)$ of \ref {3.4}.

\

The following exemplifies $(**)$ in case $(b)$, when $b_{m-1}$ lies in a column strictly to the right of $C$.

\textbf{Example 3.} Consider the composition $(3,1,1,2)$, with $s=1$. The line with right end point $b_{2,4}$ obtained from step $2$ is just $\ell_{2,7}$. Yet since $\ell_{4,5}$ is gated, this line is modified in step $3$ to $\ell_{4,7}$, via \cite [5.4.7(i)]{FJ2}. The resulting $1$ in \textbf{M} lies in row $\textbf{4}$, so below the empty row $\textbf{3}$.

 \

 The following exemplifies $(**)$ in case $(c)$.

Consider the composition $(4,2,1,3)$. The line with right end point $b_{3,4}$ obtained from step $2$ is just $\ell_{3,10}$. It is not modified in step $3$. The resulting $1$ in \textbf{M} lies in row $\textbf{3}$ so above the empty row $\textbf{4}$. Yet these rows are interchanged by that part of the Levi factor coming from the first column.

\

Consider (v). Let $|S|$ denote the greatest height of a column in $S$.

Observe that (v) implies that $b_{s,k}$ is a left extremal box and so $s>|S|$.

 Again (v) also means that there is no gated line $\ell_{b,b_{s-1,k}}$ in $R^{s-1}$ in stage $s$. Otherwise there would be a line $\ell_{b,b_{s,k}}$ carrying a $1$ by the very first part of \cite [5.4.7]{FJ2}. Moreover if $s>|S|$ then by \cite [5.4.8]{FJ2} no lines in $R^s$ carrying a $\ast$ are adjoined in stage $s+1$.

 By the first paragraph $s+1 >|S|$ and by the second paragraph there are
 no gated lines in $R^s$ at stage $s+1$. Thus the conclusion of (v) results by a second use of the very first part of \cite [5.4.7]{FJ2}.

\subsection {Stabilisation}\label {3.6}


\subsubsection {A Column Existence Lemma} \label {3.6.1}

The following is proved as part of ``Claim'' in the proof of \cite [Cor. 6.5.4]{FJ2}. For the convenience of the reader we shall repeat the argument.

\begin {lemma} Let $C_1<C_2$ be columns of $\mathscr D$ and a choice of boxes $b_i \in C_i:i=1,2$. Let $R_t$ be the row in which $b_2$ occurs. Assume that $\ell_{b_1,b_2}\in \ell(\mathscr D)$. Then there exists a column $C': C_1\leq C' <C_2$ of height at least t -1.
\end {lemma}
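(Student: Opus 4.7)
The plan is to show that the column $C_1$ itself serves as the required $C'$. The key ingredient is the descent constraint recorded in \ref{3.5} (via \cite[5.4.8(iii),(ix)]{FJ2}): any right-going line in $\ell(\mathscr D)$ descends by at most one row. More precisely, if $\ell_{b,b'} \in \ell(\mathscr D)$ and $b'$ is its right-hand endpoint, then the row index of $b'$ exceeds that of $b$ by at most one.

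I would then apply this constraint to $\ell_{b_1,b_2}$. Since $C_1 < C_2$, the box $b_2$ is the right-hand endpoint of the line, and by hypothesis $b_2 \in R_t$. The descent property therefore forces $b_1 \in R_s$ for some $s \geq t-1$. Consequently $C_1$ contains a box in row $R_{t-1}$ or lower, so its height is at least $t-1$. Setting $C' := C_1$ yields a column with $C_1 \leq C' < C_2$ whose height is at least $t-1$, which is the desired conclusion.

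The only delicate point, and so the main obstacle, is to confirm that the one-row descent property holds uniformly across every line of $\ell(\mathscr D)$, not merely for lines produced by a single step of the construction. This would be handled by a case analysis over the line-generating clauses of \cite[Sect.~5.4]{FJ2}: horizontal lines introduced at step 2 satisfy the constraint trivially; right-going diagonal lines produced by \cite[5.4.7(i),(ii)]{FJ2} are controlled by clause (iii) of \cite[5.4.8]{FJ2}; and the (re)labelling passes preserve endpoints and hence the constraint by clause (ix). Once this uniformity is granted, the argument collapses to the two short observations above, consistent with the authors' indication that the proof is worth merely repeating rather than redoing in detail.
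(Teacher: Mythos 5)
Your proof is not correct: the ``uniform one-row descent constraint'' on which it rests is false, and consequently $C_1$ cannot in general serve as the required column $C'$. The construction of \cite[5.4.7]{FJ2} does produce lines that descend by two or more rows. A concrete counterexample appears in the paper's own Figure~\ref{fig3} for the composition $(3,2,1,1,2,3)$: the line $\ell_{6,12}$ belongs to $\ell(\mathscr D)$ (it carries a $\ast$), its left endpoint $b_1=b(6)$ lies in $R_1$ in a column of height $1$, and its right endpoint $b_2=b(12)$ lies in $R_3$, so $t=3$ while $\operatorname{ht}C_1=1<t-1=2$. The sentence from \ref{3.5} you invoke is being used there in a restricted situation (lines whose right endpoint lies in $R^{s-1}\cap C_k(s)$, en route to assertion $(*)$), with further qualifications (``\dots and then it must carry a $1$ and $b'$ must also be the right end point of a line carrying a $\ast$''); it cannot be promoted to a blanket statement about every line of $\ell(\mathscr D)$, and your proposed ``case analysis over the line-generating clauses'' would break precisely on the loose-end/gating clauses that create multi-row descents.

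The paper's actual argument splits into two cases. If $b_1\notin R^{t-2}$ the claim is immediate with $C'=C_1$ (this is the only case your argument covers). If $b_1\in R^{t-2}$, so that $\ell_{b_1,b_2}$ descends by at least two rows, the construction forces the existence of an auxiliary line $\ell'$ carrying a $\ast$, ungated at the $(t-1)^{\mathrm{th}}$ stage, with the same left endpoint $b_1$; its right endpoint $b'$ lies in $R_{t-1}$ and, because $\ell'$ carries a $\ast$, sits at the bottom of its column $C'$, which therefore has height exactly $t-1$. Adjacency then places $C'$ strictly to the left of $C_2$ and to the right of $C_1$. It is this $C'$ --- in the example above, the column of height $2$ containing the box with entry $9$ --- that witnesses the lemma, not $C_1$. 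You would need to add this second case to repair the proof.
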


\begin {proof} This is immediate unless $b_1 \in R^{t-2}$.

If not, $\ell:=\ell_{b_1,b_2}$ must be left and downward going by at least two rows. This can only happen if there is a line $\ell'$ carrying a $\ast$, ungated at the $(t-1)^{th}$ stage \cite [5.4.3.5.4.4]{FJ2}, with the same left hand end-point $b_1$ as $\ell$, so allowing $\ell$ to be drawn \cite [5.4.7]{FJ2}. By up-going linkage \cite [5.4.3]{FJ2} the right hand end point $b'$ of $\ell'$ lies in $R_{t-1}$ and since $\ell'$ carries a $\ast$, the box $b'$ lies at the bottom of the column $C'$ in which it is contained, so $C'$ has height $t-1$. Yet $C_2$ has height $\geq t$ so by adjacency \cite [5.4.3]{FJ2}, $C'$ lies strictly to the left of $C_2$ and of course to the right of $C_1$.

\end {proof}

\subsubsection {}\label {3.6.2}

 Recall that a $\ast$ can only appear at most once in $\textbf{C}_k(s)$ and then necessarily in $\textbf{C}_k(s)^s$. Recall also that, for $t\leq s$, at most one $1$ can appear in $\textbf{C}_k(s)^t$, by \ref {3.3.2}. We shall use $\textbf{C}_k(s)^t=\phi$ to mean that neither a $\ast$ nor a $1$ appear in $\textbf{C}_k(s)^t$ and we say that this column is empty.


\begin {lemma} For all $t \leq s$ one has $\textbf{C}_k(s)^t=\phi$ if and only if $\textbf{C}_k(s+1)^t=\phi$.

\begin {proof} For $t<s$, this is immediate from \ref {3.4.2} (i). For $t=s$, ``only if'' follows from \ref {3.4.2} (v). For ``if'' suppose $\textbf{C}_k(s+1)^s=\phi$. Then by \ref {3.4.2}(ii), there is no $\ast$ in $\textbf{C}_k(s+1)^s=\phi$. Then by \ref {3.4.2}(iii), there is no $1$ in $\textbf{C}_k(s+1)^s=\phi$.
\end {proof}

\end {lemma}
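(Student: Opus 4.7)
The plan is to read the lemma off the transition rules (i)--(v) of \ref{3.4.2}, which were set up precisely to describe how the entries of $\textbf{C}_k(s)^t$ are related to those of $\textbf{C}_k(s+1)^t$ when the height of the last column is increased by one. The argument should then split according to whether $t<s$ or $t=s$.

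For $t<s$, rule (i) of \ref{3.4.2} asserts that the entries of $\textbf{C}_k(s+1)^t$ coincide with those of $\textbf{C}_k(s)^t$. The equivalence is therefore immediate in this regime, with no case analysis required.

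The substantive case is $t=s$, for which I would treat the two implications separately. For the ``only if'' direction I would invoke rule (v) directly: the hypothesis $\textbf{C}_k(s)^s=\phi$ says that the last column of $\textbf{C}_k(s)$ is empty, hence by (v) the last two columns of $\textbf{C}_k(s+1)$ are empty, so $\textbf{C}_k(s+1)^s=\phi$ in particular. For the ``if'' direction I would argue by contrapositive, using that $\textbf{C}_k(s)^s$ can contain at most one $\ast$ (only possible in this rightmost column of $\textbf{C}_k(s)$) and at most one $1$. If a $\ast$ is present, rule (ii) converts it into a $1$ in the same row of $\textbf{C}_k(s+1)^s$, which is therefore non-empty; if only a $1$ is present, then no $\ast$ occurs in $\textbf{C}_k(s)$ at all, so we are in case (B) of rule (iii), which deposits a $1$ in $\textbf{C}_k(s+1)^s$ in the same row, again forcing non-emptiness.

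The main pitfall I want to guard against is misreading the destination column in rule (iii): sub-case (A) places its new entry in $\textbf{C}_k(s+1)^{s+1}$ rather than $\textbf{C}_k(s+1)^s$, so (iii)(A) by itself does not supply the required entry for the ``if'' direction at $t=s$. This is harmless for the plan because case (A) already presupposes a $\ast$ in $\textbf{C}_k(s)^s$, a situation that (ii) independently handles; thus (ii) together with (iii)(B) cover every way in which $\textbf{C}_k(s)^s$ can fail to be empty, which is all one needs.
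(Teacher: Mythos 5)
Your proposal is correct and follows the same route as the paper: rule (i) for $t<s$, rule (v) for the ``only if'' direction at $t=s$, and rules (ii) and (iii)(B) for the ``if'' direction (argued contrapositively). Your explicit remark that (iii)(A) deposits its entry in column $s+1$ and is already subsumed by (ii) makes the case analysis cleaner than the paper's own terse (and slightly garbled) wording, but the substance is identical.
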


\subsubsection {}\label {3.6.3}

\begin {lemma}

\

(i). Take $t>|S|$. Then $\textbf{C}_k(s)^{t+1}=\phi$, for all $s \in \mathbb N^+$.

\

(ii). Take $t\leq |S|$. Then $\textbf{C}_k(s)^t \neq \phi$, for all $s \in \mathbb N^+:s \geq t$.

\end {lemma}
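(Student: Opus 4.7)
The plan is to leverage the local transition rules of \ref{3.4.2} together with Lemma \ref{3.6.2} in order to reduce both assertions to a small number of explicit base cases.

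For (i), the column $\textbf{C}_k(s)^{t+1}$ does not exist when $s<t+1$ and is then vacuously empty, so by Lemma \ref{3.6.2} it suffices to prove $\textbf{C}_k(t+1)^{t+1}=\phi$ for every $t>|S|$. Iterating rule \ref{3.4.2}(v), which propagates emptiness of the last column of $\textbf{C}_k(s)$ to both of the last two columns of $\textbf{C}_k(s+1)$, together with Lemma \ref{3.6.2} applied column by column, further reduces the claim to the single base identity $\textbf{C}_k(|S|+2)^{|S|+2}=\phi$. This is verified by running through the rules of \ref{3.4.2} at the transition $s=|S|+1\to s+1=|S|+2$: a direct $\ast$ would require a column of $S$ of height $|S|+2$; rule (iii)(A) would require a $\ast$ in $\textbf{C}_k(|S|+1)^{|S|+1}$, hence a column of $S$ of height $|S|+1$; and the second clause of rule (iii)(B) together with rule (iv) each require either a column of $S$ of height $|S|+2$ or that $S$ meet $R_{|S|+2}$. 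Every such condition contradicts the maximality of $|S|$.

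For (ii), Lemma \ref{3.6.2} reduces the statement to $\textbf{C}_k(t)^{t}\neq\phi$ for each $1\leq t\leq|S|$. If $S$ has a column of height exactly $t$, then that column paired with $C_k(t)$ is a neighbouring pair of height $t$, so by \ref{3.3.2} a $\ast$ is deposited in the rightmost column of the corresponding column block, namely $\textbf{C}_k(t)^{t}$. The substantive case is that $S$ has no column of height $t$, in which case $|S|>t$ and $S$ admits some column of height strictly greater than $t$. I argue by induction on $t$, the base $t=1$ being given by \ref{3.4.1}(a),(b). In the inductive step I split further according to whether $S$ contains a column of height $t-1$. If it does, then $\textbf{C}_k(t-1)^{t-1}$ contains both a $\ast$ from the neighbouring pair of height $t-1$ and a $1$ produced by \cite[5.4.7(i)]{FJ2} applied to the right extremal box of $S\cap R_{t}$, which exists because $S$ has a column of height $>t-1$; rule \ref{3.4.2}(iii)(A) applied to the transition $s=t-1\to t$ then installs an entry in $\textbf{C}_k(t)^{t}$, and this entry is a $1$ rather than a $\ast$ since $S$ has no column of height $t$. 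If $S$ has no column of height $t-1$, then $\textbf{C}_k(t-1)^{t-1}$ has no $\ast$, and the inductive hypothesis (applicable because $t-1\leq|S|$) forces it to contain a $1$; the second clause of rule \ref{3.4.2}(iii)(B), triggered by $S$ meeting $R_{t}$ but admitting no column of height $t$, then deposits a $1$ in $\textbf{C}_k(t)^{t}$.

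The main obstacle is the auxiliary claim, used in the first subcase above, that $\textbf{C}_k(u)^{u}$ always contains a $1$ whenever $S$ has both a column of height $u$ and a column of height strictly greater than $u$. This does not follow from the local transition rules of \ref{3.4.2} alone, and instead requires appeal to the global construction of \cite[5.4.7(i)]{FJ2}, which at each stage $u$ produces a line joining the right extremal box of $S\cap R_{u+1}$ to $b_{u,k}$ and certifies that this line carries a $1$. Once that input is granted, the inductive scheme above closes without further surprise.
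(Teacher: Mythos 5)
Your proof is correct, but it takes a genuinely different route from the paper's, most visibly in part (ii). For (i) the paper argues globally: it shows $\textbf{C}_k(|S|+m+2)^{|S|+m+2}=\phi$ for \emph{every} $m\geq 0$ at once by observing that a non-empty last column would give a line $\ell_{b,b_{(|S|+m+2),k}}$ with $b\in S$, whence the Column Existence Lemma \ref{3.6.1} would force $S$ to contain a column of height $|S|+m+1>|S|$; it then descends with Lemma \ref{3.6.2}. You instead collapse everything to the single base identity $\textbf{C}_k(|S|+2)^{|S|+2}=\phi$ via rule \ref{3.4.2}(v) and verify it by exhausting the transition rules (iii)A, (iii)B and (iv); this is sound, since each of those rules requires $S$ to meet $R_{|S|+1}$ or $R_{|S|+2}$, contradicting the maximality of $|S|$, but it never invokes Lemma \ref{3.6.1}. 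For (ii) the paper's argument is a one-step global one: adjoin (if necessary) a column $C'$ of height $|S|$ on the right, so that $C_k(s)$ sits between a neighbouring pair of columns of height $|S|$, and then \cite[5.4.8(v)]{FJ2} guarantees every box of $R^{|S|}\cap C_k(s)$ a left-going line. Your induction on $t$, with the case split on whether $S$ has columns of heights $t$ and $t-1$ and the appeal to \ref{3.4.2}(iii)A and (iii)B, is a legitimate substitute and has the merit of staying entirely inside the local transition calculus of \ref{3.4.2}; the auxiliary input you flag (that both a $\ast$ and a $1$ sit in $\textbf{C}_k(u)^u$ when $S$ has columns of height $u$ and of height $>u$) is exactly the parenthetical assertion built into rule (iii)A, so it is available without further work. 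The trade-off is brevity versus self-containment: the paper's proof is shorter but leans on \cite[5.4.8(v)]{FJ2} and the adjunction trick of Lemma \ref{3.2.2}, while yours is longer but makes the dependence on the stabilisation rules explicit.
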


\begin {proof}

Consider (i).

Take $m$ to be an integer $\geq 0$. Recall (\ref {3.3}) that $b_{(|S|+m+2),k}$ is the lowest box in the last column $C_k(|S|+m+2)$ deemed to have height $|S|+m+2$.

If the last column of $\textbf{C}_k(|S|+m+2):m\geq 0$ is not empty, then there is a line $\ell_{b,b_{(|S|+m+2),k}}$ for some $b \in S$. Then by Lemma \ref {3.6.1}, there is a column of height $|S|+m+1$ in $S$. This contradiction proves that $\textbf{C}_k(|S|+m+2)^{|S|+m+2}=\phi$ for all $m \in \mathbb N$.

 Applying Lemma \ref {3.6.2}, we conclude that $\textbf{C}_k(s)^{|S|+m+2}=\phi$, for all $s\geq |S|+m+2$, whilst for $s<|S|+m+2$, the assertion is trivial. Finally the condition $t>|S|$, means that we can write $t=|S|+m+1$, for some $m \in \mathbb N$, whilst we have just shown that $\textbf{C}_k(s)^{|S|+m+2}=\phi$, for all $s \in \mathbb N$. Hence (i).

Consider (ii).

By definition $S$ has a column $C$ of height $|S|$. Choose that nearest to the right. If $s = |S|$, set $C'=C_k(s)$. Otherwise adjoin a column $C'$ of height $|S|$ to $\mathscr D$, on its right. By Lemma \ref {3.2.2} this does not change the left going lines from $C_k(s)$, nor their labels.

 Take $t\leq |S|$. Since $C_k(s)$ lies between $C,C'$ every box in $R^{|S|} \cap C_k(s)$ has a left going line by \cite [5.4.8(v)]{FJ2}. One may remark that they all carry a $1$ except if this line is left going from $C_k(s)^t: s=t=|S|$.

 Consequently $\textbf{C}_k(s)^t \neq \phi$ if $s\geq t$ and $t\leq |S|$, as required.


\end {proof}

\textbf{Remark.} Take $s\in \mathbb N^+$. Then $\textbf{C}_k(s)^{|S|+1}$ may or may not be empty.

\subsubsection {}\label {3.6.4}

\begin {lemma} For all $s>|S|$, the column block $\textbf{C}_k(s+1)$ obtains from $\textbf{C}_k(s)$ by adjoining an empty column on the right.
\end {lemma}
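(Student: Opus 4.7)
The plan is to combine the detailed transition rules of Lemma~\ref{3.4.2}, which govern how the entries of $\textbf{C}_k(s)$ change when the height of the last column is increased by one, with the emptiness statement of Lemma~\ref{3.6.3}(i). The key observation is that since $s>|S|$, the set $S$ contains neither a column of height $s$ nor one of height $s+1$; this immediately kills most of the possible sources of a new entry in $\textbf{C}_k(s+1)$.

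First I would dispatch the columns indexed by $t\in[1,s-1]$: Lemma~\ref{3.4.2}(i) gives $\textbf{C}_k(s+1)^t=\textbf{C}_k(s)^t$ with no further work. Next, for $t=s$: rule (ii) of \ref{3.4.2} requires $S$ to have a column of height $s$, and rule (iii)A requires a $\ast$ in $\textbf{C}_k(s)^s$, which (as recalled in \ref{3.4.1} and \ref{3.3.2}) also demands a column of height $s$ in $S$. Both conditions fail under the hypothesis $s>|S|$. This leaves only rule (iii)B, whose first clause asserts that a $1$ already present in $\textbf{C}_k(s)^s$ reappears in $\textbf{C}_k(s+1)^s$ in the same row; its second clause, which would introduce a $\ast$ or $1$ in the new column, requires $S$ to admit a column of height $s+1$ and so also fails. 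Hence $\textbf{C}_k(s+1)^s=\textbf{C}_k(s)^s$.

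Finally, for the newly introduced column $t=s+1$: applying Lemma~\ref{3.6.3}(i) with its parameter $t$ equal to our $s>|S|$ yields $\textbf{C}_k(s')^{s+1}=\phi$ for every $s'\in\mathbb N^+$, and in particular $\textbf{C}_k(s+1)^{s+1}=\phi$. It remains only to observe that rule (iv) of \ref{3.4.2} cannot surreptitiously reintroduce a $1$ there, since its very hypothesis is that $S$ contains a column of height $s+1$, once again excluded. Combining the three parts yields the description of $\textbf{C}_k(s+1)$ as $\textbf{C}_k(s)$ with an empty column appended on the right.

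The main obstacle is purely bookkeeping: rules (i)--(v) of Lemma~\ref{3.4.2} together with Lemma~\ref{3.6.3}(i) exhaust all the ways in which entries of the last two columns of $\textbf{C}_k$ can be modified under $s\mapsto s+1$, so one must check carefully that the hypothesis $s>|S|$ rules out every case except the benign transfer supplied by (iii)B. Once this census is complete the lemma follows immediately.
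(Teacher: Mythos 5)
Your proof is correct and follows essentially the same route as the paper: rule (i) of \ref{3.4.2} for the columns $t<s$, rule (iii)B for $t=s$ (using $s>|S|$ to exclude (ii), (iii)A and (iv)), and Lemma \ref{3.6.3}(i) for the new column $t=s+1$. The only point the paper makes explicit that you pass over is the sub-case where $\textbf{C}_k(s)^s$ is already empty, for which (iii)B is vacuous and one invokes rule (v) of \ref{3.4.2} to conclude that $\textbf{C}_k(s+1)^s$ is empty as well.
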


\begin {proof} By \ref {3.4.2}(i) one has $\textbf{C}_k(s+1)^u=\textbf{C}_k(s)^u$ for $u<s$. For $u=s$, this equality again holds by \ref {3.4.2}(v) if the right hand side is empty. Otherwise for $s>|S|$ it holds by \ref {3.4.2}(iii)B.

Finally by Lemma \ref {3.6.3}(i), one has $\textbf{C}_k(s+1)^{s+1}= \phi$, for all $s >|S|$.

\end {proof}

\subsubsection {}\label {3.6.5}

\begin {cor}

\

%

$(i)$. There exists an ``asymptotic'' last column block $\textbf{C}_k(\infty)$ which obtains from $\textbf{C}_k(|S|+1)$ by adjoining an infinite block of empty columns on the right.

\

(ii) $\textbf{C}_k(\infty)^t\neq \phi$, for all $t\leq |S|$ and has a $1$ in some unique row, $\textbf{r}_t$, which are moreover pairwise distinct.

\
\end {cor}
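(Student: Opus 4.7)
The plan is to assemble the corollary directly from the stabilization facts already established in \ref{3.6.2}--\ref{3.6.4}, together with the structural bookkeeping of \ref{3.3.2} and \ref{3.4.1}.

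For part (i), I would iterate Lemma \ref{3.6.4}: starting from the base height $s=|S|+1$, each step $s\mapsto s+1$ simply adjoins a single empty column on the right without altering any previously present entry of $\textbf{C}_k(s)$. Consequently, for every fixed $t$ the column $\textbf{C}_k(s)^t$ is eventually independent of $s$, so I define $\textbf{C}_k(\infty)^t$ to be this stable value. Assembling over $t$ yields an infinite block that agrees with $\textbf{C}_k(|S|+1)$ in its first $|S|+1$ columns and has an infinite tail of empty columns, which is exactly what (i) asserts.

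For part (ii), fix $t\leq |S|$. Applying Lemma \ref{3.6.3}(ii) at $s=|S|+1\geq t$ gives $\textbf{C}_k(|S|+1)^t\neq \phi$, and by the stabilization just established $\textbf{C}_k(\infty)^t\neq \phi$. Since $t\leq |S|<|S|+1$, the column $\textbf{C}_k(|S|+1)^t$ is strictly to the left of the rightmost column of the block, so by the description in \ref{3.3.2} it cannot carry a $\ast$; being non-empty it therefore contains a $1$, and by the ``at most one $1$ per column'' clause recalled in \ref{3.4.1} this $1$ occupies a unique row of $\textbf{M}$, which I denote $\textbf{r}_t$. For pairwise distinctness as $t$ varies over $[1,|S|]$, I invoke the companion clause of the same citation: each row of $\textbf{C}_k(|S|+1)$ contains at most one $1$. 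Hence two distinct columns must carry their $1$s in two distinct rows, giving $\textbf{r}_t\neq \textbf{r}_{t'}$ for $t\neq t'$.

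I do not anticipate a serious obstacle: the heavy lifting, namely the column-existence Lemma \ref{3.6.1} and the stabilization Lemma \ref{3.6.4}, is already in place, and the corollary is essentially a bookkeeping consequence of them. The only point requiring care is invoking \ref{3.3.2} to rule out a $\ast$ in any non-rightmost column of the block, since without that step non-emptiness alone would not guarantee the presence of a $1$.
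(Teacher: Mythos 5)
Your proposal is correct and follows essentially the same route as the paper: (i) by iterating the stabilization Lemma \ref{3.6.4} from height $|S|+1$, and (ii) by combining Lemma \ref{3.6.3}(ii) with the fact that a $\ast$ can only occupy the rightmost column of $\textbf{C}_k(s)$ and that a $1$ occurs at most once in any row or column of the block. The only cosmetic difference is that the paper rules out a $\ast$ by noting $S$ has no column of height $>|S|$, whereas you rule it out positionally via \ref{3.3.2}; both are valid.
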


\begin {proof}

\end{proof}

(i). For all $l\geq |S|$, the last column block $\textbf{C}_k(l+1)$ obtains from $\textbf{C}_k(|S|+1)$ by adjoining $l-|S|$ empty columns on the right. Hence the assertion.

(ii). The first part follows from Lemma \ref {3.6.3}(ii). Since $S$ has no column of height $>|S|$, this non-trivial entry can only be a $1$. Yet by \cite [5.4.8(vi)]{FJ2} a $1$ can occur at most once on any given column (or row|) of $\textbf{C}$, for any column $C$ of $\mathscr D$. The first part gives the uniqueness of $\textbf{r}_t$, the second part that they are pairwise distinct.

\textbf{Remark.} Following the remark of \ref {3.6.3}, we note that
$\textbf{C}_k(\infty)^{|S|+1}$ need not be empty and then has a $1$ in some row $\textbf{r}_{|S|+1}$. Otherwise we set $\textbf{r}_{|S|+1}=0$.


 \subsubsection {}\label {3.6.6}

 The above result defines a map $S \mapsto \{\textbf{r}_t\}_{t=1}^{|S|+1}$ from compositions to the set of all pairwise distinct $|S|+1$ non-negative integers. We call it the composition map.


%
%

 \subsubsection {}\label {3.6.7}

 One might hope that the image of composition map determines the last column block $\textbf{C}_k(s)$ for all $s \in \mathbb N$. By definition it determines $\textbf{C}_k(s)$ for all $s|s >|S|$.

 To do this one should show that $\textbf{C}_k(s+1)$ determines $\textbf{C}_k(s)$. Yet this fails as the following example shows.

\

 \textbf{Example.}

 Consider the compositions $(2,1,1,3); (1,1,1,1,3)$ labelling the last column by $C_k(s+1):s=2$.

 In both cases the last column block $\textbf{C}_k(s+1)$ has a $1$ at co-ordinates $x_{4,5},x_{3,6}$ and its last column is empty.

 Now replace $s+1$ by $s$. In the first case the $x_{3,6}$ acquires a $\ast$, whilst in the second case there is no change.



\subsubsection {}\label {3.6.8}

Yet we can realise the goal of \ref {3.6.7} by including information on column heights in $S$. This is described in the lemma below using the following notation.

 Let $\mathscr S$ denote the set of heights of $S$ counted \textit{without} multiplicities.

 Let $\textbf{r}_s$ be defined through the composition map (\ref {3.6.6}).

 Recall \ref {3.6.5}. In particular $\textbf{r}_{|S|+1}$ may or may not be empty.

%

 \begin {lemma}

 \
Take $s \leq |S|+1$.

 \

 (i). Suppose $s \in \mathscr S$. Then $\ast$ occurs in $\textbf{C}_k(s)^s$ in row $\textbf{r}_{s}$. Moreover $1$ occurs in $\textbf{C}_k(s)^s$ in row $\textbf{r}_{s+1}$, if and only if the latter is non-empty.


 \

 (ii). Suppose $s \notin \mathscr S$. Then $1$ occurs in $\textbf{C}_k(s)^s$ in row $\textbf{r}_{s}$.

 \

 In particular the image of the composition map and $\mathscr S$ determine the last column block $\textbf{C}_k(s)$, for all $s \in \mathbb N^+$.

 \end {lemma}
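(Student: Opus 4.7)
The plan is to argue by downward induction on $s$ running from $|S|+1$ down to $1$, using rules \ref{3.4.2}(i)--(iv) as the workhorse and Corollary \ref{3.6.5} as the boundary data. As a preliminary I would iterate rule \ref{3.4.2}(i) to identify $\textbf{C}_k(s)^t=\textbf{C}_k(t+1)^t=\textbf{C}_k(\infty)^t$ for every $t<s$, so by Corollary \ref{3.6.5}(ii) this column carries a single $1$ in row $\textbf{r}_t$ when $t\leq|S|$, and for $t=|S|+1$ either a $1$ in row $\textbf{r}_{|S|+1}$ or is empty. Consequently the only genuine task is to pin down the last column $\textbf{C}_k(s)^s$ for each $s\leq|S|+1$.

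The base case $s=|S|+1$ is immediate since $|S|+1\notin\mathscr S$, so case (ii) applies and the preliminary identifies $\textbf{C}_k(|S|+1)^{|S|+1}$ with $\textbf{C}_k(\infty)^{|S|+1}$. For the inductive step I would assume the lemma at height $s+1$ and compare $\textbf{C}_k(s)^s$ with the two neighbouring columns $\textbf{C}_k(s+1)^s=\textbf{C}_k(\infty)^s$ and $\textbf{C}_k(s+1)^{s+1}$ via the rules. If $s\in\mathscr S$, a unique $\ast$ is present in $\textbf{C}_k(s)^s$ by \ref{3.3.2}, and rule (ii) turns it into the $1$ of $\textbf{C}_k(s+1)^s$ in the same row, pinning the $\ast$ to row $\textbf{r}_s$; when additionally $s<|S|$, rule (iii)A shifts the accompanying $1$ of $\textbf{C}_k(s)^s$ into $\textbf{C}_k(s+1)^{s+1}$ at the same row, which by the inductive hypothesis equals $\textbf{r}_{s+1}$. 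If $s\notin\mathscr S$, there is no $\ast$ in $\textbf{C}_k(s)^s$ and (iii)B transports the $1$ unchanged into $\textbf{C}_k(s+1)^s$, so matching with $\textbf{C}_k(\infty)^s$ simultaneously gives existence of a $1$ in $\textbf{C}_k(s)^s$ and places it in row $\textbf{r}_s$.

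The main obstacle is the boundary case $s=|S|$ of (i), where the reduction via (iii)A breaks because $S$ admits no column of height exceeding $s$. Here the $1$ of $\textbf{C}_k(|S|)^{|S|}$, which is asserted to be present exactly when $\textbf{r}_{|S|+1}>0$, cannot originate from (iii)A and must instead be produced by rule \ref{3.4.2}(iv) at the earlier transition $|S|-1\to|S|$; the iff statement then demands that the row chosen by (iv) coincides with $\textbf{r}_{|S|+1}$. Verifying this requires careful tracking of the ``equivalence class'' mechanism in (iv), using the definition of $\mathscr R$ together with the action of the Levi factor on rows, and matching the row so chosen to the unique $1$ of the stable last column $\textbf{C}_k(\infty)^{|S|+1}$. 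The analysis splits on whether $|S|-1\in\mathscr S$, and would re-use the flavour of the examples in \ref{3.5} showing that rule (iv) places a $1$ in the row beneath the empty rows up to the action of the Levi.

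Finally the ``In particular'' clause is assembled mechanically: for $t<s$, $\textbf{C}_k(s)^t$ is determined by $\textbf{C}_k(\infty)^t$ and hence by the image of the composition map via the preliminary; for $t=s$, cases (i) and (ii) describe it from $\mathscr S$ together with $\{\textbf{r}_t,\textbf{r}_{s+1}\}$; and for $t>|S|+1$ it is empty by Lemma \ref{3.6.3}(i). Hence the image of the composition map together with $\mathscr S$ reconstructs $\textbf{C}_k(s)$ for all $s\in\mathbb N^+$.
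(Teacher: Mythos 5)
Your core mechanism is the same as the paper's: every entry of $\textbf{C}_k(s)^s$ is pushed forward by the rules of \ref{3.4.2} into the stabilised block $\textbf{C}_k(\infty)$, whose rows are by definition the $\textbf{r}_t$; the $\ast$ lands (as a $1$) in $\textbf{C}_k(\infty)^s$, hence in row $\textbf{r}_s$, and an accompanying $1$ is shifted by (iii)A into $\textbf{C}_k(s+1)^{s+1}$ and thence into $\textbf{C}_k(\infty)^{s+1}$, hence row $\textbf{r}_{s+1}$. The paper phrases this as a forward consistency check ("the stated entries at height $s$ produce the stated entries at height $s+1$") rather than your downward induction, but that is a difference of bookkeeping, not of substance. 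Your preliminary identification $\textbf{C}_k(s)^t=\textbf{C}_k(\infty)^t$ for $t<s$ together with Corollary \ref{3.6.5}(ii) is exactly how the paper pins down the rows (note only that for your base case $t=s=|S|+1$ the freezing comes from Lemma \ref{3.6.4}, not from rule \ref{3.4.2}(i)).

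The genuine gap is that your self-declared ``main obstacle'' is announced but never closed: for $s=|S|$ you assert that the $1$ of $\textbf{C}_k(|S|)^{|S|}$ must be created by rule (iv) at the transition $|S|-1\to|S|$ and that one must then match the row selected by the $\mathscr R$/Levi mechanism against $\textbf{r}_{|S|+1}$ --- and you leave this matching as a programme (``requires careful tracking\dots would re-use the flavour of the examples''). That is precisely the hardest point and it is not done. It is also not the route the paper takes: the paper never unwinds rule (iv) backwards. Its argument for the ``moreover'' clause is uniform in $s$: \emph{if} a $1$ is present in $\textbf{C}_k(s)^s$ alongside the $\ast$, rule (iii)A (applied as a transition rule, irrespective of its parenthetical provenance remark) carries it into $\textbf{C}_k(s+1)^{s+1}$ and onward to $\textbf{C}_k(\infty)^{s+1}$ in the same row, which forces that row to be $\textbf{r}_{s+1}$ and in particular forces $\textbf{r}_{s+1}\neq\phi$; conversely, if $\textbf{r}_{s+1}=\phi$ this propagation would be contradictory, so no such $1$ can be present. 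So the row identification never requires knowing where the $1$ came from, only where it goes. What your plan correctly senses, but does not resolve, is the remaining implication ``$\textbf{r}_{s+1}\neq\phi\Rightarrow$ a $1$ occurs in $\textbf{C}_k(s)^s$'': the nonempty entry of $\textbf{C}_k(\infty)^{s+1}$ could a priori be injected by rule (iv) at a later transition rather than propagated from $\textbf{C}_k(s)^s$, and neither your backward-tracing sketch nor any other part of your argument rules this out. As written, then, the proposal establishes part (ii), the first part of (i), and one direction of the ``if and only if'', but not the case you yourself isolate as the crux.
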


 \begin {proof} By \ref {3.4.2}(i), it is enough to show that the conclusions give the correct entries for $\textbf{C}_k(s+1)^t:t=s,s+1$ as described in \ref {3.4.2}.

 Consider (i).

 By {3.4.2}(ii) a $\ast$ in $\textbf{C}_k(s)^s$ gives a $1$ in $\textbf{C}_k(s+1)^s$ in the same row. Then by {3.4.2}(i), this entry remains in $\textbf{C}_k(\infty)^s$ again in the same row but becoming or staying a $1$. It therefore must belong to $\textbf{r}_s$, by definition of the latter. This gives the first part of (i).

 If a $1$ also occurs in $\textbf{C}_k(s)^s$, then \ref {3.4.2}(iii)A gives a $\ast$ (resp. a $1$) in $\textbf{C}_k(s+1)^{s+1}$ if $s+1 \in \mathscr S$ (resp. $s+1 \notin \mathscr S$), in the same row. In the first case by \ref {3.4.2}(i,ii), this entry remains in $\textbf{C}_k(\infty)^{s+1}$ again in the same row but becoming a $1$. It therefore must belong to $\textbf{r}_{s+1}$, by definition of the latter. Notice by \ref {3.6.3}(ii), $s+1 \in \mathscr S$ implies
 $\textbf{r}_{s+1}\neq \phi$.

 In the second case a $1$ appears in $\textbf{C}_k(s+1)^{s+1}$. Then by \ref {3.4.2}(i) and \ref {3.4.2}(iii)B, this $1$ remains in $\textbf{C}_k(\infty)^{s+1}$ again in the same row, thus in $\textbf{r}_{s+1}$ by definition of the latter. However in this second case it can happen that
 $\textbf{r}_{s+1} =\phi$. This gives a contradiction which implies that $1$ does not occur in $\textbf{C}_k(s)^s$.

 Similarly (ii) follows from {3.4.2}(iii)B.

 \end {proof}

 \textbf{Remarks.} Noting that in the example of \ref {3.6.7} that $S$ has no column of height $>s=2$, one may check that the conclusion of the lemma is in accordance with this example. Again consider the composition $(2,1,1,s)$. In this case $\textbf{r}_1=\textbf{4}, \textbf{r}_2=\textbf{3}, \textbf{r}_1=\phi$. Correspondingly $1,\ast$ occur in $\textbf{C}_4(1)^1$, whilst only $\ast$ occurs in $\textbf{C}_4(2)^2$.

 Considering the complexity of the rules in \ref {3.4.2}, the assertion of Lemma \ref {3.6.8} is extraordinarily simple.

 Clearly the composition map is of independent interest.

\section {Regularity} \label {4}

We continue to take $e+V$ to be the canonical Weierstrass section constructed in \cite [5.4]{FJ2} and described rather precisely in \ref {3.4} above.

We would have liked a solution which starting from the ordered set of heights of the columns directly gives the lines determining $e,V$.

This solution described in \ref {3.5} is perhaps not ideal, but it does behave well under adjoining of columns on the right Lemma \ref {3.2.2} and increasing the height of the last column. Then by Lemma \ref {3.6.8}, the solution is completely determined by the images of the composition maps and the set of column heights.

By \cite [Lemma 6.1]{FJ2}, $e$ belongs to the nilfibre $\mathscr N$.

We now would like to show that $e\in \mathscr N_{reg}$. Unfortunately this can fail because $e$ may give rise in VS pairs. This is explained in \ref {4.3} and \ref {4.4}.

\subsection {Pseudo-regularity} \label {4.1}

Recall that e is a sum of co-ordinate vectors $\{e_i\}_{1=1}^r$, none of which lie on the same row or same column of \textbf{M}. In particular the roots of the $\{e_i\}_{1=1}^r$ are linearly independent. Let $E$ denote the corresponding sum of root subspaces. As an immediate consequence of linear independence
$$\mathfrak h.e=E.\eqno{(1)}$$
Recall that $e\in \mathscr N$. Then via \cite [Lemma 3.1(ii)]{FJ2} the property that $e$ is regular, is equivalent to
$$\mathfrak p.e \oplus V= \mathfrak m.\eqno{(2)}$$
Consider $\sum_{i=1}^r \mathfrak p.e_i$. Obviously
$$\mathfrak p.E:= \sum_{i=1}^r \mathfrak p.e_i \supset \mathfrak p.e \eqno {(4)}$$
As we shall see the inclusion may be strict.

\

\textbf{Definition} $e$ is said to pseudo-regular if
$$\mathfrak p.E+V =\mathfrak m.\eqno{(5)}$$

\

It should be carefully noted that this is different to $e$ being quasi-regular. Indeed from the definition of pseudo-regularity it follows that $\dim \mathfrak p.E \geq \dim \mathfrak m -\dim V$ and this inequality can be strict. On the other hand quasi-regularity means that $\overline {P.E}=\mathscr N^c$ and as we shall see (Lemma \ref {4.5.3}(ii)) it implies that $\dim P.E =\dim \mathfrak m -\dim V$. Quasi-regularity generally fails and to recover it we must augment $E$ to a larger subspace $E_{VS}$ to be constructed in \ref {4.4.4} below.

\

Pseudo-regularity may be visualized as follows.

Consider a chess board with squares being the co-ordinate places of $\mathfrak m$ with those of $V$ removed. Partition this board by vertical (resp. horizontal) lines extending the sides of the blocks.

Consider $e$ as a set of rooks or castles\footnote{Apparently this strange dual terminology came about from a confusion of Rukh (Chariot in Persian) with Rocca (Italian for fortress).} at each co-ordinate place $e_i:i=1,2,\ldots,s$. Allow each rook to move leftwards not crossing a vertical line, arbitrarily rightwards and similarly downwards not crossing a horizontal line and arbitrarily upwards.

Then pseudo-regularity means that the set of rooks cover every square on this chess-board. In this we may note that, unlike chess, the rooks cover the places at which they stand through the action of $\mathfrak h$. This can fail if we adjoin rooks and the corresponding root vectors are no longer linearly independent. This is the source of the failure of $\mathscr C$ to admit a dense orbit \cite [Lemma 6.10.7]{FJ2}.

Below an example is given when the rooks also cover some elements of $V$. This contradicts \cite [3.1]{FJ2} and arises because $(4)$ is in general a strict inclusion. As we shall see this is caused by VS pairs (\ref {4.3} and \ref {4.4}).

\textbf{Example.} Consider the nilradical $\mathfrak m$ given by the composition $(1,2,2,1)$. Then our procedure \cite [5.4]{FJ2} gives $e=x_{1,2}+x_{2,4}+x_{5,6}$, whilst $V=\mathbb C x_{3,5}+\mathbb C x_{4,6}$. Thus the rooks cover all the squares in $\mathfrak m$ excepting the co-ordinate $(3,5)$ which contradicts \cite [3.1(ii)]{FJ2}. Yet we cannot simultaneously obtain $x_{2,5}$ and $x_{4,6}$ through the action of $x_{4,5}$ since it would have to be used twice. Moreover we recover pseudo-regularity since $x_{4,5}\in V$.

\subsection {Pseudo-regularity} \label {4.2}

\subsubsection {}\label {4.2.1}

\begin {prop} $e$ is pseudo-regular.
\end {prop}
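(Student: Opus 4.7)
The plan is to verify directly the chess-board covering described at the end of \ref{4.1}: for every coordinate position $(i,j)$ of $\mathfrak m$, either $x_{i,j}\in V$ (the entry carries a $\ast$) or $x_{i,j}\in\mathfrak p.e_k$ for some rook $e_k=x_{p,q}$. The latter amounts to saying that $(i,j)$ lies in the rook orbit of $(p,q)$: row $p$ extended rightwards without restriction and leftwards within the Levi column-block of $q$, together with column $q$ extended upwards without restriction and downwards within the Levi row-block of $p$. Since $\mathfrak p.E=\sum_k\mathfrak p.e_k$, pseudo-regularity is precisely the statement that the union of these rook orbits together with $V$ exhausts $\mathfrak m$.

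The strategy is induction on the number of columns, exploiting the two stability results of Section \ref{3}: Lemma \ref{3.2.2} (adjoining a column on the right) and the propagation rules of \ref{3.4.2} (raising the height of the last column). The base case $\mathfrak m=0$ is vacuous. In the inductive step, assume pseudo-regularity for the truncation $\mathfrak m^\kappa$ of \ref{3.3.1}; by Lemma \ref{3.2.2} the entries inside $\textbf S$ are unchanged, so $\mathfrak m^\kappa$ is already covered by rooks in $\textbf S$ and by $\ast$-entries in $V^\kappa$, and any such rook extends rightwards along its row of $\textbf M$ into the new column block $\textbf C_k$. What remains is to verify that every unmarked square of $\textbf C_k$ is covered, either by this rightward extension from $\textbf S$ or by the Levi action on a $1$ or $\ast$ lying inside $\textbf C_k$ itself.

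For this last step I would appeal to the explicit description in Lemma \ref{3.6.8}: every non-empty subcolumn $\textbf C_k(s)^t$ carries a single $1$ or $\ast$, in a row $\textbf r_t$ prescribed by the composition map (possibly with a companion $1$ in $\textbf r_{t+1}$). A $1$ at $(\textbf r,j)$ covers the remaining portion of column $j$ lying in the Levi row-block through $\textbf r$ (together with everything above), while a $\ast$ contributes directly to $V$. The only rows of $\textbf C_k$ that might still escape coverage after combining these with rightward extensions from $\textbf S$ are precisely those belonging to the set $\mathscr R$ of \ref{3.4.2}(iv); and clause (iv) places a $1$ in the last subcolumn of $\textbf C_k(s+1)$ at a row of the Levi equivalence class of the lowest such row, so that the Levi action in the last row-block reaches the remaining squares. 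Compatibility of the coverage with the passage from $s$ to $s+1$ then follows by running through the cases (i)--(v) of \ref{3.4.2} in parallel with the rook-orbit description.

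The main obstacle, and the reason we need $\mathfrak p.E$ rather than $\mathfrak p.e$, is the VS-pair phenomenon illustrated by the $(1,2,2,1)$ example at the end of \ref{4.1}: two rooks $e_i,e_j$ may share a ``cross'' with a single $x_{a,b}\in\mathfrak p$ acting nontrivially on both, so that in $\mathfrak p.e$ only the coupled combination $[x_{a,b},e_i]+[x_{a,b},e_j]$ appears; passing to $\mathfrak p.E$ uncouples these and recovers the missing square (the one that would otherwise contradict \cite [3.1]{FJ2}). The core of the argument is therefore the verification, inside the combinatorics of \ref{3.4.2} and Lemma \ref{3.6.8}, that whenever a square fails to be reached individually by a rook orbit it is either an entry of $V$ or the uncoupling supplied by $\mathfrak p.E$ produces a rook orbit reaching it; the delicate case is again clause (iv), which is exactly why the same case-analysis that underlies \ref{3.5} is what makes the proof tractable here.
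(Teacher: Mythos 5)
Your proposal is correct and follows essentially the same route as the paper: an outer induction on the number of columns via Lemma \ref{3.2.2}, reducing to covering the last column block $\textbf{C}_k$, which is then handled by an induction on its height $s$ through the rules (i)--(v) of \ref{3.4.2}, with clause (iv) (and the Levi action on its equivalence classes of rows) disposing of the final sub-column. The appeal to Lemma \ref{3.6.8} is an extra ingredient the paper does not need, but it does not change the substance of the argument.
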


\begin {proof} The proof is by induction on the number of columns. Define $e^\kappa \in \textbf{S}$ as in \ref {3.3.1}. Then we may write $e=e^\kappa+e_k$, with $e_k \in \textbf{C}_k$. Recalling Lemma \ref {3.2.2} we may assume by the induction hypothesis that $e^\kappa$ is pseudo-regular in \textbf{S}.

Thus we only need to show that the rooks coming from $e^\kappa$ and those coming from $e_k$, cover every square in $\textbf{C}_k$.

The proof of the latter is by induction on the height $s$ of $\textbf{C}_k(s)$.

Suppose $s=1$ and let $C_{k-1}$ have height $t$. If (a) of \ref {3.4.1} holds, then the last column is covered by the rook on the $(n-t)^{th}$ row, noting that the downward motion need not cross a horizontal line (though the upward motion may).

For (b), we apply (iv) of \ref {3.4.2} as in the general case below.

By (i) of \ref {3.4} the first $s-1$ columns of $\textbf{C}_k(s+1)$ are those of $\textbf{C}_k(s)$ and so by the induction hypothesis the rooks cover all the first $s-1$ columns of $\textbf{C}_k(s+1)$.

Let us show that this also hold for $\textbf{C}_k(s+1)^s$.

If this column has no $\ast$ then by $(iii)B$ of \ref {3.4}, this column coincides with $\textbf{C}_k(s)^s$ and so again by the induction hypothesis on last column height, the rooks cover all the entries of this column in $\textbf{C}_k(s+1)$.

If this column does have a $\ast$ then by (ii) of \ref {3.4}, it becomes a $1$ in the same place and by (iii) of \ref {3.4} all other changes concern only
$\textbf{C}_k(s+1)^{s+1}$. Thus we are again reduced to the previous case.

It thus remains to show that all the entries of $\textbf{C}_k(s+1)^{s+1}$
are covered. This follows easily from (iv) of \ref {3.4}.

\end {proof}

\subsubsection {A Comment}\label {4.2.2}

The following result is not used; but it indicates what holds for entries in the last column block.

Let $s$ be the height of $C_k$ and set $m= \min {(|S|,s-1)}$.

\begin {lemma} Each of the first $m$ columns of $\textbf{C}_k$ admits a $1$ occurring (of course) on the different rows.
\end {lemma}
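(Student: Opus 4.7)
The plan is to reduce the assertion to the explicit column-by-column description already developed in Section \ref{3.6}. The argument splits into three short steps: non-emptiness, identification of the non-empty entry as a $1$, and distinctness of the rows in which these $1$s sit.

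First, for each $t \in [1,m]$ I would observe that $t \leq |S|$ and $s \geq m+1 > t$, so Lemma \ref{3.6.3}(ii) directly gives $\textbf{C}_k(s)^t \neq \phi$. This handles non-emptiness of the $t$-th column, for every $t$ in range.

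Second, I would rule out that the non-empty entry could be a $\ast$. From \ref{3.3.2} (and the discussion at the beginning of \ref{3.6.2}) a $\ast$ can appear in $\textbf{C}_k(s)$ only in the rightmost column $\textbf{C}_k(s)^s$ of the block. Since $t \leq m \leq s-1 < s$, such a $\ast$ is excluded in $\textbf{C}_k(s)^t$, so the unique non-empty entry is a $1$. Uniqueness of this $1$ in any one column of $\textbf{C}_k(s)$ was already recorded in \ref{3.6.2}.

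For the third step I would identify the row of this $1$ using \ref{3.4.2}(i) to stabilise from $\textbf{C}_k(s)^t$ back to $\textbf{C}_k(t+1)^t$, and then trace how the entries of $\textbf{C}_k(t)^t$ described by Lemma \ref{3.6.8} evolve when the last-column height is raised from $t$ to $t+1$. If $t \in \mathscr{S}$, the $\ast$ that sat in row $\textbf{r}_t$ becomes a $1$ in the same row by \ref{3.4.2}(ii), while any $1$ previously at row $\textbf{r}_{t+1}$ migrates out into $\textbf{C}_k(t+1)^{t+1}$ via \ref{3.4.2}(iii)A. If $t \notin \mathscr{S}$, the $1$ at row $\textbf{r}_t$ remains in place by \ref{3.4.2}(iii)B. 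In both cases the unique $1$ of $\textbf{C}_k(s)^t$ lies in row $\textbf{r}_t$. Distinctness of the rows $\textbf{r}_1,\dots,\textbf{r}_m$ is then immediate from Corollary \ref{3.6.5}(ii), which already gives pairwise distinctness of $\textbf{r}_1,\dots,\textbf{r}_{|S|}$.

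The only delicate point I anticipate is the bookkeeping in the third step: one must make sure that nothing further is adjoined to column $t$ as $s$ continues to grow beyond $t+1$. Migrations under \ref{3.4.2}(iii)A deposit new entries only in the newly enlarged last column $\textbf{C}_k(s+1)^{s+1}$, and the rule \ref{3.4.2}(iv) likewise only places $1$s in that last column. Combined with the stabilisation \ref{3.4.2}(i), this confirms that the $1$ identified at row $\textbf{r}_t$ is indeed the unique entry of $\textbf{C}_k(s)^t$ for all $s \geq t+1$, and pairwise distinctness of the rows is inherited from Corollary \ref{3.6.5}(ii).
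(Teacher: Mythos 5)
Your first two steps are exactly the paper's proof: non-emptiness of $\textbf{C}_k(s)^t$ for $t\le m$ from Lemma \ref{3.6.3}(ii), and exclusion of $\ast$ because $\ast$ is confined to the rightmost column $\textbf{C}_k(s)^s$. For the distinctness of rows the paper writes only ``(of course)'' --- the short reason, already recorded in \ref{3.4.1} (citing [5.4.8(vi)] of FJ2), is that every row of $\textbf{C}_k$ carries at most one $1$, so the $1$s in distinct columns are automatically on distinct rows; your third step, which identifies each row as $\textbf{r}_t$ via Lemma \ref{3.6.8} and then invokes Corollary \ref{3.6.5}(ii) for pairwise distinctness, is correct and is essentially the ``more direct proof using \ref{3.4.2}'' the paper mentions as an alternative, but it deploys more machinery than the assertion requires.
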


\begin {proof} This follows from Lemma \ref {3.6.3}(ii) noting that just a $\ast$ may appear in $\textbf{C}_k(s)^s$. A more direct proof may also be given using \ref {3.4.2}.
%
%
%
%
%
%
%
%
\end {proof}

\textbf{Remark}. These $1$'s may be inserted sequentially in the columns going from left to right; but they still may not appear in the lowest possible column. For example consider the composition $(3,2,1,4)$. Then the $1$ in column $9$ occupies row $\textbf{3}$ instead of row $\textbf{4}$.

\subsection {Abstract VS Pairs} \label {4.3}

Here it is convenient to use $k$ as a dummy index for co-ordinates. Yet $k$ was also used as the label of the last column of $\mathscr D$ in \ref {3.2} and already in our previous paper [2.2] \cite {FJ2}. Although this should not cause any confusion, we introduced in \textbf{N.B.} of \ref {3.3.1} the device of replacing the $k$ superscript by $\kappa$ in reference to the last column of $\mathscr D$.

Following the work of Victoria Sevostyanova \cite {Se} a pair of
co-ordinates vectors $x_{i,j},x_{k,l}$ in $\mathfrak m$ is called a VS pair if $x_{j,k}$, lies in $\mathfrak n$ or is a root vector of the Levi factor. Notice here that we must already have $i<j,k<l$.

Given a VS pair, commutation of $x_{i,j}$ with $x_{j,k}$ gives $x_{i,k}$, which lies in the same row as $x_{i,j}$ whilst commutation of $x_{j,k}$ with $x_{k,l}$ gives $x_{j,l}$, which lies in the same column as $x_{k,l}$.

Consider the action of $\mathfrak p$. We cannot simultaneously obtain the entire row to the right of $x_{i,j}$ and the entire column above $x_{k,\ell}$ because the same element of $\mathfrak p$, namely $x_{j,k}$, is being used twice.

In fact this is exactly the source of the strict inclusion in $(4)$. Otherwise the desired regularity of $e$ would result from Proposition \ref {4.2.1}.

In the example of \ref {4.1},
both the co-ordinates $(2,5)$ and $(4,6)$ (where an element of $V$ lies) cannot be simultaneously covered. Of course the action of $\mathfrak p$ on $e$ gives a linear combination of these two vectors. Thus a cat zapped by rooks at $(2,4)$ and $(5,6)$ would end up like that of Schroedinger's cat, a linear combination of a live cat and a dead cat. This can get far more complicated and just imagine trying to play chess taking account of VS pairs.

A main result of our present paper is Theorem \ref {4.4.5}, which severely limits the number of VS pairs which need to be considered, those which we shall call ``bad''.

In this we shall not attempt a complete classification of bad pairs for the reasons given in Remark \ref {4.4.3}.

 \subsection {VS Pairs Associated to $e+V$} \label {4.4}

 \subsubsection {VS pairs relative to the canonical Weierstrass section.} \label {4.4.1}

 Recall \ref {3.1} and the notation used there.

 Not all abstract VS pairs are needed.

 \

\textbf{Definition.} A pair $x_{i,j},x_{k,l} \in \supp {e}$, with $j \neq k$, is called a VS pair associated to the Weierstrass section $e+V$ if neither $x_{i,k}$, nor $x_{j,l}$ belong to $\supp{V}$ and in
addition $x_{j,k}$ is a root vector of $\mathfrak p$, called the connecting element of the VS pair.

\

Notice the definition incorporates the fact that by \cite [3.1(ii)]{FJ2}, we cannot expect nor do we need to recover $V$.

\

In particular in the example of \ref {4.1} that we are \textbf{not} considering $x_{2,4},x_{5,6}$ to be a VS pair, since $x_{4,6} \in \supp V$.

\

Recall that $E=\sum_{x_{(i,j)} \in \supp e}\mathbb C x_{i,j}$ and Equation $(1)$.

\subsubsection {VS Quadruplets} \label {4.4.2}

If $C,C'$ are columns of $\mathscr T_\mathfrak m$ , we write $C<C'$ is $C$ lies strictly to the left of $C'$.

\textbf{Definition}. A VS pair $x_{i,j},x_{k,l}$ gives a quadruplet $(i,j,k,l)$, which we call a VS quadruplet if $x_{j,k} \in \supp {V}$.

Given $t \in \{1,2,\ldots,n\}$, let $b(t)$ be the box containing $t$ and $C_{(t)}$ the column containing $b(t)$.

Suppose $(i,j,k,l)$ is a quadruplet, then $C_{(i)}<C_{(j)}\leq C_{(k)}<C_{(l)}$, by definition of $e$.

 Let $c_{(t)}$ denote the height of $C_{(t)}$ for any $t\in \{i,j,k,l\}$.

Note that the condition $C_{(j)}=C_{(k)}$ means that $j,k$ are in the same column and so the connecting element is a root vector lying in the Levi factor $\mathfrak l$ of $\mathfrak p$.

As noted in \ref {4.3} we cannot obtain both $x_{i,k}$ and $x_{j,l}$ as elements of $\mathfrak p.e$ (only a linear combination).
Then for example, the co-ordinate subspace $E_{j,l}$ need not lie in $\mathfrak p.e +V$.

\subsubsection {Bad VS Pairs} \label {4.4.3}

We shall define the notion of a bad VS pair relative to $e+V$ inductively on the number of columns. In this we recall the notation and observation of \ref {3.3.1}.

As \ref {3.3.1}, let $S$ denote the (ordered) set of columns in $\mathscr D$ excluding its rightmost column $C$. We shall sometimes denote $C$ by $C(s)$ to specify its height $s$. We shall sometimes denote $\mathscr D$ by $S\sqcup C$.



Let $x_{i,j},x_{k,l}$ be a VS pair defined by $e+V$.

\

$(*)$. It can happen that there is either a co-ordinate vector $x_{j,z}\in \supp e$ or a co-ordinate vector $x_{y,k} \in \supp e$, so that the action of $x_{z,l}\in\mathfrak p$ (resp. $x_{i,y}\in \mathfrak p$) gives $x_{j,l}$ (resp. $x_{i,k}$). Consequently either $x_{j,l}$ or $x_{i,k}$ is recovered in a second fashion.

\


[\textbf{N.B.}. In the above we also must check that the given element of $\mathfrak p$ is not used in a second commutator $[x_{z,l},x_{l,u}]$ (resp. $[x_{h,i},x_{i,y}]$).

In the first case if $l \in C$, that is labels an element in the last column, then $x_{l,u}$ cannot lie in $\mathfrak m$, so this holds trivially. In the second case we show that the labels of the labels of a quadruplet $(h,i,y,k)$ lie entirely in $S$ and use the induction hypothesis. Specifically this check is carried out in the last part of \ref {4.4.6} for the Levi factor case and following the claim in \ref {4.4.7} for the general case.]

\

When $(*)$ does \textit{not} happen, we say that $x_{i,j},x_{k,l}$ is a bad VS pair defined by $e+V$.

\

\textbf{Remark}. Checking that a pair is not bad can be relatively easy. Checking it is bad is more difficult and this is why we do not attempt to classify all bad pairs.

\begin {lemma} Let $x_{i,j},x_{k,l}$ be a bad VS pair defined by $e+V$. If $l$ does not belong to $C$, it is a bad VS pair relative to $e^\kappa+V^\kappa$.
\end {lemma}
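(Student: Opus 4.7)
The plan is to transfer the VS pair structure from the big diagram $\mathscr D = S\sqcup C$ to the smaller diagram $S$ using Lemma \ref{3.2.2}, and then to deduce badness by contradiction via the contrapositive: if $(*)$ holds for $e^\kappa + V^\kappa$, then $(*)$ holds for $e+V$.

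First I would verify that $(x_{i,j}, x_{k,l})$ is a VS pair for $e^\kappa + V^\kappa$. Under the labelling of \ref{2.3}, the labels belonging to $C$ are the largest, so the hypothesis $l \notin C$ together with $i<j\leq k<l$ forces all four indices $i,j,k,l$ to be labels of $S$. By Lemma \ref{3.2.2}, the labeled lines of $\ell(S)$ coincide with those lines of $\ell(\mathscr D)$ lying entirely in $S$, so $x_{i,j}, x_{k,l}\in \supp{e^\kappa}$ and $x_{i,k}, x_{j,l}\notin \supp{V^\kappa}$. The connecting element $x_{j,k}$, having both indices in $S$, is a root vector of the parabolic $\mathfrak p^\kappa$ defined in \ref{3.3.1}. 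Together with $j\neq k$, this is exactly the VS-pair data.

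Next I would establish badness by contradiction. Suppose $(*)$ holds for $e^\kappa + V^\kappa$; then there exists $z$ with $x_{j,z}\in \supp{e^\kappa}$ and $x_{z,l}\in \mathfrak p^\kappa$ satisfying $[x_{j,z},x_{z,l}] = x_{j,l}$ (the case of a witness $y$ is symmetric). Since $j<z<l$ and both $j,l$ are labels in $S$, the label $z$ must also lie in $S$. Using the inclusions $\supp{e^\kappa}\subseteq \supp e$ and $\mathfrak p^\kappa\subseteq \mathfrak p$ coming from the block structure of \ref{3.3.1}, the same $z$ is a witness of $(*)$ for $e+V$, contradicting the hypothesis that $(x_{i,j},x_{k,l})$ is bad in the big problem.

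The main technical obstacle lies in the parenthetical N.B. of \ref{4.4.3}, should one insist that $(*)$ include the check that $x_{z,l}$ is not consumed by a second commutator $[x_{z,l}, x_{l,u}]$ with $x_{l,u}\in \supp e$. The only new concern in passing from $\supp{e^\kappa}$ to $\supp e$ is the possible appearance of such an $x_{l,u}$ with $u\in C$. But any such line terminates in the last column block, and the individual recoverability of the resulting $x_{z,u}$ is precisely what is handled by the induction on the number of columns combined with the right-going line analysis following the claim of \ref{4.4.7}. Hence even under the stricter reading of $(*)$, the candidate $z$ produced from the small problem remains valid for $e+V$, completing the contrapositive.
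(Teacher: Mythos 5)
Your proposal is correct and follows the same route as the paper, whose entire proof is the single sentence ``This is immediate from Lemma \ref{3.2.2}''; you have simply unpacked that remark, first transferring the VS-pair data to $S$ via Lemma \ref{3.2.2} and then noting that any witness for $(*)$ in the smaller diagram persists in the larger one because supports and the parabolic only grow. The extra care you take with the second-commutator caveat of the \textbf{N.B.} in \ref{4.4.3} goes beyond what the paper records, but it is consistent with the inductive set-up there.
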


\begin {proof} This is immediate from Lemma \ref {3.2.2}.


\end {proof}

\subsubsection {The Extended Elements} \label {4.4.4}

Following the above lemma we inductively define $e_{VS}$, (resp. $E_{VS}$) by adjoining the co-ordinate vector $x_{j,l}$ (resp. co-ordinate subspace $\mathbb Ce_{j,l}$) to
$e$ (resp. $E$), for each bad VS pair $x_{i,j},x_{k,l}$ as $l$ runs through the columns of $\mathscr D$ going from left to right.

Extending the notation of \ref {3.3.1} we let $e_{VS}^\kappa$ (resp. $E_{VS}^\kappa$) denote the element (resp. subspace $E_{VS}$) obtained as above but in which the contributions coming from the last column $C_k$ are omitted.


\

By definition $E_{VS}$ compensates for the possible strict inclusion in $(4)$, so Proposition \ref {4.2.1}, we obtain

\begin {lemma} $\mathfrak p.e +E_{VS}+V = \mathfrak m$.

\end {lemma}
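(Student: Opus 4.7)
The plan is to deduce the identity from Proposition \ref{4.2.1} together with the definition of $E_{VS}$. By pseudo-regularity one has $\mathfrak{p}.E+V=\mathfrak{m}$, and since $\mathfrak{p}.e \subset \mathfrak{p}.E$ by $(4)$, it suffices to verify the inclusion
\[
\mathfrak{p}.E \subset \mathfrak{p}.e + E_{VS} + V.
\]
So my first step is to analyse precisely when the inclusion in $(4)$ is strict. Since the elements $e_t = x_{p_t,q_t}$ of $\supp e$ have pairwise distinct first and second indices, for a root vector $x_{j,k} \in \mathfrak{p}$ the bracket $[x_{j,k}, x_{p_t,q_t}]$ is non-zero only if $j=q_t$ or $k=p_t$, and at most two such $t$ can occur simultaneously, namely $e_t = x_{i,j}$ and $e_t = x_{k,l}$ for some $i,l$. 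Thus the only obstruction to $x_{j,k}.e_i \in \mathfrak{p}.e$ comes from the VS-pair configurations $(x_{i,j},x_{k,l})$ with connecting element $x_{j,k}$, and the ``missing'' co-ordinate vectors are precisely $x_{i,k}$ and $x_{j,l}$.

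For each such configuration the identity $x_{j,k}.e = \pm x_{i,k} \mp x_{j,l}$ (plus contributions in $\mathfrak{p}.e$ from rows distinct from row~$i$ and column~$l$) allows the following case analysis. If $x_{i,k} \in \supp V$ or $x_{j,l} \in \supp V$, one of the two co-ordinate vectors is absorbed in $V$ and the other lies in $\mathfrak{p}.e + V$. If neither belongs to $\supp V$, the pair is a VS pair associated to $e+V$ in the sense of \ref{4.4.1}. If in addition it is bad, then $x_{j,l}$ has been adjoined to $E_{VS}$ in \ref{4.4.4} by construction, whence $x_{i,k} \in \mathfrak{p}.e + E_{VS}$ follows from the same identity. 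If it is not bad, condition $(*)$ of \ref{4.4.3} produces either $x_{j,z}\in \supp e$ with $x_{z,l}\in \mathfrak{p}$, yielding $\pm x_{j,l} = [x_{z,l},x_{j,z}]$, or the symmetric configuration yielding $\pm x_{i,k}$; the remaining missing vector is then recovered as before.

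The main obstacle is to verify the N.B.\ in \ref{4.4.3}, that is, that the auxiliary element $x_{z,l}$ (or $x_{i,y}$) used to witness ``not bad'' does not itself participate in a second non-trivial bracket $[x_{z,l},x_{l,u}]$ with some $x_{l,u} \in \supp e$ that would spoil the single-commutator recovery. Here the inductive definition of bad pairs plays its role. When the right-most index $l$ lies in the last column block $\textbf{C}_k$, no co-ordinate vector $x_{l,u}$ can belong to $\mathfrak{m}$, so there is nothing to check. When $l$ lies instead in $\textbf{S}$, Lemma \ref{4.4.3} reduces the question to the Weierstrass section $e^\kappa + V^\kappa$ for $\mathfrak{m}^\kappa$, and the inductive hypothesis on the number of columns disposes of it; the base case of a single column is vacuous.

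Summing the contributions from every VS pair yields $\mathfrak{p}.E \subset \mathfrak{p}.e + E_{VS} + V$, and combining this with Proposition \ref{4.2.1} gives $\mathfrak{m} = \mathfrak{p}.E + V \subset \mathfrak{p}.e + E_{VS} + V \subset \mathfrak{m}$, which proves the lemma.
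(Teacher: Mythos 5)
Your argument is correct and follows the same route as the paper: the paper's proof is the single sentence that ``by definition $E_{VS}$ compensates for the possible strict inclusion in $(4)$'', combined with Proposition \ref{4.2.1}, and your proposal simply unpacks what that compensation means (the strictness of $(4)$ arises exactly from connecting elements hitting two components of $e$ at once, and each resulting missing co-ordinate vector is recovered either from $V$, from $E_{VS}$ when the pair is bad, or via the alternative commutator of condition $(*)$ together with the inductive N.B.\ check when it is not). Nothing to correct.
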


\textbf{Remark.} We would really like that $\mathfrak p.e_{VS}+V = \mathfrak m$. As we shall see this fails. The basic difficulty is that the roots occurring in $e_{VS}$, as opposed to those occurring in $e$ may not be linearly independent as elements of $\mathfrak h^*$. Thus as noted in \ref {4.1} the rooks defined by $e_{VS}$ need not cover themselves.

\subsubsection {Towards Bad VS Pairs} \label {4.4.5}

 A key result in excluding a VS pair from being bad is the following.

\begin {thm} Let $x_{i,j},x_{k,l}$ be a bad VS pair. Then the connecting element $x_{j,k}$ comes from a line in $\mathscr T_\mathfrak m$ carrying a $*$, equivalently $x_{j,k} \in \supp V$. In particular $(i,j,k,l)$ is a VS quadruplet.

\end {thm}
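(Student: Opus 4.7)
The plan is to argue by induction on the number of columns of $\mathscr D$. The reduction Lemma \ref{4.4.3} allows one to assume at the outset that $l$ belongs to the last column $C_k$: if not, the bad VS pair pulls back to a bad VS pair relative to $e^\kappa+V^\kappa$ on the truncation $\textbf{S}$, and the conclusion follows from the inductive hypothesis. Under the assumption $l \in C_k$, no co-ordinate vector $x_{l,u}$ lies in $\mathfrak m$, so the N.B. clause of \ref{4.4.3} concerning $x_{z,l}$ is automatic, and to refute badness it suffices to exhibit a single $x_{j,z} \in \supp e$ with $x_{z,l} \in \mathfrak p$ yielding $\pm x_{j,l}$. (The companion alternative $x_{y,k} \in \supp e$ with $x_{i,y} \in \mathfrak p$ is handled by showing that the relevant quadruplet $(h,i,y,k)$ lies entirely in $S$, so that the N.B. follows from the inductive hypothesis.)

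I would then split into three cases according to the nature of the connecting element $x_{j,k}$. (I) If $C_{(j)} = C_{(k)}$, so that $x_{j,k}$ lies in the Levi factor $\mathfrak l$, then $b(j)$ lies strictly above $b(k)$ in the same column of $\mathscr D$; the existence of the $1$-line $\ell_{k,l}$ ending in $C_k$ then forces, via the adjacency and extremal-box rules of \cite[5.4.3, 5.4.7]{FJ2} applied at the row containing $b(j)$, a parallel $1$-line from $b(j)$ into $C_k$, which supplies the required alternative rook, ruling out this case under the badness assumption. (II) If $C_{(j)} < C_{(k)}$ and $\ell_{j,k}$ carries a $1$, then $x_{j,k} \in \supp e$ itself serves as the alternative: taking $z := k$ gives $x_{j,z} = x_{j,k} \in \supp e$ and $x_{z,l} = x_{k,l} \in \supp e \subset \mathfrak m \subset \mathfrak p$, so the commutator $[x_{j,z},x_{z,l}]$ recovers $\pm x_{j,l}$ independently of $x_{j,k}$. (III) If $C_{(j)} < C_{(k)}$ and $\ell_{j,k}$ carries no label, we trace the origin of $\ell_{k,l}$ in the construction of \cite[5.4]{FJ2}: it is produced either by the extremal-box rule \cite[5.4.7(i)]{FJ2} or by the joining of loose ends \cite[5.4.7(ii)]{FJ2}, and in each subcase the absence of a $\ast$ on $\ell_{j,k}$ is shown to imply that an analogous construction also produces a $1$-line either with left endpoint $b(j)$ and right endpoint in $C_k$, or with right endpoint $b(k)$ and left endpoint in a column to the left of $C_{(j)}$, giving the required alternative rook. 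When no such $1$-line can be produced, the rigidity of the construction is forced to place a $\ast$ at $\ell_{j,k}$ — which is precisely the conclusion.

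The main obstacle is Case (III). Its detailed execution requires a further subdivision according to whether a $\ast$-line is incident to $b(j)$ or $b(k)$, whether the pair $C_{(j)}, C_{(k)}$ sits inside a surrounding pair of neighbouring columns in the sense of \ref{3.3.2}, whether $C_{(k)}$ carries a gated line (\cite[5.4.3]{FJ2}), and whether the string of $\ast$'s through the row $\textbf{r}_{C,C'}$ of \ref{3.3.2} reaches $C_k$. The key structural inputs — that a $\ast$ appears only at the bottom of the rightmost column of its column block \cite[6.6.2]{FJ2}, the single-$1$-per-row-and-column rule \cite[5.4.8(vi)]{FJ2}, and the adjacency and extremal-box rules of \cite[5.4]{FJ2} — should force in each subcase either the appearance of the alternative rook, contradicting badness, or the forced $\ast$-labelling of $\ell_{j,k}$ that is the content of the theorem.
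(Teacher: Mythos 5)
Your high-level plan (induction on the number of columns, reduction via Lemma \ref{4.4.3} to the case $l$ in the last column, splitting off the Levi-factor case, and controlling the ``second commutator'' issue through the N.B.\ clause) matches the paper. Your decomposition of the remaining case by the label of $\ell_{j,k}$ rather than by comparing the column heights $c_{(j)},c_{(k)}$ as the paper does in \ref{4.4.8} is a genuinely different organizing choice, and your Case (II) is correctly and cleanly dispatched (it is subsumed implicitly in the paper's Claim with $u=k$). However, there are two concrete gaps.

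In Case (I), you assert that $C_{(j)}=C_{(k)}$ forces $b(j)$ to lie strictly above $b(k)$, i.e.\ $j<k$. This is false: the Levi block contains the strictly lower-triangular entries of that block, so $j>k$ is equally possible, and the paper's Lemma \ref{4.4.6} devotes most of its effort to that subcase, which is handled by a quite different mechanism (a left-going line from $b(k)$ produced via \cite[6.5.4]{FJ2}, then Lemma \ref{3.6.1} and \cite[5.4.8(ix)]{FJ2} to locate its endpoint relative to $C_{(i)}$, and finally the inductive N.B.\ check on the companion quadruplet $(h,i,y,k)$). Even in the $j<k$ subcase, your claim that the rules ``force a parallel $1$-line from $b(j)$ into $C_k$'' is not quite right: what is forced is merely a right-going line $\ell'$ from $b(j)$, which need not end in $C_k$ and may carry a $\ast$ rather than a $1$. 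In the $\ast$ subcase, \cite[6.5.4]{FJ2} forces the right end to be $b(l)$, so $x_{j,l}\in\supp V$ and the pair was never a VS pair by Definition \ref{4.4.1} — a different route to the conclusion, not the one you describe.

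Case (III) is the bulk of the theorem and what you give is a plan, not a proof. The paper's Claim in \ref{4.4.7} is proved in \ref{4.4.8} via a trichotomy on $c_{(j)}$ versus $c_{(k)}$, each branch invoking \cite[5.4.8(v)]{FJ2} to produce a $1$-labelled right-going or left-going line confined to the required range of columns, together with Lemma \ref{3.6.1} when $c_{(j)}<c_{(k)}$ and $b(k)$ sits strictly above $C_{(k)}\cap R_t$; the adjunction of a column on the right (via Lemma \ref{3.2.2}) or on the left (via Lemma \ref{3.2.3}) is used to normalize the picture. Your alternative of tracing the origin of $\ell_{k,l}$ through \cite[5.4.7(i)]{FJ2} versus \cite[5.4.7(ii)]{FJ2} may well be viable, but until the subdivisions you enumerate are actually resolved the main content of the theorem remains unproved.
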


The proof of the theorem is given below. We retain its hypothesis and notation.

\

In view of lemma \ref {4.4.3} we can adopt the notation of \ref {4.4.3} that $\mathscr D=S\sqcup C$ with $C$ the last column of $\mathscr D$ and $l$ an element of $C$. This notation will be retained until the end of Section \ref {4.5}.

%
%
%
%
%
%

\subsubsection {The Levi Factor Case} \label {4.4.6}

Suppose that in the quadruplet $(i,j,k,l)$, the boxes $b(j),b(k)$ lie in the same column, that is $C_{(j)}=C_{(k)}$. (This means that $x_{j,k}$ belongs to the Levi factor of $\mathfrak p$.)


\begin {lemma} If $C_{(j)}=C_{(k)}$, then $x_{i,j},x_{k,l}$ is not a bad VS pair. In other words the assertion of the theorem holds in this case.
\end {lemma}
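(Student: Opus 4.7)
The plan is to verify condition $(*)$ of \ref{4.4.3} directly, by a combination of reduction and case analysis on the construction rules. First I would reduce to the situation where $b(l)$ lies in the last column $C$ of $\mathscr D$: if $l \notin C$, Lemma \ref{4.4.3} says that the pair is bad for $e + V$ exactly when it is bad for $e^\kappa + V^\kappa$, so induction on the number of columns disposes of this situation. So I may assume throughout that $b(l) \in C$ (of height $s$) and that $C_m := C_{(j)} = C_{(k)}$ sits inside $S$ with $b(j)$ strictly above $b(k)$ in $C_m$.

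Since $\ell_{k,l}$ carries a $1$ and ends in the last column, it was produced at the $s$-th stage of the construction by exactly one of the mechanisms (ii), (iii)B, or (iv) of \ref{3.4.2}. In each case the rule imposes rigid constraints on the boxes of $C_m$ lying above $b(k)$ and, via the adjacency and ``loose ends'' prescriptions of \cite[5.4.7, 5.4.8]{FJ2}, on those boxes' own outgoing lines. The central structural claim I would then establish is the following: under the hypothesis $C_{(j)} = C_{(k)}$, either (A) $b(j)$ also carries an outgoing $1$-line $\ell_{j,z}$ whose right endpoint $b(z)$ satisfies $x_{z,l} \in \mathfrak p$, or (B) $b(k)$ carries an incoming $1$-line $\ell_{y,k}$ whose left endpoint $b(y)$ satisfies $x_{i,y} \in \mathfrak p$. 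Either option verifies $(*)$, so the pair cannot be bad.

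To establish the claim I would unpack the three production rules in turn. In case (iv) the Weyl-group equivalence appearing in the definition of $\mathscr R$ naturally pairs up rows corresponding to boxes in $C_m$, and the presence of $b(j)$ above $b(k)$ provides the candidate $z \in C$ giving option (A). In cases (ii) and (iii)B, I would appeal to Lemma \ref{3.6.1} (and, if needed, adjoin an auxiliary column on the right as in the proof of Lemma \ref{3.6.3}(ii)) to locate an intermediate column of height at least that of the row of $b(j)$; then the joining of loose ends \cite[5.4.7(ii)]{FJ2} supplies either the incoming line at $b(k)$ (option (B)) or the outgoing line at $b(j)$ (option (A)).

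Finally I would verify the N.B.\ of \ref{4.4.3} so that the alternative commutator does not itself rely on a re-used element of $\mathfrak p$. In case (A) this is automatic: because $b(l) \in C$, there is no $x_{l,u} \in \mathfrak m$ with $u > l$, so $x_{z,l}$ cannot participate in a second commutator. In case (B) the hypothetical second commutator uses $x_{h,i}$ and $x_{i,y}$ with all four labels $h, i, y, k$ in $S$, and the inductive hypothesis on the number of columns rules this out. The main obstacle is the case analysis in the third step: the three rules (ii), (iii)B and (iv) behave rather differently, and securing options (A) or (B) in each requires close attention to both the vertical ordering within $C_m$ and the placement of intermediate columns, the key technical inputs being Lemma \ref{3.6.1} together with the adjacency restriction \cite[5.4.8(vii)]{FJ2}.
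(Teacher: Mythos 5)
Your overall architecture matches the paper's: reduce to $l \in C$ via Lemma \ref{4.4.3} and induction on the number of columns, then establish the dichotomy that either $b(j)$ has a right-going $1$-line $\ell_{j,z}$ with $x_{z,l}\in\mathfrak p$, or $b(k)$ has a left-going $1$-line $\ell_{y,k}$ with $x_{i,y}\in\mathfrak p$, and finally check that the auxiliary element of $\mathfrak p$ is not re-used (your treatment of that last point — trivial when $l\in C$, inductive when the second quadruplet lies in $S$ — is exactly the paper's). But there is a genuine gap at the start: you assume that $b(j)$ lies strictly above $b(k)$ in $C_m$. This covers only the sub-case $j<k$. Since the connecting element $x_{j,k}$ is here a root vector of the Levi factor, it may be a \emph{negative} root vector, i.e.\ $j>k$, in which case $b(k)$ lies strictly \emph{above} $b(j)$. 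That configuration is not exotic — it is the second, and harder, half of the paper's proof. There one must produce the left-going $1$-line $\ell_{y,k}$ from $b(k)$ (via \cite[6.5.4]{FJ2}) and then show $i<y$ or that $x_{i,y}$ lies in the Levi; this requires applying Lemma \ref{3.6.1} to the line $\ell_{i,j}$ (not to $\ell_{k,l}$) to manufacture an intermediate column $C'$ of height $\geq t-1$, and then invoking \cite[5.4.8(ix)]{FJ2} to force every left-going line from a box of $C_{(j)}$ above $b(j)$ to land to the right of $C_{(i)}$. None of this appears in your sketch, and your case analysis organized around which of the rules (ii), (iii)B, (iv) of \ref{3.4.2} produced $\ell_{k,l}$ does not obviously substitute for it, since those rules constrain the last column block rather than the left-going lines out of $C_m$.

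A secondary point, in the sub-case you do treat: your claim (A) asserts outright that $b(j)$ carries an outgoing $1$-line, but the construction may instead give $b(j)$ an outgoing line carrying a $\ast$. The paper disposes of this by \cite[6.5.4]{FJ2}: such a $\ast$-line must end at $b(l)$ itself, so $x_{j,l}\in\supp V$ and the pair was never a VS pair by Definition \ref{4.4.1}. You need this branch (or an equivalent) to make option (A) available; as written your dichotomy is asserted rather than derived for the boxes of $C_m$.
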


\begin {proof}

Let $t$ be the height of $C_{(j)}=C_{(k)}$.

Suppose first that $j<k$. By hypothesis $b(k)$ has a right going line $\ell$ to $b(l)$ labelled by a $1$. Then by \cite [5.4.8(v)]{FJ2}, the height of $C_{(l)}$ is at most one less than the height of $C_{(k)}$. Yet $b(j)$ lies above $b(k)$ in $C_{(j)}=C_{(k)}$,
so $b(j)$ is not right extremal. Thus there exists
 a right going line $\ell'$ from $b(j)$.

Let $b(z)$ be the right end-point of $\ell'$.
Suppose $\ell'$ is labelled by a $\ast$, then by \cite [6.5.4]{FJ2}, one has $b(z)=b(\ell)$. Thus the line $j,l$ is labelled by a $\ast$ and so the pair $(i,j),(k,l)$ is not a VS pair (Definition \ref {4.4.2}).

 Thus we can suppose that $\ell'$ is labelled by a $1$, in other words $(j,z) \in \supp {e}$. Moreover either $z\in C$ and $x_{z,l}$ belongs to the Levi factor or $z<l$. In both cases $x_{z,l}$ is a root vector of $\mathfrak p$ and we obtain $x_{j,l}$ in a second fashion as $[x_{j,z},x_{z,l}]$. Again $x_{z,l}$ cannot be used in a commutator $[x_{z,l},x_{l,u}]$ because $l$ labels a column in the last column block $\textbf{C}$ and so $x_{l,u}$ cannot lie in $\mathfrak m$.

Suppose now that $j>k$. By hypothesis $b(j)$ has a left going line $\ell$ to $b(i)$, labelled by a $1$ and $b(k)$ lies above $b(j)$ in $C_{(j)}=C_{(k)}$. Then by \cite [6.5.4] {FJ2}, there exists a left going line $\ell'$ from $b(k)$ labelled by a $1$. Let $b(y)$ be the left-hand end-point of $\ell'$. Since $y$ occurs in a column strictly to the left of $C_{(j)}$, necessarily $y<k$.

Let $R_t$ be the row in which $b(j)$ occurs. Take $b_1=b(i),b_2=b(j)$ in Lemma \ref {3.6.1}. By its conclusion there is a column $C'$ of height $\geq t-1$ to the right of $C(i)$ and strictly to the left of $C(j)$. Since the only line carrying a $\ast$ having a box in $C_{(j)}$ as its right hand end point must lie strictly below $b(j)$, every left-going line from a box in $C_{(j)}$ above $b(j)$ must be downward going by \cite [5.4.8(ix)]{FJ2} and hence meet $C'$ or a column between $C',C_{(j)}$. In both cases this column lies to the right of $C_{(i)}$, though not necessarily strictly.

Thus either $i<y$ or both $b(y),b(i)$ lie in $C_{(i)}$ and in this case $x_{i,y}$ lies in the Levi factor. In both cases we recover $x_{i,k}$ as the commutator $[x_{i,y},x_{y,k}]$.

Finally suppose $x_{i,y}$ is used in a second commutator $[x_{h,i},x_{i,y}]=x_{h,y}$, coming from a second VS pair $x_{h,i},x_{y,u}$. Then the line $\ell_{y,u}$ carries a $1$. However the line $\ell_{y,k}$ is also labelled by a $1$ and by \cite [5.4.8(vi)]{FJ2} this forces $u=k$. Then the resulting quadruplet $(h,i,y,k)$ lies entirely in $S$.

Thus if this second VS pair is bad we can assume by the inductive construction \ref {4.4.4} that the subspace $\mathbb Cx_{i,k}$ has already been adjoined to $E^\kappa_{VS}$ and thus we recover $x_{h,y}$, as required.

 \end{proof}

\subsubsection {The General Case.}\label {4.4.7}

The proof of general case is similar, but one has to be more careful in view of the possible appearance of $\ast$ which allows the VS pair to be a VS quadruplet.

Let $(i,j),(k,l)$ be a VS pair, with $l \in C$, which does \textbf{not} form a VS quadruplet and that $b(j),b(k)$ are in distinct columns, so then $i<j<k<l$. By definition of a VS pair, $b(j)$ (resp. $b(k)$) admits a left (resp. right) going line to $b(i)$ (resp. $b(l)$). Under these conditions one has the

\

\textbf{Claim}. Either $b(j)$ admits a right going line labelled by a $1$, to $b(u):j<u<l$, or $b(k)$ admits a left going line labelled by a $1$, to $b(v):i<v<k$.

\

In the above, the inequality $j<u$ (resp $v<k$) follows because the box joined to $b(j)$ (resp. $b(k)$) lies strictly to the right (resp. left) of $C_{(j)}$ (resp. $C_{(k)}$). The second pair of inequalities is more delicate.

\

Assume the claim holds.

\

In the first case we obtain $x_{j,l}$ as $[x_{j,u},x_{u,l}]$ in the second case we obtain $x_{i,k}$ as $[x_{i,v},x_{v,k}]$.

As in the case when $b(j),b(k)$ are in the same column, we cannot use $x_{u,l}$ again with respect to a further VS pair $x_{j,u},x_{l, t}$ because $l\in C$ and so $x_{l,t} \notin \mathfrak m$.

As in the case when $b(j),b(k)$ are in the same column, one may use $x_{i,v}$ for a second VS pair $x_{f,i},x_{v,k}$; but this will lie entirely in $S$. As in the proof of Lemma \ref {4.4.6}, if this pair is bad, we can assume that the subspace $\mathbb Cx_{i,k}$ has already been adjoined to $E^\kappa_{VS}$.
.

Thus the theorem results from the claim, which we now proceed to prove.

\subsubsection {Proof of claim.}\label {4.4.8}

 Suppose first that $t:=c_{(j)}=c_{(k)}$. Then by \cite [5.4.8(v)]{FJ2} every box in $R^t$ lying in $C_{(j)}$ (resp. $C_{(k)}$) has a right (resp. left) going line to a box between these two columns. In particular this box may be $b(j)$ (resp. $b(k)$) and writing the other end point of this line as $b(u)$ (resp. $b(v)$) we obtain the remaining inequality $u<l$ (resp. $i<v$). One of these two lines carry a $1$ with the following possible exception. Namely, the right going line from $b(j)$ is the same as the left going line from $b(k)$ and this line carries a $\ast$. However this means that $(i,j,k,l)$ is a VS quadruplet.

 Thus we can assume $c_{(j)}\neq c_{(k)}$.

 Suppose that $c_{(j)}> c_{(k)}$ and Set $t =c_{(j)}$. Then $C_{(j)}$ admits a nearest column $C'$ of height $t$ strictly to the right of $C_{(k)}$ or we can adjoin one to the extreme right of $\mathscr D$. By Lemma \ref {3.2.2}, this adjunction will not change the left going lines from boxes in $C_{(k)}$.

 Let $C''$ be a left neighbour to $C'$. It lies to the right of $C_{(j)}$ and strictly to the left of $C_{(k)}$.

 By \cite [5.4.8(v)]{FJ2} every box in $C_{(k)}$ (so particularly $b(k)$) has a left going line $\ell$ to a box lying to the right of $C''$ labelled by a $1$, unless $\ell$ goes leftwards from the box $C'\cap R_t$. The latter is impossible because $C'$ lies strictly to the right of $C_{(k)}$. Finally the box $b(v)$ joined to $b(k)$ by $\ell$ lies to the right of $C''$, so to the right of $C_{(j)}$, so strictly to the right of $C_{(i)}$. This gives the required inequality $i<v$.

 It remains to consider the case $c_{(j)}<c_{(k)}$, which is similar but a little more complicated.

 Set $t=c_{(k)}$. Then $C_{(k)}$ admits a column $C''$ of height $t$ strictly to the left but nearest to $C_{(j)}$ or we can adjoin it as a left neighbour to $C_{(k)}$ to the extreme left of $\mathscr D$. By Lemma \ref {3.2.3}, and recalling that $c_{(j)}< c_{(k)}$, this adjunction will not change the right going lines from boxes in $C_{(j)}$, though it may change their labels.


 By \cite [5.4.8(v)]{FJ2} the box $b(j)$ in $C_{(j)}$ has a right going line $\ell$ to a box at the left of $C_{(k)}$, hence strictly to the left of $C_{(l)}$. If it is labelled by a $1$, the claim holds in this case as in the previous case.

 Otherwise by \cite [5.4.8(v)]{FJ2}, there is exactly a right going line $\ell'$ from $b(j)$ labelled by a $\ast$ whose right end point is the box $b'$ in $C_{(k)}\cap R_t$. Notice that this also means that $C_{(k)}$ must already have a left neighbour $C''$ in $\mathscr D$, so adjoining a left neighbour is unnecessary. Moreover by adjacency \cite [5.4.3,5.4,11]{FJ2}, $\ell'$ cannot cross a column of height $\geq t$, so there is no column of height $t$ strictly between $C_{(j)},C_{(k)}$. Thus $C''$ is a left neighbour to $C_{(k)}$ in $\mathscr D$.

 If $b'=b(k)$, then $(i,j,k,l)$ is a VS quadruplet and we are done.

 However by definition $b(k) \in C_{(k)}$ and so if $b(k)\neq b'$, then $b(k)$ must appear strictly above $b'$, that is as $C_{(k)}\cap R_{t'}$ for some $t'<t$.

Yet by Lemma \ref {3.6.1} there is a column $C'$ of height $\geq t-1$ with $C_{(j)} \leq C' <C_{(k)}$. Then exactly as in the proof of the last part of Lemma \ref {4.4.7} we conclude by \cite [5.4.8(v)]{FJ2} that $b(k)$ admits a left going line $\ell''$ labelled with a $1$ to some box $b(v)$ to the right of $C'$. Yet $C'$ lies to the right of $C_{(j)}$ and thus strictly to the right of $C_{(i)}$. Consequently $i<v$, as required.

%


 This completes the proof of the claim and taking account of Lemma \ref {4.4.7} the theorem.

 \

 \textbf{Remark.} This result cuts down drastically the number of bad VS pairs. Indeed by \cite [5.4.8(vi)]{FJ2} there is at most left (resp) right going line carrying a $1$ from a box in $\mathscr T_\mathfrak m$. We conclude that a line, say $\ell_{j,k}$, carrying a $\ast$ uniquely determines a VS quadruplet $(i,j,k,l)$ (if one exists). Thus the number of bad VS pairs is at most $g$.

 \

 \textbf{Examples.}

 \

 $(i)$. Consider the array $(1,2,1,1,2,2,3)$. Take $i=3,j=4,k=9,l =11$. Then $(3,4),(9,11)$ is a VS pair. Now the lines $\ell_{4,7}$ and $\ell_{7,9}$ are the obvious choices for the lines of the claim but both carry a $\ast$, so at first sight one seems to get a contradiction. However there is also a line $\ell_{4,9}$ carrying a $1$ obtained by the joining of loose ends \cite [5.4.7(ii)]{FJ2}. Such joinings are essential to validate \cite [5.4.8(v)]{FJ2} used in the above proof.

 \

 $(ii)$. Consider the array $(1,2,1,1,2,3)$. Then the VS pair $(3,4),(7,9)$ forms a VS quadruplet. Actually since the line $(4,10)$ carries a $1$ and $10$ lies in the same column as $9$, $x_{4,9}$ can be recovered in a second fashion as $[x_{4,10},x_{10,9}]$. Thus this VS pair is not bad.

 \

 $(iii)$. Consider the array $(3,2,1,1,2,3)$. There are just two VS quadruplets $(4,6,7.8)$ and $(4,6,9,11)$. One may easily check that the first is not bad. Thus the second being not bad would imply that $e$ is regular. One can check that the second is indeed bad, but a better method is to show that $e$ cannot be regular by computing its nilpotency class as in \cite [6.10.9, Example 1]{FJ2}.

 Again as in loc. cit. one may adjoin the co-ordinate vector $(6,11)$ to $e$. The resulting element $e_{VS}$ \textit{is} regular because the graph defined by the roots of the root vectors in $e_{VS}$ has no cycles and so these roots are linearly independent. More details of this example are in section \ref{6} figure \ref{fig3}.

\subsection {Weierstrass Sections} \label {4.5}

\subsubsection {A Preliminary lemma} \label {4.5.1}

Recall the subalgebra $\mathfrak u_{\pi'}$ of $\mathfrak m$ defined in \cite [2.5]{FJ2}. It is spanned by root subspaces and those which do not appear in $\mathfrak m$ are called the excluded root vectors \cite [2.7]{FJ2}. The set of all such excluded root vectors in $\mathfrak m$is denoted by $X$ in \cite [6.9.5]{FJ2}. In the matrix presentation, that is in \textbf{M}, they are encircled by $O$ \cite [6.9.5]{FJ2}. (Here we shall not use this diagrammatic presentation but a reader working through examples is urged to do so.)

We construct $E_{VS}$ by adjoining $\mathbb Cx_{j,l}$ for each bad VS pair $x_{i,j},x_{k,l}$. In this Theorem \ref {4.4.5} allows us to assume that $(i,j,k,l)$ is a VS quadruplet.

We show that $E_{VS} \subset \mathfrak u_{\pi'}$. To do this we shall \textit{not} need to require that $(i,j,k,l)$ is bad, as a VS quadruplet.

 Further to the definition of $X$ above, we recall \cite [6.9.3]{FJ2} that every $\ast$ is encircled. Following \cite [6.9.5]{FJ2} we let that $Y$ be the set of co-ordinate vectors with a $O$ encircling a $\ast$ and $Z$ the set of co-ordinate vectors which are just encircled by an $O$.

 Finally set $Z=X\setminus Y$.


 The hypothesis of the following lemma holds if for example $(i,j,k,l)$ is a VS quadruplet.

\begin {lemma} Suppose $x_{j,k}\in Y$. If $x_{k,l}$ is a co-ordinate vector of $e$, for some $l>k$, then $x_{j,l} \notin X$. In other words $x_{j,k}$ is encircled but its ``neighbour'' $x_{j,l}$ is not.
\end {lemma}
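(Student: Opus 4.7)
The strategy is direct verification using the matrix-level description of $X$ (the encircled positions) from \cite[2.7, 6.9.5]{FJ2}, which realizes $X$ as a union of excluded root-vector regions indexed by pairs of neighboring columns. The plan is to locate the pair responsible for the $\ast$ at $(j,k)$, then use the $1$ at $(k,l)$ to rule out $(j,l)$ being in any encircled region.

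First, identify the pair of neighboring columns $(C_v,C_{v'})$ of height $s$ responsible for placing the $\ast$ at $(j,k)$. By \ref{3.3.2} together with \cite[6.6.2]{FJ2}, the $\ast$ sits in the rightmost column of the column block $\mathbf{C}_{v'}$ along the canonical row $\mathbf{r}_{C_v,C_{v'}}$, so that $b(k) \in R_s\cap C_{v'}$. Next, exploit the hypothesis $x_{k,l}\in \supp e$: the line $\ell_{k,l}$ carries a $1$, and by \cite[5.4.8(v),(vi)]{FJ2} this forces $C_{(l)}$ to lie immediately to the right of $C_{v'}$ without an intervening column of height $\geq s$, and $b(l)$ to lie in a box of $R^{s-1}$ (or equivalently in a column of height strictly less than $s$).

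Now suppose for contradiction that $x_{j,l} \in X$, so $x_{j,l}$ is excluded by some neighboring pair $(C_{v_1},C_{v_2})$ of height $t$. Analyze the shape of that exclusion region via \cite[2.7]{FJ2} in the matrix presentation: the row of $j$ and column of $l$ pin down $(C_{v_1},C_{v_2})$ to one of a small number of configurations relative to $(C_v,C_{v'})$. Either $(C_{v_1},C_{v_2}) = (C_v,C_{v'})$, or $(C_{v_1},C_{v_2})$ surrounds $(C_v,C_{v'})$ in the sense of \ref{3.3.2}, or there is an ``inverted'' surrounding configuration. In the first case the exclusion region would already force $(k,l)$ to be encircled, contradicting $x_{k,l}\in \supp e$ via \cite[6.9.5]{FJ2}. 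In the surrounding case one has $t\geq s$ and $C_{v_2}$ lies strictly to the right of $C_{v'}$, so that $C_{(l)}$ must sit between $C_{v'}$ and $C_{v_2}$; this configuration combined with the adjacency constraint from step~2 forces a column of height $\geq s$ to separate $C_{(k)}$ from $C_{(l)}$, again contradicting Step~2.

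The main obstacle is the case analysis in the last step: tracking the precise shape of the excluded region in each relative configuration of the two pairs and verifying that no coherent choice of $(C_{v_1},C_{v_2})$ can simultaneously encircle $(j,l)$ while leaving $(k,l)$ available for a $1$-label. This parallels the adjacency and height-comparison arguments used in \ref{4.4.8}, and the surrounding-pair subcase is the most delicate since one must handle both the left and right surrounding configurations described in \ref{3.3.2}.
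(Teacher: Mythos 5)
Your high-level plan (use the matrix description of $X$ from [2.7]{FJ2} plus the fact that the $1$ at $(k,l)$ cannot be encircled) points in the right direction, but the proposal has a genuine gap: it never engages with the actual shape of the exclusion sets, and that shape is the entire content of the paper's proof. In the paper, each $X_{C',C}$ (for a neighbouring pair of height $s$) is a union of rectangles, one for each column of height $>s$ lying between the pair, each rectangle occupying a prescribed band of rows and, crucially, having a \emph{gap} between its $(s+1)^{th}$ and $c_{j'-1}^{th}$ rows where no excluded roots lie. The argument then splits according to the height of the column containing $k$: if that height exceeds $s$, the position $(j,l)$ sits strictly above every relevant rectangle and is trivially not excluded; if it equals $s$ there is no $(j,l)$ to the right at all; and in the delicate case (height $<s$) one shows $(\alpha)$ that row $j$ meets the relevant rectangle strictly \emph{below} its gap, while the assumption $x_{j,l}\in X$ together with $x_{k,l}\notin X$ would force $x_{k,l}$ into the gap of some rectangle and hence $x_{j,l}$ strictly \emph{above} that same gap, which is contradictory. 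Nothing in your proposal plays the role of this gap argument; your assertion that the row of $j$ and column of $l$ ``pin down $(C_{v_1},C_{v_2})$ to one of a small number of configurations'' is exactly the unproved step, and the three configurations you list (equal, surrounding, inverted surrounding) are not justified --- the surrounding relation of \ref{3.3.2} governs where $\ast$'s appear in $\textbf{M}$, not which positions a given pair excludes.

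In addition, your Step 2 makes unsupported claims: that the $1$ at $(k,l)$ forces $C_{(l)}$ to lie immediately to the right of $C_{v'}$ with no intervening column of height $\geq s$, and that $b(l)$ lies in $R^{s-1}$. Lines carrying a $1$ out of $b(k)$ can land far to the right (for instance via the joining of loose ends in [5.4.7(ii)]{FJ2}), so neither claim follows from [5.4.8(v),(vi)]{FJ2} as stated, and the contradiction you extract in the ``surrounding'' subcase rests on them. To repair the argument you would need to import the rectangle-plus-gap description of [2.7]{FJ2} and redo the case analysis along the lines above.
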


\begin {proof} The set $X$ is the union (not necessarily disjoint) of the sets $X_{C',C}$ of elements excluded by a pair of neighbouring columns $C',C$ of height, say $s$. The elements of a given $X_{C',C}$ have co-ordinates from the entries of the boxes in the columns between $C',C$ and moreover are determined by the distribution of these entries through a simple combinatorial procedure \cite [2.7]{FJ2}.

 Again by Lemma \ref {3.2.2}, the hypothesis of the lemma does not depend on the columns strictly to the right of the column containing $l$. Thus we may assume without loss of generality that $C$ is the rightmost column of $\mathscr D$.

By \cite [2.7]{FJ2}, the subset $X_{C',C}$ of elements of $X$ obtained from the neighbouring pair $C',C$ is described in the following fashion.

Let $C_1,C_2,\ldots,C_u$ be the columns of height strictly greater than $s$ between $C',C$. For all $i=1,2,\ldots,u$, let $c_i$ denote the height of $C_i$.

Recall the notation of \ref {2.2}. After \cite [2.7]{FJ2} we have the following result.

 The excluded root vectors in \textbf{M} from set $X_{C',C}$ lie in the rectangles $R_{j'}:j' \in [1,u+1]$ formed from the last $c_{j'}-s:j'\leq u$ columns (or last column if $j'=u+1$) of $\textbf{C}_{j'}$ starting from the highest row of the block $B_{{j'}-1}$ to the row just above the highest row of $B_{j'}$.

 Moreover each such rectangle $R_{j'}:j' \in [1,u+1]$ has a gap between the $(s+1)^{th}$ and $c_{j'-1}^{th}$ row relative to $B_{j'-1}$, where exactly no excluded roots lie.

 For $(3)$ below, one should keep in mind that the origin of this gap is in the columns of height $<s$ lying strictly between $C_{j'-1}$ and $C_{j'}$.

Furthermore

\

\textbf{Remark 1.} Recall that by \cite [6.6.2]{FJ2} an element of $Y$ in a column block always lies in its last column.

\

\textbf{Remark 2.} No co-ordinate of $e$ is excluded, equivalently no $1$ in $\textbf{M}$ is encircled \cite [Cor. 6.9.3]{FJ2}.

\

There are three cases to consider.

In the first and second of these, the gap noted above will cause no problems, in the last case it might.

\

$Case (1)$. The element $x_{j,k} \in Y$, lies in a column block of a column of height $>s$, that is some $\textbf{C}_{j'}:j' \in [1,u]$ in the above notation.

\

By Remark $1$, $x_{j,k}$ lies in the last column of $\textbf{C}_{j'}$. Thus an entry $x_{j,l}:l>k$ must lie in one of the column blocks $\textbf{C}_{j''}:j''>j'$ and in particular strictly above the rows of the $B_{j'}$. Thus it cannot lie in any of the rectangles $R_{j''}:j'' \in [1,u]$, for $j''>j'$ and nor of course for $j''\leq j'$. Thus $x_{j,l}$ is not an excluded co-ordinate and so not encircled.

\

$Case (2)$. The element of $x_{j,k}\in Y$ lies in a column block of a column of height $s$, that is to say from $\textbf{C}_k$.

\

In this case the co-ordinate vector $x_{j,l}$ strictly to right of $x_{j,k}$ does not even exist.

\

$Case (3)$. The element of $x_{j,k} \in Y$ lies in a column block of a column of height $s'<s$.

\

This means that $x_{j,k}$ belongs to a column $C''$ of height $<s$, so lying strictly between $C_{j'-1}$ and $C_{j'}$, for some $j' \in [2,u]$.

Recall that the element $x_{j,k} \in Y$ given by \cite [Sect. 5.4.8]{FJ2} obtains by a two-fold process. First in step $2$ of \cite [5.3]{FJ2}, one starts from a horizontal line carrying a $\ast$ going rightwards to the box in $R_{s'}\cap C''$. Then at Step $3$, this line may have its left hand end point moved down to the top of a column $C'''$ of even lower height than $C''$ \cite [5.4.7,5.4.8]{FJ2}, but without crossing a column of height by $\geq s$ via adjacency \cite [Lemma 5.4.8 (vii)]{FJ2}.

We conclude that $C'''$ also lies strictly between $C_{j'-1}$ and $C_{j'}$.

 As an immediate consequence

\

$(\alpha)$. The row containing $x_{j,k}$ meets $R_{j'}$ strictly below its gap.

\

Yet by the hypothesis the line $\ell_{k,l}$ carries a $1$ and so by Remark 2, the root vector $x_{k,l}$ is not excluded, equivalently $x_{k,l}$ is not in $X$. Then

\

 either
no co-ordinate vector $x_{k',l}:k'\leq k$ is excluded and in particular $x_{j,l}\notin X$,

\

or $x_{j,l}\in X$, \textit{and}
$x_{k,l}$ lies in a gap of some $R_{j''}:j''\in [1,u]$.

\

$(\beta)$. In the second case, $x_{j,l}$ lies in that part of $R_{j''}$ strictly above its gap.

\

Since the rows of distinct rectangles are disjoint, $(
\alpha),(\beta)$ force $j'=j''$.

Then $(\alpha),(\beta)$ become contradictory excluding the second option.

Hence the conclusion of the lemma.


\end {proof}

\subsubsection {VS Quadruplets} \label {4.5.2}

Let $(i,j,k,l)$ be a VS quadruplet. Recall the definition of $E_{VS}$ given in \ref {4.4.4}.

\begin {cor} There is no element of $Z$ at the co-ordinate $j,l$. In particular $E_{VS} \subset \mathfrak u_{\pi'}$.
\end {cor}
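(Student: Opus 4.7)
The plan is to derive the corollary as a direct application of Lemma \ref{4.5.1} together with Theorem \ref{4.4.5}, so the task is really one of unpacking definitions and verifying that the hypotheses of the lemma are satisfied for the $(j,l)$ entries that span $E_{VS}$.

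For the first assertion, I would proceed as follows. By the very definition of a VS quadruplet (\ref{4.4.2}), one has $x_{j,k} \in \supp V$. Since every $\ast$ is encircled by \cite[Cor.~6.9.3]{FJ2}, this means $x_{j,k}$ is a co-ordinate vector with an $O$ encircling a $\ast$, i.e. $x_{j,k} \in Y$. Again by definition of a VS quadruplet, $x_{k,l}$ is a co-ordinate vector of $e$ with $l > k$. Thus the two hypotheses of Lemma \ref{4.5.1} are met, so its conclusion gives $x_{j,l} \notin X$. Since $Z \subset X$, this in particular yields that no element of $Z$ sits at the co-ordinate $(j,l)$, as required.

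For the second assertion, I would combine this with the construction of $E_{VS}$ in \ref{4.4.4}. By that construction, $E_{VS}$ is obtained from $E$ by successively adjoining the one-dimensional subspaces $\mathbb C x_{j,l}$ corresponding to bad VS pairs $x_{i,j},x_{k,l}$. By Theorem \ref{4.4.5} every such bad VS pair produces a VS quadruplet $(i,j,k,l)$, so by the first assertion $x_{j,l} \notin X$ for each of the adjoined generators. On the other hand, $E$ is spanned by the co-ordinate vectors appearing in $\supp e$, and Remark 2 of the proof of Lemma \ref{4.5.1} (that is, \cite[Cor.~6.9.3]{FJ2}) says that no co-ordinate vector of $e$ is encircled, hence $E \subset \mathfrak u_{\pi'}$. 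Combining these two facts, every basis vector of $E_{VS}$ lies outside $X$, and since $\mathfrak u_{\pi'}$ is precisely the span of the non-excluded root vectors of $\mathfrak m$, we conclude $E_{VS} \subset \mathfrak u_{\pi'}$.

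The argument has no real obstacle; the only point that needs a moment's care is the bookkeeping between the two different labels: the VS quadruplet condition $x_{j,k} \in \supp V$ must be translated into membership in $Y$ rather than just $X$, because Lemma \ref{4.5.1} requires the stronger hypothesis ``$x_{j,k}$ encircles a $\ast$.'' Once this translation is made and Theorem \ref{4.4.5} is invoked to supply the VS quadruplet structure for every generator of $E_{VS}$, the corollary follows immediately.
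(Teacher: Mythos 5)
Your proposal is correct and follows essentially the same route as the paper: both deduce the first assertion by verifying the hypotheses of Lemma \ref{4.5.1} (translating $x_{j,k}\in\supp V$ into $x_{j,k}\in Y$ via the fact that every $\ast$ is encircled) and then obtain the second assertion from the construction of $E_{VS}$ together with Theorem \ref{4.4.5}. The paper merely declares the second part ``immediate,'' whereas you spell it out; nothing in your argument diverges from the intended one.
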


\begin {proof}

Since by definition of a VS quadruplet, an element of $Y$ occurs at the co-ordinate $(j,k)$, whilst $1$ occurs at $(k,l)$. Then the hypothesis of Lemma \ref {4.5.1} and by its conclusion there is no element of $Z$ at the co-ordinate $(j,l)$, as required.

The second part is immediate.

\end {proof}

\textbf{Remark.} Return to the parabolic defined by the composition $(3,2,1,1,2,3)$, that is Example (iii) of \ref {4.4.8}. In this the VS quadruplet $(4,6,9,11)$ is bad. According to the corollary, $x_{6,11} \notin X$, as may be checked. Thus this root subspace may be included in $E_{VS}$ to retain the property that $E_{VS} \subset \mathfrak u_{\pi'}$. Yet $x_{4,9} \in X$, so this property would be destroyed had we instead required this second root subspace to lie in $E_{VS}$.

This is why we chose to adjoin the ``right-hand'' element $x_{j,l}$ to $e_{VS}$ and not the ``left-hand'' element $x_{i,k}$. It is remarkable that this choice always works.

\subsubsection {VS Quadruplets} \label {4.5.3}

Recall the definition of $\mathscr N$ given in \ref {1.3}. The following key result was announced in \cite [Prop. 6.10.]{FJ2}. Here it is proved/

\begin {prop}

\

(i). $P.E_{VS} \subset \overline{B.\mathfrak u_{\pi'}}:=\mathscr N^e$.

\

(ii). $\dim P.E_{VS} =\dim \mathfrak m - g$.

\

(iii). $\overline{P.E_{VS}} =\overline {B.\mathfrak u_{\pi'}}$.
\end {prop}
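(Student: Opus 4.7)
The plan is to establish (i), (ii), (iii) in order, combining the inclusion $E_{VS}\subset \mathfrak u_{\pi'}$ from Corollary \ref{4.5.2} with the identity $\mathfrak p.e+E_{VS}+V=\mathfrak m$ from Lemma \ref{4.4.4}, and finishing by an irreducibility argument.

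For (i), I would first invoke Corollary \ref{4.5.2} to get $E_{VS}\subset \mathfrak u_{\pi'}\subset \overline{B.\mathfrak u_{\pi'}}=\mathscr N^e$. The nilfibre $\mathscr N$ is the common zero locus of the generators of $\mathbb C[\mathfrak m]^{P'}_+$, which are $P$ semi-invariants, so $\mathscr N$ is $P$-stable. Since $P$ is connected it preserves each irreducible component set-wise, and in particular the canonical component $\mathscr N^e$ containing $e$. Therefore $P.E_{VS}\subset \mathscr N^e$, which is (i).

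For (ii), I would analyse the action map
\[
\mu\colon P\times E_{VS}\to \mathfrak m,\qquad (p,x)\mapsto p.x,
\]
whose image is $P.E_{VS}$. A brief computation shows that $d\mu$ at $(p,x)$ sends $(\xi,v)\in \mathfrak p\oplus E_{VS}$ to $p.([\xi,x]+v)$, so the rank of $d\mu_{(p,x)}$ equals $\dim(\mathfrak p.x+E_{VS})$, a function of $x$ alone. By lower-semicontinuity of rank in an algebraic family of linear maps, this dimension attains its generic (maximal) value on a Zariski dense open subset of $E_{VS}$. Now $e\in E\subset E_{VS}$, and specializing to $x=e$, Lemma \ref{4.4.4} forces $\dim(\mathfrak p.e+E_{VS})\ge \dim \mathfrak m-\dim V=\dim \mathfrak m-g$. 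Hence $\dim P.E_{VS}\ge \dim \mathfrak m-g$, and the reverse inequality follows from (i) together with $\dim \mathscr N^e=\dim \mathfrak m-g$ from \cite[6.9.7]{FJ2}, yielding (ii).

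Assertion (iii) is then formal: $P.E_{VS}$ is the image of the irreducible variety $P\times E_{VS}$, hence irreducible, and so is its closure; being contained in the irreducible variety $\mathscr N^e$ and of the same dimension by (ii), it must equal $\mathscr N^e$. None of these steps is hard in itself; the genuine technical content has been pushed back into Corollary \ref{4.5.2} (resting on Theorem \ref{4.4.5} and the column-rectangle bookkeeping of Lemma \ref{4.5.1}), which guarantees $E_{VS}\subset \mathfrak u_{\pi'}$, and into Lemma \ref{4.4.4}, whose proof uses the pseudo-regularity of $e$ (Proposition \ref{4.2.1}) and the careful choice in \ref{4.4.4} of the ``right-hand'' extension $x_{j,l}$ over $x_{i,k}$. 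With those ingredients in place, Proposition \ref{4.5.3} reduces to a rank-specialization argument and a dimension count.
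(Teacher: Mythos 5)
Your proposal is correct and follows essentially the same route as the paper: (i) from Corollary \ref{4.5.2} plus $P$-stability of the component, (ii) by a tangent-map/dimension bound at $e$ using Lemma \ref{4.4.4} together with $\dim \overline{B.\mathfrak u_{\pi'}}=\dim\mathfrak m-\dim V$ from [FJ2, Thm.\ 6.9.7], and (iii) by irreducibility. The only cosmetic difference is that the paper computes the tangent map of $\psi\colon P\times F\to\mathscr C$ for a complement $F$ of $\mathfrak p.e$ in $\mathfrak p.e+E_{VS}$ (following Dixmier), whereas you work with the full action map $P\times E_{VS}\to\mathfrak m$ and semicontinuity of rank; these amount to the same estimate.
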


\begin {proof} By \cite [Cor. 6.9.8]{FJ2}, $\mathscr C$ is a component of $\mathscr N$, hence $P$ stable. Then (i) follows from Corollary \ref {4.5.2}.

Let $F \subset E_{VS}$ be a vector space complement to $\mathfrak p.e$ in $\mathfrak p.e+E_{VS}:=U$. So then
$$U=\mathfrak p.e \oplus F. \eqno{(6)}$$

Now $e,F$ both belong to the vector space $E_{VS}$, so $P.(e+F) \subset P.E_{VS}\subset \mathscr C$, by (i).


Following the (standard) argument given on p. 262 of \cite [Lemma 8.1.1]{D}, let $(y_1,\ldots,y_r)$ be a basis for $F$. Then we have a morphism $\psi:P\times F \mapsto \mathscr C$ defined by $\psi(p,\xi_1,\ldots,\xi_r)=p.(e+\sum_{i=1}^r\xi_iy_i)$.

Through $(6)$ it follows as in \cite [Lemma 8.1.1]{D} that image of the tangent map at $0 \in \mathbb C^r$ is $U$. Thus $\dim P.E_{VS} \geq \dim P.(e+F) =\dim U$.

On the other hand $U+V= \mathfrak m$, by Lemma \ref {4.4.4}, and so $\dim U \geq \dim \mathfrak m - \dim V$. Thus
$$\dim P.E_{VS}\geq \mathfrak m - \dim V. \eqno {(7)}$$

Yet $\dim \overline{B.\mathfrak u_{\pi'}}= \dim \mathfrak m-\dim V$, by \cite [Thm. 6.9.7]{FJ2}.

 Combined with (i), this gives the opposite inequality to $(7)$. Hence (ii).
%
%
%
%

 Finally (iii) follows from (i),(ii) and the irreducibility of the right hand side.

 \end {proof}

 \textbf{Remarks.} This proves \cite [Prop. 6.10.4]{FJ2}.
 On the other hand by \cite [6.10.7]{FJ2} it cannot be that $P.e_{VS}$ is always dense in $\mathscr C$. Again as we saw in \ref {4.1}, even the dimension of $\mathfrak p.E$ can strictly exceed $\dim \mathfrak m - g$.

\subsubsection {Weierstrass Sections} \label {4.5.4}

Fix a standard parabolic subalgebra $\mathfrak p$ of $\mathfrak g$, with $\mathfrak m$ its nilradical and $\mathscr T_\mathfrak m$ the tableau it defines.

Let $C,C'$ be a pair of neighbouring columns of $\mathscr T_\mathfrak m$ having height $s$ and $f_{C,C'}$ the corresponding Benlolo-Sanderson invariant and let $\varphi:(\mathbb C [\mathfrak m])^{P'} \rightarrow \mathbb C[e+V]$ (resp. $\varphi_{VS}:(\mathbb C [\mathfrak m])^{P'} \rightarrow \mathbb C[e_{VS}+V]$) be defined by restriction of functions.

\begin {prop}

\

$(i)$. For every pair of neighbouring columns $C,C'$ of $\mathscr T_\mathfrak m$, one has $\varphi(f_{C,C'})=\varphi_{VS}(f_{C,C'})$.

\

$(ii)$. $e_{VS} + V$ is a Weierstrass section for the action of $P'$ on $\mathfrak m$.
\end {prop}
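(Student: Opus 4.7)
The plan is to treat $(i)$ and $(ii)$ in sequence, with the bulk of the work in $(i)$.

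For $(ii)$, the deduction from $(i)$ is straightforward. Both $\varphi$ and $\varphi_{VS}$ are algebra homomorphisms from $\mathbb C[\mathfrak m]^{P'}$ into coordinate rings of affine subvarieties of $\mathfrak m$ modelled on the same vector space $V$. The canonical translation identifications $\mathbb C[e+V]\cong \mathbb C[V]\cong \mathbb C[e_{VS}+V]$ allow one to regard both maps as taking values in $\mathbb C[V]$. Since by \ref{2.7} the Benlolo--Sanderson invariants $\{f_{C,C'}\}$ generate $\mathbb C[\mathfrak m]^{P'}$ as an algebra, assertion $(i)$ forces $\varphi=\varphi_{VS}$ as algebra homomorphisms. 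As $e+V$ is the canonical Weierstrass section, $\varphi$ is an isomorphism, hence so is $\varphi_{VS}$, establishing that $e_{VS}+V$ is a Weierstrass section.

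For $(i)$ I would set $w:=e_{VS}-e=\sum_\alpha x_{j_\alpha,l_\alpha}$, the sum ranging over bad VS pairs. Because $f_{C,C'}$ arises as the leading term of a minor of $\mathbf{M}+\mathrm{Id}$ (see \ref{2.6}), it is multilinear in the matrix entries; the identity $f_{C,C'}(e_{VS}+v)=f_{C,C'}(e+v)$ expands into a sum over nonempty subsets $S$ of the added coordinates of mixed partial evaluations $\partial^S f_{C,C'}|_{e+v}$. Noting that distinct bad VS pairs produce distinct added coordinates (from the uniqueness remark following Theorem~\ref{4.4.5}), the identity reduces by iteration to the single-coordinate vanishing
\[
\partial_{x_{j,l}}f_{C,C'}\big|_{e+V}\equiv 0
\]
for each added $x_{j,l}$.

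To establish this vanishing, I would exploit $P'$-invariance together with Theorem~\ref{4.4.5}. By that theorem the connecting element $x_{j,k}$ of the underlying bad VS pair lies in $\supp V$, so the unipotent one-parameter subgroup $\exp(tx_{j,k})\subset P'$ preserves $f_{C,C'}$. Expansion to first order in $t$ gives
\[
\exp(tx_{j,k})(e+v)=(e+v)+t\bigl(x_{j,l}-x_{i,k}+R+[x_{j,k},v]\bigr)+O(t^2),
\]
where $R$ collects commutators of $x_{j,k}$ with the remaining terms of $\supp e$. Differentiating the invariance relation at $t=0$ produces
\[
\partial_{x_{j,l}}f_{C,C'}\big|_{e+v}=\partial_{x_{i,k}}f_{C,C'}\big|_{e+v}-\bigl\langle\nabla f_{C,C'}|_{e+v},\,R+[x_{j,k},v]\bigr\rangle.
\]
The vanishing of the right-hand side is to be established from three inputs: Corollary~\ref{4.5.2}, which places $x_{j,l}$ in $\mathfrak u_{\pi'}$ and outside the excluded set $X$; a parallel invariance argument in the ``dual'' direction showing $\partial_{x_{i,k}}f_{C,C'}|_{e+V}\equiv 0$ (using that $x_{i,k}$ is blocked from $\mathfrak p.e+V$ precisely because $(i,j,k,l)$ is bad); and the combinatorial analysis of $\supp e$ from \ref{3.4}--\ref{3.6} together with the bad-pair structure of \ref{4.4}, which verifies that each root vector appearing in $R$ or $[x_{j,k},v]$ has partial derivative of $f_{C,C'}$ already vanishing on $e+V$.

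The main obstacle is the systematic control of these error commutators: $x_{j,k}$ may act non-trivially on several other entries of $\supp e$ and on components of $v\in V$, and the cancellations in the $P'$-invariance relation must be tracked uniformly across all such contributions. I anticipate this will require an induction along the columns of $\mathscr D$, following the setup of \ref{4.4.3} and invoking Lemmas~\ref{3.2.2} and~\ref{3.2.3} to localize the analysis one column at a time; the classification of where the added $x_{j,l}$ can appear in the support of $f_{C,C'}$ (which by the bounds of degree $\sum_{i=v}^{v'-1}\min(c_i,s)$ is quite restricted) should then reduce the remaining case analysis to a finite and manageable list.
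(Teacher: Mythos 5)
Your deduction of (ii) from (i) is fine and matches the paper's (which simply combines (i) with the fact, from \cite[5.4.12]{FJ2}, that $e+V$ is already a Weierstrass section). The problem is (i), where what you offer is a programme rather than a proof, and both of its reduction steps have genuine gaps. First, the reduction to single-coordinate vanishing is not valid as stated: since $f_{C,C'}$ is multi-affine, $f_{C,C'}(e+v+w)-f_{C,C'}(e+v)=\sum_{\emptyset\neq S}\partial^S f_{C,C'}|_{e+v}$, and knowing $\partial_{x_{j,l}}f_{C,C'}|_{e+v}=0$ for all $v\in V$ kills only the $|S|=1$ terms; the mixed partials with $|S|\geq 2$ would require the first partials to vanish at points $e+v+\sum c_\alpha x_{j_\alpha,l_\alpha}$ lying outside $e+V$, which your claim does not give. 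Second, and more seriously, the heart of the matter --- the vanishing of $\partial_{x_{i,k}}f_{C,C'}|_{e+V}$ and of the contributions of the error terms $R$ and $[x_{j,k},v]$ in your invariance identity --- is exactly what needs proof, and you leave it as ``to be established.'' Corollary \ref{4.5.2} will not supply it: that result concerns membership of $x_{j,l}$ in $\mathfrak u_{\pi'}$, i.e.\ a statement about the component $\mathscr N^e$, not about partial derivatives of $f_{C,C'}$ along $e+V$. There is no evident route to these vanishings that does not pass through the explicit monomial structure of $f_{C,C'}$, so the infinitesimal $P'$-invariance framework, while correctly set up, does not by itself close the argument.

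That monomial structure is precisely what the paper's proof uses, and it is much more direct: by \cite[4.2.5]{FJ1} each monomial of $f_{C,C'}$ corresponds to a complete disjoint union of composite lines between $C$ and $C'$; by \cite[5.4.11]{FJ2} exactly one such union can be formed from the lines defining $e+V$, and the only line carrying a $\ast$ occurring in it is the ungated line $\ell_{j,k}$ with right end point $C'\cap R_s$. The adjoined line $\ell_{j,l}$ of a VS quadruplet $(i,j,k,l)$ is the composition of $\ell_{j,k}$ with $\ell_{k,l}$, and since $k$ is an entry of $C'$, the box $b(l)$ lies strictly to the right of $C'$; hence $\ell_{j,l}$ cannot occur in any composite line between $C$ and $C'$, no new complete union appears, and the restriction of $f_{C,C'}$ is unchanged. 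To salvage your approach you would in effect have to re-derive this combinatorial uniqueness, so as it stands the proposal does not prove (i).
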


\begin {proof}

Recall \cite [4.2.5]{FJ1} that every monomial in $f_{C,C'}$ corresponds to a disjoint union of composite lines passing though \textit{every} element in $R^s$ between $C,C'$. We shall refer to this last property by saying the disjoint union is complete.

Recall that our construction in \cite [5.4.11]{FJ2} gives one particular complete disjoint union of composite lines, formed from the lines defining $e+V$.

Moreover in loc. cit. we also showed that there are no other possible complete disjoint union formed from the lines defining $e+V$.

\

\textbf{Observation}. To prove (i) it is obviously enough to show that the additional lines given by adjoining the lines $\ell_{j,l}$ for every VS quadruplet $(i,j,k,l)$ does not introduce a new complete disjoint union of composite lines.

\

Recall (Remark \ref {4.4.8}) that a line, say $\ell_{j,k}$, carrying a $\ast$ uniquely determines a VS quadruplet $(i,j,k,l)$ (if one exists). Moreover the line $\ell_{j,l}$ introduced by this quadruplet is uniquely the composition of the lines $\ell_{j,k},\ell_{k,l}$ \textit{and so} uniquely determined by $\ell_{j,k}$.

\

$(*)$. Thus there can be only new composite line $\mathscr L'$ which simply bypasses the box with entry $k$. Moreover the value it gives to the monomial, changes from $x_{j,k}$ to $1$.

\

Yet as we shall see below even this extra scalar factor cannot appear!

\

Recall that in \cite [5.4.11]{FJ2} that we proved that there is only one complete disjoint union of composite lines joining $C,C'$, formed from the lines defining $e+V$. As noted in loc cit, this was practically obvious if we forbid passage across gated lines (carrying a $\ast$), Now in this the only line, call it $\ell_{j,k}$, carrying a $\ast$ which is not gated, is exactly the unique line whose right hand end point is the box $C'\cap R_s$.

Moreover going leftwards from this box there is a unique composite line $\mathscr L$ reaching a box in $C$. The boxes through which $\mathscr L$ passes cannot, by definition, appear in the passage of a further composite line $\mathscr L'$ disjoint to $\mathscr L$. As note in \cite [5.4.11]{FJ2} this forbids passage across \textit{all} the lines carrying a $\ast$ gated at the $s-1$ stage. Then by downward induction (cf \cite [5.4.11]{FJ2}, one concludes that there is only one complete disjoint union of composite lines joining $C,C'$. Moreover amongst the set of individual lines in these composite lines, there is only one carrying a $\ast$ namely $\ell_{j,k}$.

We conclude that the (unique) complete disjoint union of composite lines determined by $e+V$ is unchanged, when we pass to $e_{VS}$ because no line carrying a $\ast$ is used in the first union except $\ell_{j,k}$ and this cannot be replaced by $\ell_{j,l}$ because $k$ is an entry of $C'$ and therefore $l$ is in a box strictly to the right of $C'$. Thus the potentially new composite line $\mathscr L'$ mentioned in $(*)$ above does not even exist. Hence (i).

(ii) follows from (i) and \cite [5.4.12]{FJ2}.

\end {proof}

\section{Figures}\label{6}

The following figures illustrate the assertions of \ref {3.4}, for different assignments of column heights of $S$. In each case three pictures are drawn. On the left (resp. right) of the top picture, we give the last (resp. last two) columns of $\textbf{M}(s)$ (resp. $\textbf{M}(s+1)$). In the middle picture, a composition realizing these assignments of height is given. In this $s$ is its last entry. The bottom picture describes the resulting matrix presentation of $\textbf{M}(s) \rightarrow \textbf{M}(s+1)$.


\subsection {Case $1$}\label{6.1}

\

\

\begin{center} \tiny
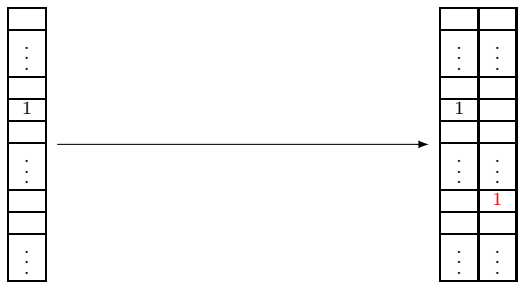
\begin{figure}[hbt!]
\begin{tikzpicture}[grow=right,
sloped,
edge from parent/.style={draw,-latex}, level distance=6cm, sibling distance=5cm]

\node{ $\left.\begin{array}{|c|}\hline \\\hline \vdots \\\hline \\\hline {1} \\\hline \\\hline \vdots \\\hline \\\hline \\\hline \vdots \\\hline \end{array}\right.$
}
child {
 node{ $\left.\begin{array}{|c|c|}\hline & \\\hline \vdots& \vdots \\\hline & \\\hline {1}& \\\hline & \\\hline \vdots&\vdots \\\hline & \red{1} \\\hline & \\\hline \vdots & \vdots \\\hline \end{array}\right.$}
edge from parent
node[below] { }};
\end{tikzpicture}\\
\caption{$S$ has a column of height $>s+2$ but none of height $s,s+1$. This illustrates the last part of $(iii)B$ }
\end{figure}
\end{center}
\textbf{Example:} Consider the parabolic defined by the composition $(3,1)$:
\begin{center}

\begin{tikzcd}[row sep=1.8 em,
column sep = 2 em]
1\arrow[-,r,"1"]&4\\
2&\\
3&\\
\end{tikzcd}
\;\begin{tikzpicture}

\draw [->,>=stealth] (0,.5) -- (2,.5);
\end{tikzpicture} \;
\begin{tikzcd}[row sep=1.8 em,
column sep = 2 em]
1\arrow[-,r,"1"]&4\\
2\arrow[-,r,"1", red]&5\\
3\\
\end{tikzcd}

\end{center}

\begin{center}
 \begin{tikzpicture}
 \matrix [matrix of math nodes,left delimiter=(,right delimiter=)] (m)
{
1 & 0 & 0& 1 \\
0 & 1 & 0& 0 \\
0 & 0 & 1& 0 \\
0 & 0 & 0& 1 \\
};

\draw[
opacity=0.5] (m-3-1.south west) rectangle (m-1-3.north east);
\draw[
fill=, opacity=0.5] (m-4-4.south west) rectangle (m-4-4.north east);

\end{tikzpicture}
\;\begin{tikzpicture}

\draw [->,>=stealth] (0,.5) -- (2,.5);
\end{tikzpicture} \;
 \begin{tikzpicture}
 \matrix [matrix of math nodes,left delimiter=(,right delimiter=)] (m)
{
1 & 0 & 0& 1 &0 \\
0 & 1 & 0& 0 &1 \\
0 & 0 & 1& 0 &0 \\
0 & 0 & 0& 1 &0 \\
0 & 0 & 0& 0 &1\\
};

\draw[
opacity=0.5] (m-3-1.south west) rectangle (m-1-3.north east);
\draw[
fill=red, opacity=0.5] (m-5-4.south west) rectangle (m-4-5.north east);
\end{tikzpicture}
\end{center}
\

\

\subsection {Case $2$}\label{6.2}

\begin{figure}[H]
\centering \tiny
\begin{tikzpicture}[grow=right,
sloped,
edge from parent/.style={draw,-latex}, level distance=6cm, sibling distance=5cm]

\node{ $\left.\begin{array}{|c|}\hline \\\hline \vdots \\\hline \\\hline {1} \\\hline \\\hline \vdots \\\hline \\\hline \vdots \\\hline \\\hline \vdots \\\hline \end{array}\right.$
}
child {
 node{ $\left.\begin{array}{|c|c|}\hline & \\\hline \vdots& \vdots \\\hline & \\\hline {1}& \\\hline & \\\hline \vdots&\vdots \\\hline & \red{\ast} \\\hline \vdots & \vdots \\\hline & \red{1} \\\hline \vdots&\vdots \\\hline \end{array}\right.$}
edge from parent
node[above] { }
node[below] { }};
\end{tikzpicture}\\
\caption{ $S$ has no column of height $s$, a column of height $s+1$ and a column of height $>s+1$. This illustrates the first part of $(iii)B$ and $(iv)$.}
\end{figure}
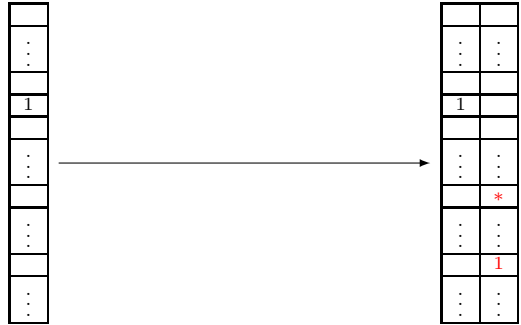

\textbf{Example:} Consider the parabolic defines by the composition $(2,3,1)$ to $(2,3,2)$:
\begin{figure}[hbt!]
 \centering
\begin{tikzcd}[row sep=1.8 em,
column sep = 2 em]
1\arrow[-,r,"1"]&3\arrow[-,r,"1"]&6\\
2\arrow[-,r,"1"]&4\\
&5\\
\end{tikzcd}
\;\begin{tikzpicture}

\draw [->,>=stealth] (0,.5) -- (2,.5);
\end{tikzpicture} \;
\begin{tikzcd}[row sep=1.8 em,
column sep = 2 em]
1\arrow[-,r,"1"]&3\arrow[-,r,"1"]&6\\
2\arrow[-,r,"1"]&4\arrow[-,r,"\ast",red]&7\\
&5\red{\arrow[-,ur,"1",red]}\\
\end{tikzcd}
\end{figure}

\begin{center}
 \begin{tikzpicture}
 \matrix [matrix of math nodes,left delimiter=(,right delimiter=)] (m)
{
1 & 0 & 1& 0& 0&0 \\
0 & 1 & 0& 1 &0&0 \\
0 & 0 & 1& 0 &0&1 \\
0 & 0 & 0& 1 &0&0\\
0 & 0 & 0& 0 &1&0\\
0 & 0 & 0& 0 &0&1\\
};

\draw[
opacity=0.5] (m-2-1.south west) rectangle (m-1-2.north east);
\draw[
fill=red, opacity=0.5] (m-5-3.south west) rectangle (m-3-5.north east);
\draw[
fill=, opacity=0.5] (m-6-6.south west) rectangle (m-6-6.north east);
\end{tikzpicture}
\;\begin{tikzpicture}

\draw [->,>=stealth] (0,.5) -- (2,.5);
\end{tikzpicture} \;
 \begin{tikzpicture}
 \matrix [matrix of math nodes,left delimiter=(,right delimiter=)] (m)
{
1 & 0 & 1& 0& 0&0&0 \\
0 & 1 & 0& 1 &0&0 &0\\
0 & 0 & 1& 0 &0&1 &0\\
0 & 0 & 0& 1 &0&0&\red{\ast}\\
0 & 0 & 0& 0 &1&0&\red{1}\\
0 & 0 & 0& 0 &0&1&0\\
0 & 0 & 0& 0 &0&0&1\\
};

\draw[
opacity=0.5] (m-2-1.south west) rectangle (m-1-2.north east);
\draw[
fill=red, opacity=0.5] (m-5-3.south west) rectangle (m-3-5.north east);
\draw[
 opacity=0.5] (m-7-6.south west) rectangle (m-6-7.north east);
\end{tikzpicture}

\end{center}

\

\subsection {Case $3$}\label{6.3}

\

\
\begin{figure}[H]
\centering \tiny
\begin{tikzpicture}[grow=right,
sloped,
edge from parent/.style={draw,-latex}, level distance=6cm, sibling distance=5cm]

\node{ $\left.\begin{array}{|c|}\hline \\\hline \vdots \\\hline \\\hline \vdots \\\hline 1 \\\hline \vdots \\\hline \ast \\\hline \vdots \\\hline \\\hline \vdots \\\hline \end{array}\right.$
}
child {
 node{ $\left.\begin{array}{|c|c|}\hline & \\\hline \vdots& \vdots \\\hline & \red{1} \\\hline\vdots & \vdots\\\hline & \red{\ast} \\\hline \vdots&\vdots \\\hline \red{1} & \\\hline \vdots & \vdots \\\hline & \\\hline \vdots&\vdots \\\hline \end{array}\right.$}
edge from parent
node[above] { }
node[below] { }};
\end{tikzpicture}\\
\caption{$S$ has a column of height $s$ and two columns of height $s+1$. This illustrates $(ii),(iii)A$ and $(iv)$.}
\end{figure}
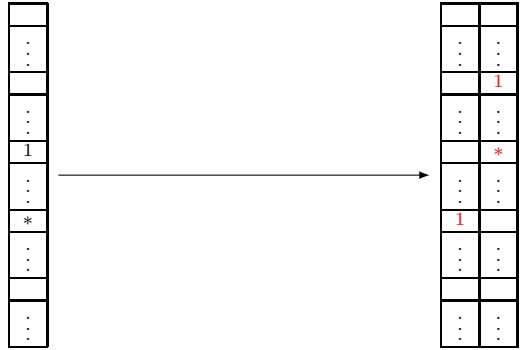

\textbf{Example:} Consider the parabolic defines by the composition $(2,3,2,1,1)$ to $(2,3,2,1,2)$:
\begin{figure}[hbt!]
\centering
\begin{tikzcd}[row sep=1.8 em,
column sep = 2 em]
1\arrow[-,r,"1"]&3\arrow[-,r,"1"]&6\arrow[-,r,"1"]&8\arrow[-,r,"\ast"]&9\\
2\arrow[-,r,"1"]&4\arrow[-,r,"\ast"]&7\arrow[-,urr,"1"]\\
&5\red{\arrow[-,ur,"1"]}\\
\end{tikzcd}
\;\begin{tikzpicture}

\draw [->,>=stealth] (0,.5) -- (2,.5);
\end{tikzpicture} \;
\begin{tikzcd}[row sep=1.8 em,
column sep = 2 em]
1\arrow[-,r,"1"]&3\arrow[-,r,"1"]&6\arrow[-,r,"1"]&8\arrow[-,r,"1",blue]&9\\
2\arrow[-,r,"1"]&4\arrow[-,r,"\ast"]\arrow[-,rrr,"1",red, bend right=30]&7\arrow[-,rr,"\ast",red]& &10\\
&5\red{\arrow[-,ur,"1"]}\\
\end{tikzcd}
\end{figure}
\begin{center}
\begin{figure}[H]
 \begin{tikzpicture}
 \matrix [matrix of math nodes,left delimiter=(,right delimiter=)] (m)
{
1 & 0 & 1 & 0 & 0 & 0 & 0 & 0 & 0 \\
0 & 1 & 0 & 1 & 0 & 0 & 0 & 0 & 0 \\
0 & 0 & 1 & 0 & 0 & 1 & 0 & 0 & 0 \\
0 & 0 & 0 & 1 & 0 & 0 & \ast & 0 & 0 \\
0 & 0 & 0 & 0 & 1 & 0 & 1 & 0 & 0 \\
0 & 0 & 0 & 0 & 0 & 1 & 0 & 1 & 0 \\
0 & 0 & 0 & 0 & 0 & 0 & 1 & 0 & 1 \\
0 & 0 & 0 & 0 & 0 & 0 & 0 & 1 & \ast \\
0 & 0 & 0 & 0 & 0 & 0 & 0 & 0 & 1 \\
};

\draw[
opacity=0.5] (m-2-1.south west) rectangle (m-1-2.north east);
\draw[
fill=red, opacity=0.5] (m-5-3.south west) rectangle (m-3-5.north east);
\draw[
 opacity=0.5] (m-7-6.south west) rectangle (m-6-7.north east);
 \draw[
 fill=, opacity=0.5] (m-8-8.south west) rectangle (m-8-8.north east);
 \draw[
 fill=, opacity=0.5] (m-9-9.south west) rectangle (m-9-9.north east);
\end{tikzpicture}
\;\begin{tikzpicture}

\draw [->,>=stealth] (0,.5) -- (2,.5);
\end{tikzpicture} \;
 \begin{tikzpicture}
 \matrix [matrix of math nodes,left delimiter=(,right delimiter=)] (m)
{
1 & 0 & 1 & 0 & 0 & 0 & 0 & 0 & 0 & 0 \\
0 & 1 & 0 & 1 & 0 & 0 & 0 & 0 & 0 & 0 \\
0 & 0 & 1 & 0 & 0 & 1 & 0 & 0 & 0 & 0 \\
0 & 0 & 0 & 1 & 0 & 0 & \ast & 0 & 0 & \red{1} \\
0 & 0 & 0 & 0 & 1 & 0 & 1 & 0 & 0 & 0 \\
0 & 0 & 0 & 0 & 0 & 1 & 0 & 1 & 0 & 0 \\
0 & 0 & 0 & 0 & 0 & 0 & 1 & 0 & 0 & \red{\ast} \\
0 & 0 & 0 & 0 & 0 & 0 & 0 & 1 & \blue{1} & 0 \\
0 & 0 & 0 & 0 & 0 & 0 & 0 & 0 & 1 & 0 \\
0 & 0 & 0 & 0 & 0 & 0 & 0 & 0 & 0 & 1 \\
};

\draw[
opacity=0.5] (m-2-1.south west) rectangle (m-1-2.north east);
\draw[
fill=red, opacity=0.5] (m-5-3.south west) rectangle (m-3-5.north east);
\draw[
 opacity=0.5] (m-7-6.south west) rectangle (m-6-7.north east);
 \draw[
 fill=, opacity=0.5] (m-8-8.south west) rectangle (m-8-8.north east);
 \draw[
 opacity=0.5] (m-10-9.south west) rectangle (m-9-10.north east);
\end{tikzpicture}
\end{figure}
\end{center}

The three following figures illustrate the VS quadruplet of the  examples of \ref {4.4.8}. \\
\textbf{Examples:}  $(i)$. Consider the parabolic defines by the composition $(1,2,1,1,2,2,3)$:
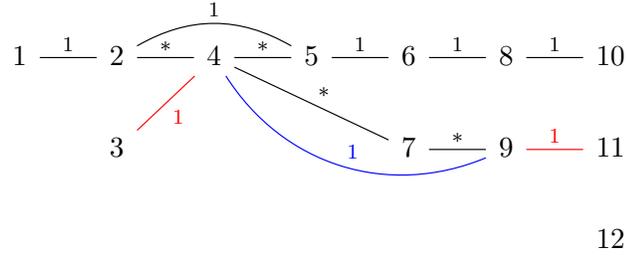
\begin{figure}[H]
\centering
\begin{tikzcd}[row sep=1.8 em,
column sep = 2 em]
1\arrow[-,r,"1"]&2\arrow[-,r,"*"]\arrow[-,rr,"1", bend left=30]&4\arrow[-,r,"*"]\arrow[-,ld,red,"1"]\arrow[-,rrrd,"1", bend right=40,blue]\arrow[-,rrd,"*"]&5\arrow[-,r,"1"]&6\arrow[-,r,"1"]&8\arrow[-,r,"1"]&10\\
 &3& & &7\arrow[-,r,"*"]&9\arrow[-,r,red,"1"]&11\\
 && & &&&12\\
\end{tikzcd}
\caption{The VS pair $(3, 4, 9, 11)$ in red is not bad thanks to the blue line $\ell_{4,9}$. }
\end{figure}

 $(ii)$.  Let the parabolic defines by the composition $(1,2,1,1,2,3)$:
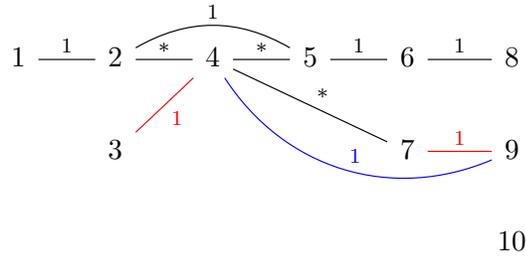
\begin{figure}[H]
\centering
\begin{tikzcd}[row sep=1.8 em,
column sep = 2 em]
1\arrow[-,r,"1"]&2\arrow[-,r,"*"]\arrow[-,rr,"1", bend left=30]&4\arrow[-,r,"*"]\arrow[-,ld,red,"1"]\arrow[-,rrrd,"1", bend right=40,blue]\arrow[-,rrd,"*"]&5\arrow[-,r,"1"]&6\arrow[-,r,"1"]&8\\
 &3& & &7\arrow[-,r,"1", red]&9\\
 && & &&10\\
\end{tikzcd}
\caption{the VS pair $(3, 4, 7, 9)$ in red is not bad. the line $\ell_{4,9}$ representing $x_{4,9}\in \supp e $ lies in the same column as $x_{4,9}$. }
\end{figure}

\

 $(iii)$.  Let the parabolic defines by the composition $(3,2,1,1,2,3)$:\\

\begin{figure}[H]
\centering
\begin{tikzcd}[row sep=1.8 em,
column sep = 2 em]
1\arrow[-,r,"1"]&4\arrow[-,r,"1",red]&6\arrow[-,r,"*"]\arrow[-,rrd,"*", bend right=0]\arrow[-,rrrd,"1", bend right=30,green]\arrow[-,rrrdd,"*", bend right=40]&7\arrow[-,r,"1"]&8\arrow[-,r,"1"]&10\\
2\arrow[-,r,"1"]&5\arrow[-,rru,"1"]& & &9\arrow[-,r,"1",red]&11\\
3\arrow[-,rrrru,"1"]&&&&&12\\
\end{tikzcd}
\caption{The VS pair $(3, 4, 7, 9)$ in red is bad, we augment the $e$ with the green line $\ell_{4,10}$. }\label{fig3}
\end{figure}
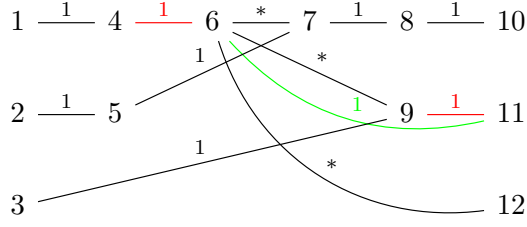

Recall \cite [4.2]{FJ2} that $e$ is given by the co-ordinate vectors defined by the lines with label $1$.
In figure \ref{fig3}, these form three disjoint lines $(1,4,6)$, $(2,5,7,8,10)$, $(3,9,11)$, so $e$ has nilpotency class is $(5,3,3,1)$ giving rise to column heights $(4,3,3,1,1)$. Thus $ \frac{1}{2}\dim \vert G\cdot e =\dim \mathfrak{n} -\frac{1}{2}(4\times3 + 3\times2 + 3\times2)=\dim \mathfrak{n} -12$, whilst $\dim \mathfrak{m}= \dim \mathfrak{n}-8$. Consequently $\frac{1}{2} \dim G\cdot e = \dim P\cdot e = \dim \mathfrak{m}- 4$. Yet $e\in \mathscr N$ is regular if and only if $\dim P\cdot e = \dim \mathfrak{m}- \dim V =\dim \mathfrak{m}- 3$.\\

The lines in red represent the bad VS pair $(3, 4, 7, 9)$ in figure \ref{fig3}.
By \ref {4.4.4} we must adjoin the green line, that is $\ell_{6,11}$, giving $e_{VS}=e+x_{6,11}$, which by Prop. \ref {4.5.3} lies in $\mathscr N^e$. The nilpotency class of $e_{VS}$ is $(5,4,2,1)$ giving rise to column heights $(4,3,2,2,1)$. Thus $ \frac{1}{2} \dim G\cdot e_{VS} =\dim \mathfrak{n}-11$ and $ \frac{1}{2} \dim G\cdot e_{VS} = \dim P\cdot e_{VS} = \dim \mathfrak{m} - 3=\dim \mathscr N^e$, the last equality by Prop. \ref {4.5.3}.  Thus $Pe_{VS} $ is dense orbit in $\mathscr N^e$ and so $e_{VS}$ is a regular element in $\mathscr N$.

 \section {Index of Notation }\label {7}

 Symbols used frequently are given below in the order in which they appear.

 \

 \ref {1}. \quad \ $\mathbb C, [1,n]$.

 \ref {1.1}. \ $G,\mathfrak g, \mathfrak h, H, \Delta, \Delta ^+, \pi, \mathfrak n, \mathfrak n^-, \mathfrak b, W, s _\alpha, \alpha^\vee$.

 \ref {1.2}. \ $\mathfrak r_{\pi'}, \mathfrak p_{\pi'},\mathfrak m_{\pi'}, P, P', \mathfrak p'$.

 \ref {1.3}. \ $\mathscr N, e+V, \mathscr N^e$.

 \ref {1.4}. \ $\mathscr N_{reg}, e_{VS}, E_{VS}$.

 \ref {1.6}. \ $B.\mathfrak u, d$.

 \ref {2.1}. \ $\textbf{M}_n, x_{i,j}, \alpha_{i,j}$.

 \ref {2.2}. \ $W_{\pi'},w_{\pi'},c_i, B_i, \textbf{C}_i$.

 \ref {2.3}. \ $\mathscr D_\mathfrak m, R_i,R^i, C_i, \mathscr T_\mathfrak m, \ell_{b,b'}, \ell_{i,j}$.

 \ref {2.4} \ $M_{v,v'}, \mathfrak u_{v,v'}, w_{u,u'}$.



 \ref {2.6}. \ $M_s(v,v'), \mathscr T^t, M_s, M_s(\mathfrak m)$.

 \ref {2.7}. \ $\mathscr P$.

 \ref {3.2.1}. $\mathscr C, \ell(\mathscr C)$.

 \ref {3.3}. \ $b_{i,j}$.

 \ref {3.3.1}. $\textbf{C}(s), \textbf{M}(s),s, \textbf{S}, \textbf{M}^k,e^\kappa,V^\kappa$.

 \ref {3.3.2}. $\textbf{r}_{C,C'}$.

 \ref {3.4}. \ $\textbf{C}^t$.

 \ref {3.4.2}. $\mathscr R, |S|$.

 \ref {3.6.8}. $\mathscr S$.

 \ref {4.4.2}. $C<C', b(t), C_{(t)},c_{(t)}$.

 \ref {4.4.4}. $e^\kappa_{VS},E^\kappa_{VS}$.

 \

\textbf{Acknowledgements.}

\

This work was the subject of a joint talk on Zoom hosted by the Weizmann Institute held during 25 April to 6 May 2021, on the occasion of the $60^{th}$ birthday of Vera Serganova. We would like to thank the organizers (Inna Entova-Aizenbud, Maria Gorelik and Dimitar Grantcharov) for the invitation to speak. Our talk may be accessed on

https://innaentova.wixsite.com/springfest2021/recording-links,
using the password $362880$, which is $9!$.

The first author was supported at the Weizmann Institute, arranged by Ronen Basri and Gal Binyamini on ISRAEL SCIENCE FOUNDATION (grant No. 1167/17) and the MINERVA Stiftung with the funds from the BMBF of the Federal Republic of Germany.

During the academic year $2021-2022$ the first author was supported through ISF grant 1017/19 as a postdoctoral student of Anna Melnikov at the University of Haifa.

\

\end{document}